\DeclareFontFamily{OMX}{MnSymbolE}{}
\DeclareSymbolFont{MnLargeSymbols}{OMX}{MnSymbolE}{m}{n}
\DeclareFontShape{OMX}{MnSymbolE}{m}{n}{
    <-6>  MnSymbolE5
   <6-7>  MnSymbolE6
   <7-8>  MnSymbolE7
   <8-9>  MnSymbolE8
   <9-10> MnSymbolE9
  <10-12> MnSymbolE10
  <12->   MnSymbolE12
}{}
\DeclareFontShape{OMX}{MnSymbolE}{b}{n}{
    <-6>  MnSymbolE-Bold5
   <6-7>  MnSymbolE-Bold6
   <7-8>  MnSymbolE-Bold7
   <8-9>  MnSymbolE-Bold8
   <9-10> MnSymbolE-Bold9
  <10-12> MnSymbolE-Bold10
  <12->   MnSymbolE-Bold12
}{}
\let\llangle\@undefined
\let\rrangle\@undefined
\DeclareMathDelimiter{\llangle}{\mathopen}%
                     {MnLargeSymbols}{'164}{MnLargeSymbols}{'164}
\DeclareMathDelimiter{\rrangle}{\mathclose}%
                     {MnLargeSymbols}{'171}{MnLargeSymbols}{'171}
\newtheorem{theorem}{Theorem}[section]
\newtheorem{proposition}[theorem]{Proposition}
\newtheorem{lemma}[theorem]{Lemma}
\newtheorem{corollary}[theorem]{Corollary}
\theoremstyle{definition}
\newtheorem{definition}[theorem]{Definition}
\theoremstyle{remark}
\newtheorem{remark}[theorem]{Remark}
\newtheorem{example} [theorem]{Example}
\newcommand{\N}{\mathbb{N}}
\newcommand{\Z}{\mathbb{Z}}
\newcommand{\ud}{\mathrm{d}}
\newcommand{\R}{\mathbb{R}}
\def\C{\mathbb{C}}
\newcommand{\bA}{{\mathbf{A}}}
\newcommand{\bB}{{\mathbf{B}}}
\newcommand{\bC}{{\mathbf{C}}}
\newcommand{\bD}{{\mathbf{D}}}
\newcommand{\bF}{{\mathbf{F}}}
\newcommand{\bI}{{\mathbf{I}}}
\newcommand{\bJ}{{\mathbf{J}}}
\newcommand{\bK}{{\mathbf{K}}}
\newcommand{\bL}{{\mathbf{L}}}
\newcommand{\bP}{{\mathbf{P}}}
\newcommand{\bR}{{\mathbf{R}}}
\newcommand{\bS}{{\mathbf{S}}}
\newcommand{\bT}{{\mathbf{T}}}
\newcommand{\bY}{{\mathbf{Y}}}
\newcommand{\bZ}{{\mathbf{Z}}}
\newcommand{\bAK}{{\bA_\bK}}
\newcommand{\fA}{{\bA_c}} 
\newcommand{\fB}{{\bB_c}} 
\newcommand{\fC}{{\bC_c}} 
\newcommand{\fD}{{\bD_c}} 
\newcommand{\fK}{{\bK_c}} 
\newcommand{\cC}{\mathcal{C}}
\newcommand{\cD}{\mathcal{D}}
\newcommand{\cH}{\mathcal{H}}
\newcommand{\cK}{\mathcal{K}}
\newcommand{\cB}{\mathcal{B}}
\newcommand{\e}{\varepsilon}
\newcommand{\coleq}{\coloneqq}
\newcommand{\eqcol}{\eqqcolon}
\newcommand{\Abso}[1]{{\left\lvert#1\right\rvert}}
\newcommand{\abso}[1]{{\lvert#1\rvert}}
\newcommand{\norm}[1]{{\lVert#1\rVert}}
\newcommand{\Norm}[1]{{\left\lVert#1\right\rVert}}
\newcommand{\todo}[1]{{\color{red}\P #1 \P}}
\DeclareMathOperator{\diag}{diag}
\DeclareMathOperator{\Span}{span}
\let\Re\relax
\DeclareMathOperator{\Re}{Re}
\DeclareMathOperator{\im}{im}
\DeclareMathOperator{\meas}{meas}
\DeclareMathOperator{\tridiag}{tridiag}
\DeclareMathOperator{\trace}{Tr}
\newcommand{\bv}{{\mathbf v}}
\newcommand{\bw}{{\mathbf w}}
\newcommand{\Bm}{{\mathbf m}}
\newcommand{\bn}{{\mathbf n}}
\newcommand{\bx}{{\mathbf x}}
\newcommand{\Cnn}{\C^{n\times n}}
\newcommand{\dd}[1][x]{\,\operatorname{d}\!#1}
\newcommand{\ddt}{\frac{\dd[]}{\dd[t]}}
\newcommand{\ip}[2]{\langle {#1}, {#2} \rangle}
\newcommand{\mHC}{{m_{HC}}}
\newcommand{\mC}{{\mathbf{C}}}
\newcommand{\mCH}{\mC_H} 
\newcommand{\mCS}{\mC_S} 
\newcommand{\mI}{{\mathbf{I}}}
\newcommand{\mJ}{{\mathbf{J}}}
\newcommand{\mP}{{\mathbf{P}}}
\newcommand{\mQ}{{\mathbf{Q}}}
\newcommand{\mR}{{\mathbf{R}}}
\newcommand{\mU}{{\mathbf{U}}}
\newcommand{\mV}{{\mathbf{V}}}
\newcommand{\mW}{{\mathbf{W}}}
\newcommand{\tbC}{\widetilde{\bC}}
\newcommand{\tm}{\widetilde{m}}
\newcommand{\makegray}[1]{{\color{gray} #1}}
\newcommand{\makeorange}[1]{{\color{orange} #1}}
\newcommand{\makered}[1]{#1} 
\newcommand{\maketeal}[1]{{\color{teal} #1}}
\newcommand{\bigO}{{\mathcal{O}}}
\newcommand{\sphere}{\mathbb{S}}
\newcommand{\torus}{\mathbb{T}}
\newcommand{\range}{\im}
\begin{document}

\title{Hypocoercivity in Hilbert spaces}
\author{F.~Achleitner\thanks{Vienna University of Technology, Institute of Analysis and Scientific Computing, Wiedner Hauptstr. 8-10, A-1040 Wien, Austria, franz.achleitner@tuwien.ac.at},
A.~Arnold\thanks{Vienna University of Technology, Institute of Analysis and Scientific Computing, Wiedner Hauptstr. 8-10, A-1040 Wien, Austria, anton.arnold@tuwien.ac.at} \thanks{corresponding author},
V.~Mehrmann\thanks{Technische Universit\"at Berlin, Institut f.~Mathematik,  MA 4-5, Stra\ss{}e des 17.~Juni 136, D-10623 Berlin, mehrmann@math.tu-berlin.de}, 
and E.A.~Nigsch\thanks{Vienna University of Technology, Institute of Analysis and Scientific Computing, Wiedner Hauptstr. 8-10, A-1040 Wien, Austria, eduard.nigsch@tuwien.ac.at}
} 

\date{\today}

\maketitle

\begin{abstract}
The concept of hypocoercivity for linear evolution equations with dissipation is discussed and equivalent characterizations that were  developed  for the finite-dimensional case are extended to separable Hilbert spaces. Using the concept of a hypocoercivity index,  quantitative estimates on the short-time and long-time decay behavior of a hypocoercive system are derived. As a useful tool for analyzing  the structural properties, an infinite-dimensional staircase form is also derived and connections to linear systems and control theory are presented. Several examples  illustrate  the new concepts and the  results are applied to the Lorentz kinetic equation.
\end{abstract}

{{\bf Keywords}: hypocoercivity (index), dissipative system, evolution equation, decay rate, staircase form}

{{\bf AMS Subject classification:} 37L15,  37L05, 47D06, 35E05}
\section{Introduction}

The characterization of the decay behavior of linear dynamical systems is  a classical topic. Recently, this topic has received a boost by the introduction of the concept of hypocoercivity in \cite{Villani} and its subsequent systematic study for systems of linear finite-dimensional ordinary and differential-algebraic equations \cite{AAC2,AAM21,AAM22} as well as for some models of partial differential equations~\makered{\cite{He06,DeSa09,GaMi13,DoMoSc15,AAC16,BDMMS20,ADSW21,AAM22Oseen}}.

In this paper, our aim is to study the short- and long-time behavior of linear evolution equations
\begin{equation}\label{lineveq}
\left\{
 \begin{aligned}
 \dot x(t) &= \bA x(t) = -\bC x(t), \\
 x(0) &= x_0 \in \cH 
\end{aligned}
\right.
\end{equation}
with $x\colon [0, \infty) \to \cH$, where $\bA = -\bC$ is a bounded operator on a separable Hilbert space $\cH$.

\makered{While the long-time analysis and in particular the exponential decay of solutions of dynamical systems is a classical topic, the short-time behavior of dissipative systems has only recently attracted more attention \cite{GaMi13, AAC2}. To understand and modify the short-time (or transient behavior) is a classical topic in control theory, see e.g. \cite{CurtainZwart,IlcRT07,Isi85}. A typical infinite dimensional application are hypocoercive Fokker-Planck equations (with a similar setting like the Lorentz equation studied in \S\ref{sec:lorentz}). These models are e.g. used to model the lay-down process of fibers in the fleece production \cite{Klar2019}. For the quality of this industrial process the convergence and mixing properties on a \emph{finite initial time interval} are crucial. 
}

In the finite-dimensional case, i.e., when $\bA$ is a matrix, a useful and by now quite well-studied notion is its \emph{hypocoercivity index} (see~\cite{AAC, AAC2, AAM22}), which characterizes not only certain algebraic properties of the system \eqref{lineveq} but also can be used to obtain quantitative information about its initial and long-time decay behavior. In this paper, we will extend the study of the hypocoercivity index and its use in the analysis of the long- and short-time behavior to linear operators on infinite-dimensional Hilbert spaces. As is to be expected, several modifications are necessary for the infinite-dimensional setting. Before we can start discussing these, we introduce our basic notation.

Throughout, $\cH$ will denote a Hilbert space, which we will always assume to be separable. We denote  by $\cB(\cH)$ the space of all bounded linear operators on~$\cH$, with the usual understanding that the domain is the full space $\cH$.  
For $\bA \in \cB(\cH)$ we denote by $\bA^*$ its Hilbert space adjoint. A self-adjoint 
operator $\bA$ is called \emph{nonnegative} (\emph{nonpositive}) if $\langle \bA x, x \rangle \ge 0$ ($\langle \bA x, x \rangle \le 0$) for all $x \in \cH$, which is denoted by $\bA\geq0$ ($\bA\leq0$), and \emph{positive} if $\langle \bA x, x \rangle > 0$ for all $\cH \setminus \{0\}$, which is denoted by $\bA>0$. $\bA$ is called \emph{dissipative} if $\Re \langle \bA x, x \rangle \le 0$ for all $x \in \cH$, which is the case if and only if $\langle \bA x, x \rangle + \langle x, \bA x \rangle \le 0$, i.e., if the self-adjoint (Hermitian) part $\bA_H \coleq (\bA + \bA^*)/2$ of $\bA$ is nonpositive. $\bA$ is called \emph{accretive} if $-\bA$ is dissipative. The unique nonnegative square root of a nonnegative self-adjoint operator $\bR$ is denoted by $\sqrt{\bR}$. For the definition of the direct sum of Hilbert spaces we refer to \cite{DS,Conway}.

We consider  operators $\bC$ which decompose as $\bC = \bR - \bJ$, where the operators
\[ 
\bR = \bC_H = \frac{1}{2}(\bC + \bC^*) \textrm{ resp.\ } -\bJ = \bC_S = \frac{1}{2} ( \bC - \bC^*) 
\]
have the same domains and form the self-adjoint (Hermitian) and skew-adjoint (\makered{skew}-Hermitian) part of $\bC$, respectively. If $\bC$ is bounded, then the domains of $\bC_H$ and $\bC_S$ are trivially identical, but
this assumption will also allow us to generalize our  setup to the case of unbounded $\bC$.
\makered{In some situations we prefer to use $\bR$ and $\bJ$ instead of $\bC_H$ and $-\bC_S$ to improve readability of complicated expressions. This is the common notation used for dissipative Hamiltonian systems, see e.g. \cite{JacZ12,SchJ14}.}

For linear evolution equations~\eqref{lineveq} with dissipative operator~$\bA\in\cB(\cH)$, we analyze the short-time behavior 
using the concept of \emph{hypocoercivity}. 
This concept was introduced in \cite{Villani} for the study of evolution equations of this form 
(mostly partial differential equations as well as integro-differential equations like the Boltzmann equation), for which the (possibly unbounded) dissipative operator~$\bA=-\mC$ generates a uniformly exponentially stable $C_0$-semigroup $(e^{-\mC t })_{t\geq 0}$; see e.g.~\cite[Section V.1, Eq.~(1.9)]{EngelNagel2000}.


The original definition of hypocoercivity from \cite{Villani} is as follows:

\begin{definition}\label{def:hypoco}
Let $\bL$ be an (unbounded) operator on a separable Hilbert space $\cH$ generating a strongly continuous semigroup $(e^{-t\bL})_{t \ge 0}$, and let $\widetilde \cH$ be a Hilbert space continuously and densely embedded in $(\ker \bL)^\perp$, endowed with a Hilbertian norm $\norm{\cdot}_{\widetilde \cH}$. The operator $\bL$ is said to be \emph{hypocoercive} on $\widetilde \cH$ if there exists a finite constant $C$ and some $\lambda>0$ such that
\[ \forall h_0 \in \widetilde \cH,\quad \forall t \ge 0:\quad \norm{e^{-t\bL} h_0}_{\widetilde \cH} \le Ce^{-\lambda t}\norm{h_0}_{\widetilde \cH}. \]
\end{definition}

In this article we consider mainly bounded operators $\bL$ with $\ker \bL=\{0\}$. Hence, we typically choose $\widetilde \cH=\cH$, except for the example in Section \ref{sec:lorentz}.

In Definition \ref{def:hypoco} hypocoercivity is characterized in terms of the exponential decay of the semigroup. So, in most of the literature on hypocoercivity, the main goal is the construction of a Lyapunov functional to prove exponential decay of the solution with a realistic rate $\lambda$. That functional (often a perturbation of $\norm{\cdot}_{\widetilde \cH}$ or a natural \emph{energy}) is typically adapted to each problem and application. By contrast, here we assume that the functional setting (i.e., $\widetilde \cH$, $\norm{\cdot}_{\widetilde \cH}$) is given, and we are interested in the short-time behavior of $\norm{x(t)}_{\widetilde \cH}$ and its relation to the hypocoercivity structure of the generator. 

To this end we study equivalent hypocoercivity definitions in terms of the generator in the next section. 
%
Extending from the finite-dimensional setting (as given in \cite{AAC}) to infinite-dimensional Hilbert spaces, it will become necessary to sharpen the conditions known from the matrix case. For this we adapt concepts that were introduced in the context of control theory for (approximate resp. exact) controllability and stabilizability, see \cite{CurtainZwart}, 
to the context of hypocoercivity. See Appendix~\ref{ssec:control} for a detailed discussion of the interrelation of the different concepts.

The paper is organized as follows:    In Section \ref{sec:HC}, we will discuss equivalent conditions for hypocoercivity in the finite-dimensional case and study how they can be extended to arbitrary Hilbert spaces. In Section \ref{sec_hcindex}, these conditions are used to generalize the notion of hypocoercivity index to this setting. In Section \ref{sec_decay}, we will show how the hypocoercivity index leads to quantitative estimates on the short-time and long-time behavior of a hypocoercive system. In Section \ref{sec_staircase}, we establish a staircase form, a useful tool for displaying the structural properties of our systems. In Section \ref{sec:lorentz}, we apply our results to the Lorentz kinetic equation, \makered{where the technical proof of Lemma~\ref{lem1}(c) is deferred to Appendix~\ref{sec:Proof_of_part_c}}. In Section \ref{sec_furtherexamples}, we study some toy examples to further illustrate some finer points of our notions. Finally, in Appendices \ref{sec:appendix} and \ref{ssec:control} we note some further equivalent conditions and connections to linear systems and control theory, respectively.

In particular, our main results (in \S3--\S4) are:
\begin{enumerate}
    \item For bounded accretive generators~$\bC$ we characterize hypocoercivity in terms of a coercivity-type estimate for (the skew-/self-adjoint parts of)~$\bC$: Lemma \ref{lem:Op-Equivalence}, Theorem \ref{cor:exp-stable}.
    \item This coercivity-type condition gives the natural definition of the hypocoercivity index of $\bC$, and it is always finite: Definition \ref{def:op:HC-index}, Proposition \ref{prop:fin-index}.
    \item The hypocoercivity index of~$\bC$ characterizes the (polynomial) short-time decay of the propagator norm~$\|e^{-\bC t}\|$: Theorem \ref{thm:short-t-decay:sandwich}.
\end{enumerate}


\section{Conditions for Hypocoercivity}\label{sec:HC}
In this section we review a number of equivalent conditions that have been derived for hypocoercivity in the \emph{finite-dimensional} case \cite[Proposition 1]{AAC} and extend their equivalence to the infinite dimensional Hilbert space setting.

In the following we use \emph{invariant subspaces} of operators, which are typically assumed to be closed (cf.~\cite[Section 12.27]{rudin} and \cite[Section 2.4]{CurtainZwart}). Despite this convention, to avoid confusion we will use the terminology \emph{closed invariant subspace}.

\begin{proposition}\label{propi}
Let $\bJ \in \cB(\cH)$ be skew-adjoint and $\bR \in \cB(\cH)$ be self-adjoint with $\bR \ge 0$ and let $\dim \ker \bR < \infty$. Then the following conditions are equivalent:

\begin{enumerate}[label=(B\arabic*)]
\item\label{p1} There exists $m \in \N_0$ 
such that
\begin{subequations} 
\begin{equation} \label{p1a}
 \overline{\Span}( \bigcup_{j=0}^m \im ( \bJ^j \sqrt{\bR})) = \mathcal{H}.
\end{equation}
\item\label{p1c}
There exists $m \in \N_0$ such that
\begin{equation} \label{p1b}
 \bigcap_{j=0}^m \ker ( \sqrt{\bR} (\bJ^*)^j) = \{ 0 \}. 
\end{equation}
\item\label{p2} The operators $\bJ$ and $\bR$ satisfy
\[ \exists m \in \N_0\ \forall x \in \cH\setminus\{0\} \ \exists j=j(x) \in\{0,..., m\}:  \, 
\langle \bJ^j \bR (\bJ^*)^j x, x \rangle > 0. \]
\end{subequations}
This means that $\sum_{j=0}^m \bJ^j \bR (\bJ^*)^j > 0$.
\item\label{p3} If a closed subspace $V$ of $\ker \bR$ is invariant under $\bJ$ then $V = \{0\}$.
\item\label{p4} No eigenvector of $\bJ$ lies in the kernel of $\bR$.
\end{enumerate}
If one of \ref{p1}--\ref{p2} holds for a particular $m$, all of them hold for the same $m$. Moreover, these conditions then also hold for all $m' > m$, and the minimum of all $m$ such that \ref{p1}--\ref{p2} hold is less than or equal to $\dim \ker \bR$.
\end{proposition}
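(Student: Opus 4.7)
My plan is to establish \ref{p1}$\Leftrightarrow$\ref{p1c}$\Leftrightarrow$\ref{p2} for each fixed $m \in \N_0$, then show \ref{p2}$\Rightarrow$\ref{p3}$\Leftrightarrow$\ref{p4}, and finally close the cycle via \ref{p3}$\Rightarrow$\ref{p1c} using the finite-dimensionality of $\ker\bR$.

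For the equivalence \ref{p1}$\Leftrightarrow$\ref{p1c}, I would use the standard Hilbert space identity $\overline{\im T} = (\ker T^*)^\perp$ applied to $T = \bJ^j\sqrt{\bR}$. Since $\sqrt{\bR}$ is self-adjoint and $(\bJ^j)^* = (\bJ^*)^j$, the adjoint is $\sqrt{\bR}(\bJ^*)^j$, and taking orthogonal complements converts a closed sum of ranges into an intersection of kernels. For \ref{p1c}$\Leftrightarrow$\ref{p2}, the key calculation is
\[
\Big\langle \sum_{j=0}^m \bJ^j \bR (\bJ^*)^j x,\, x \Big\rangle = \sum_{j=0}^m \bigl\|\sqrt{\bR}(\bJ^*)^j x\bigr\|^2,
\]
which vanishes precisely when $x \in \bigcap_{j=0}^m \ker(\sqrt{\bR}(\bJ^*)^j)$; positivity for all nonzero $x$ is thus equivalent to the intersection being trivial.

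For \ref{p2}$\Rightarrow$\ref{p3}, I would observe that $\bJ^*=-\bJ$ so $\bJ$- and $\bJ^*$-invariance coincide; then for any nonzero $x\in V\subseteq\ker\bR = \ker\sqrt{\bR}$, every $(\bJ^*)^j x\in V\subseteq\ker\sqrt{\bR}$, making all summands above zero and contradicting \ref{p2}. The equivalence \ref{p3}$\Leftrightarrow$\ref{p4} uses the hypothesis $\dim\ker\bR<\infty$: one direction is trivial (the span of an eigenvector is a $1$-dimensional closed invariant subspace); conversely, any nonzero closed $\bJ$-invariant $V\subseteq\ker\bR$ is finite-dimensional, so $\bJ|_V$ is a linear endomorphism of a nonzero finite-dimensional complex space and hence has an eigenvector inside $\ker\bR$.

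The step I expect to take the most care is \ref{p3}$\Rightarrow$\ref{p1c} together with the bound $m\le\dim\ker\bR$. I would set
\[
V_k \coleq \bigcap_{j=0}^k \ker\bigl(\sqrt{\bR}(\bJ^*)^j\bigr) = \{x \in \cH : (\bJ^*)^j x \in \ker\bR \text{ for } j=0,\dots,k\},
\]
giving a nested decreasing chain $V_0 \supseteq V_1 \supseteq \cdots$ of subspaces of $V_0=\ker\bR$, hence all finite-dimensional. A short check shows that once $V_k = V_{k+1}$, applying $\bJ^*$ to $x\in V_{k+1}$ lands in $V_k=V_{k+1}$, which by induction forces $V_m = V_k$ for all $m\ge k$. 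Therefore the chain stabilizes after at most $d \coleq \dim\ker\bR$ strict drops, i.e.\ $V_d = \bigcap_{m\ge 0} V_m \eqcol V_\infty$. By construction $V_\infty$ is $\bJ^*$-invariant (and hence $\bJ$-invariant), closed (as an intersection of kernels of bounded operators), and contained in $\ker\bR$; by \ref{p3} we conclude $V_\infty = \{0\}$, so \ref{p1c} holds with $m=d$.

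The remaining monotonicity claim (that if \ref{p1c} holds for some $m$ then it holds for every $m' > m$) is immediate since the intersection only shrinks as more factors are included, and the common-$m$ statement follows from the fixed-$m$ equivalences already established.
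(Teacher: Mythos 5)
Your proof is correct. Most of the steps — \ref{p1}$\Leftrightarrow$\ref{p1c} via $\overline{\im T}=(\ker T^*)^\perp$, \ref{p1c}$\Leftrightarrow$\ref{p2} via $\sum_j\|\sqrt{\bR}(\bJ^*)^jx\|^2$, \ref{p2}$\Rightarrow$\ref{p3}, and \ref{p3}$\Leftrightarrow$\ref{p4} via an eigenvector of the finite-dimensional restriction $\bJ|_V$ — coincide with the paper's argument. The one place where you take a genuinely different route is the closing step. The paper proves \ref{p3}$\Rightarrow$\ref{p2}: it takes a vector $x$ with $\bJ^jx\in\ker\bR$ for $j=0,\dots,\dim\ker\bR$, observes by dimension counting that some $\bJ^qx$ is a linear combination of $x,\dots,\bJ^{q-1}x$, and concludes that $\Span\{x,\dots,\bJ^{q-1}x\}$ is a nonzero closed $\bJ$-invariant subspace of $\ker\bR$, contradicting \ref{p3} unless $x=0$. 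You instead prove \ref{p3}$\Rightarrow$\ref{p1c} by following the descending chain $V_k=\bigcap_{j\le k}\ker(\sqrt{\bR}(\bJ^*)^j)\subseteq\ker\bR$, showing it stabilizes within $d=\dim\ker\bR$ steps at a closed $\bJ$-invariant subspace $V_\infty\subseteq\ker\bR$, which \ref{p3} forces to be trivial. Both arguments exploit finite-dimensionality and yield the same bound $m\le\dim\ker\bR$, but yours works ``globally'' with a nested chain of kernel intersections rather than ``pointwise'' with orbits of individual vectors; this is the Wong-sequence style of argument, which the paper actually invokes later when constructing the staircase form in Section~5, and it dovetails nicely with how Proposition~\ref{propii} is proved for the infinite-dimensional kernel case.
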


\begin{proof}
We assume $\dim\ker\bR \ge 1$, as otherwise the equivalence of \ref{p1}--\ref{p4} is trivial. 
For the equivalence of \ref{p1} and \ref{p1c}, note that a subspace of $\cH$ is dense if and only if its orthogonal complement is the trivial subspace $\{0\}$. Hence, \ref{p1} is equivalent to
\begin{align*}
\{0\} &= \left( \Span \left( \bigcup_{j=0}^m \im ( \bJ^j \sqrt{\bR} ) \right) \right)^\perp = \left( \bigcup_{j=0}^m \im ( \bJ^j \sqrt{\bR} ) \right)^\perp \\
&= \bigcap_{j=0}^m \left( \im ( \bJ^j \sqrt{\bR} ) \right)^\perp = \bigcap_{j=0}^m \ker ( \sqrt{\bR} (\bJ^*)^j).
\end{align*}
For \ref{p1c} $\Leftrightarrow$ \ref{p2} we note that for $x \in \cH$,
\[ \sqrt{\bR} (\bJ^*)^j x = 0 \Longleftrightarrow \norm{\sqrt{\bR} (\bJ^*)^j x}^2 = \langle \sqrt{\bR} (\bJ^*)^j x, \sqrt{\bR} (\bJ^*)^j x \rangle = \langle \bJ^j \bR (\bJ^*)^j x, x \rangle = 0. \]
Hence, if $\bigcap_{j=0}^m \ker ( \sqrt{\bR} (\bJ^*)^j) = \{0\}$ holds for some $m \in \N_0$  
and we suppose that $\forall j \le m$ 
$\langle \bJ^j \bR (\bJ^*)^j x, x \rangle = 0$, then $x=0$ by assumption. Conversely, if \ref{p2} holds with some $m \in \N_0$ and $\sqrt{\bR} (\bJ^*)^j x = 0$ for all $j \le m$ then $x=0$ follows.
This proof also shows that the smallest value for $m$ is equal in \ref{p1}, \ref{p1c}, and \ref{p2}. 

For \ref{p2} $\Rightarrow$ \ref{p3}, if $0 \ne x \in \ker \bR$ then for some $j \in \N_0$, $\sqrt{\bR} (\bJ^*)^j x \ne 0$, so $\bJ^j x = (-1)^j (\bJ^*)^j x \not\in \ker \sqrt{\bR} = \ker \bR$. 
Now let this $x\in V\subset \ker \bR$. But then $V$ cannot be $\bJ$-invariant.

For \ref{p3} $\Rightarrow$ \ref{p2}, we show the claim for $m = \dim \ker \bR$, which also shows that the smallest value for $m$ is less than or equal to $\dim \ker \bR$. Suppose $x \in \cH$ and $\langle \bJ^j \bR (\bJ^*)^j x, x \rangle = 0$, i.e., $\sqrt{\bR} (\bJ^*)^j x = 0$ and hence $\bJ^j x \in \ker \bR$ $\forall j=0,\dotsc,m$. As $\dim \ker \bR = m$ there exists $q \in \{1, \dotsc, m\}$ such that $\bJ^q x \in \Span\{x, \bJ x, \dotsc, \bJ^{q-1} x \}$ which implies that $\Span \{ x, \dotsc, \bJ^{q-1} x\}$ is a closed $\bJ$-invariant subspace of $\ker \bR$ which equals $\{0\}$ by assumption and hence $x=0$.

Finally, we establish \ref{p3} $\Leftrightarrow$ \ref{p4}. First, assuming that some eigenvector $x \ne 0$ of $\bJ$ satisfies $\bR x = 0$, this eigenvector spans a nonzero subspace of $\ker \bR$ which is invariant under $\bJ$. This gives $\neg$\ref{p4} $\Rightarrow$ $\neg$\ref{p3}, i.e., \ref{p3} $\Rightarrow$ \ref{p4}. Conversely, suppose that there exists a nonzero $\bJ$-invariant subspace of $\ker \bR$. As $\ker \bR$ is assumed to be finite-dimensional, it contains an eigenvector of $\bJ$ because the restriction of $\bJ$ to this subspace is still skew-adjoint.
This shows $\neg$\ref{p3} $\Rightarrow$ $\neg$\ref{p4}, i.e., \ref{p4} $\Rightarrow$ \ref{p3}.
\end{proof}

In many practically relevant examples of hypocoercive evolution equations, the self-adjoint part of their generator has an infinite dimensional kernel. In kinetic theory this kernel is typically spanned by the \emph{local equilibria}, see \cite{Villani, AAC}. However, when appropriately decomposing the original problem into subproblems, the analogous kernel is only finite dimensional and spanned by the \emph{global equilibria} (of the subproblems); for details see \cite{AAC, AAM22Oseen} and the example of Section \ref{sec:lorentz}, below.

To address this situation, we now discuss  the conditions in Proposition~\ref{propi} for the case $\dim \ker \bR = \infty$.

\begin{proposition}\label{propii}
Let $\bJ \in \cB(\cH)$ be skew-adjoint and $\bR \in \cB(\cH)$ be self-adjoint with $\bR \ge 0$ and possibly $\dim \ker \bR = \infty$. Consider the following modifications of conditions  \ref{p1}--\ref{p4} in Proposition~\ref{propi}:

\begin{enumerate}[label=(B\arabic*')]
\item\label{p1'} There exists $m \in \N_0\cup\{\infty\}$ 
such that
\begin{subequations} 
\begin{equation} \label{p1a'}
 \overline{\Span}( \bigcup_{j=0}^m \im ( \bJ^j \sqrt{\bR})) = \mathcal{H}.
\end{equation}
\item \label{p1c'}
There exists $m \in \N_0 \cup \{\infty\}$ such that
\begin{equation} \label{p1b'}
 \bigcap_{j=0}^m \ker ( \sqrt{\bR} (\bJ^*)^j) = \{ 0 \}. 
\end{equation}
\item\label{p2'} There exists $m \in \N_0\cup\{\infty\}$ such that $\forall x \in \cH \setminus\{0\}$ $\exists j=j(x) \le m$ if $m$ is finite or $j=j(x) \in \N$ if $m = \infty$ such that
\[ 
\langle \bJ^j \bR (\bJ^*)^j x, x \rangle > 0. \]
\end{subequations}
\item\label{p3'} If a closed subspace $V$ of $\ker \bR$ is invariant under $\bJ$ then $V = \{0\}$.
\item\label{p4'} No eigenvector of $\bJ$ lies in the kernel of $\bR$.
\end{enumerate}
Then the implications \ref{p1'} $\Leftrightarrow$ \ref{p1c'} $\Leftrightarrow$ \ref{p2'} $\Leftrightarrow$ \ref{p3'} $\Rightarrow$ \ref{p4'} hold. 

The implication \ref{p4'} $\Rightarrow$ \ref{p3'} holds if and only if $\dim \ker \bR < \infty$.
\end{proposition}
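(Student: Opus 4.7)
The strategy is to transport the proof of Proposition~\ref{propi} to the new setting, identifying the one step where finite dimensionality of $\ker\bR$ was essential and replacing it with topological arguments. Because $\bR\in\cB(\cH)$ is bounded, $\ker\bR$ is always closed, and because $\bJ\in\cB(\cH)$ is continuous, the closed linear span of a $\bJ$-orbit is again $\bJ$-invariant; these two observations will substitute for the dimension count that drove the original argument.

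The implications \ref{p1'} $\Leftrightarrow$ \ref{p1c'} $\Leftrightarrow$ \ref{p2'} $\Rightarrow$ \ref{p3'} and \ref{p3'} $\Rightarrow$ \ref{p4'} go through essentially as in Proposition~\ref{propi}. The equivalence \ref{p1'} $\Leftrightarrow$ \ref{p1c'} follows from $\bigl(\bigcup_j \im(\bJ^j\sqrt{\bR})\bigr)^\perp = \bigcap_j \ker(\sqrt{\bR}(\bJ^*)^j)$ together with density being equivalent to triviality of the orthogonal complement; the countable union/intersection that arises when $m=\infty$ causes no difficulty. The equivalence \ref{p1c'} $\Leftrightarrow$ \ref{p2'} rests on the identity $\|\sqrt{\bR}(\bJ^*)^j x\|^2 = \langle \bJ^j\bR(\bJ^*)^j x,x\rangle$ exactly as before. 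For \ref{p2'} $\Rightarrow$ \ref{p3'}, if $0\neq x\in V\subseteq\ker\bR$ with $V$ closed and $\bJ$-invariant, then $\bJ^j x\in V\subseteq\ker\bR$ forces $\sqrt{\bR}(\bJ^*)^j x = (-1)^j\sqrt{\bR}\bJ^j x = 0$ for every $j$, contradicting \ref{p2'}. Likewise, \ref{p3'} $\Rightarrow$ \ref{p4'} is immediate since an eigenvector of $\bJ$ lying in $\ker\bR$ would span a one-dimensional closed $\bJ$-invariant subspace of $\ker\bR$.

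The main obstacle is the reverse implication \ref{p3'} $\Rightarrow$ \ref{p2'}, since the original proof produced a finite $m\leq\dim\ker\bR$ by a linear-dependence argument that no longer applies. The plan is to simply take $m=\infty$: assuming \ref{p3'} and $\langle \bJ^j\bR(\bJ^*)^j x,x\rangle=0$ for every $j\in\N_0$, we deduce $\bJ^j x\in\ker\bR$ for all $j$, and then set
\[
V \coleq \overline{\Span}\{\bJ^j x : j\in\N_0\}.
\]
Closedness of $\ker\bR$ (as the kernel of a bounded operator) gives $V\subseteq\ker\bR$, and continuity of $\bJ$ makes $V$ a closed $\bJ$-invariant subspace; hence \ref{p3'} forces $V=\{0\}$ and therefore $x=0$. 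Finally, for the sharpness claim, the finite-dimensional case of \ref{p4'} $\Rightarrow$ \ref{p3'} is exactly the argument from Proposition~\ref{propi} (any skew-adjoint restriction to a nonzero finite-dimensional invariant subspace has an eigenvector), while in the infinite-dimensional case a counterexample suffices: on $\cH=L^2([0,1])$ take $(\bJ f)(x)=ixf(x)$ and $(\bR f)(x)=\chi_{[0,1/2]}(x)f(x)$. Then $\bJ$ is skew-adjoint with empty point spectrum, so \ref{p4'} holds vacuously, whereas $\ker\bR=\{f\in L^2([0,1]):f|_{[0,1/2]}=0\}$ is an infinite-dimensional closed $\bJ$-invariant subspace, so \ref{p3'} fails.
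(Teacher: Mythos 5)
Your proof is correct and follows essentially the same route as the paper: the key modification to Proposition~\ref{propi} is identical, namely replacing the linear-dependence count by taking the closed span $V = \overline{\Span}\{\bJ^j x : j\in\N_0\}$, which lies in the closed subspace $\ker\bR$ and is $\bJ$-invariant by continuity. The only cosmetic difference is in the counterexample for the necessity of $\dim\ker\bR<\infty$: you take $\bR$ to be multiplication by $\chi_{[0,1/2]}$ whereas the paper simply takes $\bR=0$; both work, since in either case $\ker\bR$ is a nonzero infinite-dimensional closed $\bJ$-invariant subspace while the multiplication operator $\bJ$ has empty point spectrum.
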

\begin{proof} The proof is essentially the same as  that of Proposition~\ref{propi}, replacing $j \le m$ by $j \in \N_0$ where necessary. For the implication \ref{p3'} $\Rightarrow$ \ref{p2'} the proof needs the following modification: For the considered $x$, let
\[
  V\coleq \Span \left( \bigcup_{j=0}^\infty \bJ^jx\right) \subset \ker \bR.
\]
Then 
\[
  \bJ(V)=\Span \left( \bigcup_{j=1}^\infty \bJ^jx\right) \subset V,
\]
and hence $\overline V\subset \ker\bR$ is a $\bJ$-invariant subspace of $\ker\bR$ which equals $\{0\}$ by assumption.

 To show that $\dim \ker \bR < \infty$ is necessary for \ref{p4'} $\Rightarrow$ \ref{p3'}, consider $\cH \coleq L^2([0,1])$ over $\C$, $\bR=0$ and let $\bJ$ be the multiplication operator $(\bJ f)(x) = i x f(x)$ for $f \in \cH$ and $x \in [0,1]$. Then $\cH$ is a $\bJ$-invariant subspace of $\ker \bR = \cH$, so \ref{p3'} fails to hold. But as $\bJ$ has no eigenvectors at all, \ref{p4'} holds.
\end{proof}

\begin{remark}
Conditions similar to the ones given in Proposition~\ref{propii}~\ref{p1'}--\ref{p3'} characterize \emph{asymptotically controllable/observable} linear systems, see~\cite[\S6]{CurtainZwart} and Appendix~\ref{ssec:control}.
\end{remark}

In the finite-dimensional setting, the conditions listed in Proposition~\ref{propi} are equivalent to hypocoercivity and thus exponential decay of the dynamical system \cite[Lemma 2.4]{AAC}. But, in the infinite-dimensional case, these conditions are too weak, as the following (strictly accretive) example shows:  Consider $\cH = \ell^2(\N)$ with canonical basis $(b_n)_{n \in \N}$ and operators $\bJ=0$, $\bR b_n = b_n/n$ (i.e., $\bR = \diag(1/n : n \in \N)$). Then $e^{-\bC t} b_n = e^{-t/n} b_n$ has arbitrarily small decay rate $1/n = \lambda >0$, hence $\bC$ is not hypocoercive. 
This illustrates that the absence of purely imaginary eigenvalues alone is not enough to characterize hypocoercivity of an accretive operator.
In an infinite-dimensional setting the spectrum of an operator may contain also an essential spectrum. This may lead to additional difficulties in establishing hypocoercivity, see e.g. \cite{EdmE18} for a detailed discussion of essential and point spectra.

A natural modification to prevent this problem is to demand some uniformity in Proposition \ref{propi} \ref{p2}, e.g., to replace ``$>0$'' by ``$\ge \kappa \bI$ for some $\kappa>0$.'' In this spirit, the following lemma is a stricter variant of Proposition \ref{propi} \ref{p1} and \ref{p2}. 
In fact, as we will see in Theorem \ref{cor:exp-stable} below, these new conditions are equivalent to hypocoercivity for bounded accretive operators.

\begin{lemma}[{\cite[Lemma 2]{AAM22Oseen}}] \label{lem:Op-Equivalence}
Let $\bJ \in \cB(\cH)$ be skew-adjoint and $\bR \in \cB(\cH)$ be self-adjoint with $\bR \ge 0$.
Then the following 
conditions are equivalent:
\begin{enumerate}[label=(B\arabic*'')]
\setcounter{enumi}{0}
\item \label{B:G*_surjective}
There exists $m\in\N_0$ such that
\[
\Span \Bigg(\bigcup_{j=0}^m \range\big(\mJ^j \sqrt{\bR}\big)\Bigg) =\cH\ .
\]
\item \label{B:KRC'op}
There exists $m\in\N_0$ such that
\[
 \bigcap_{j=0}^m \ker\big(\sqrt{\bR}(\mJ^*)^j\big) 
= \{0\} \quad
\text{and}\quad  
 \Span \Bigg(\bigcup_{j=0}^m \range\big(\mJ^j \sqrt{\bR}\big)\Bigg)\ \text{ is closed.}
\] 

\item \label{B:Tmop}
There exists $m\in\N_0$ such that $\sum_{j=0}^m \mJ^j \mR (\mJ^*)^j \ge\kappa\mI$ holds for some $\kappa>0$. 
\end{enumerate}
 
Moreover, the smallest possible~$m\in\N_0$ coincides in all cases (if it exists).
\end{lemma}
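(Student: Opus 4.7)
The plan is to package all the operators $\bJ^j \sqrt{\bR}$ for $j=0,\dotsc,m$ into a single bounded operator
\[
L_m \colon \cH^{m+1} \to \cH, \qquad L_m(x_0,\dotsc,x_m) \coleq \sum_{j=0}^m \bJ^j \sqrt{\bR}\, x_j,
\]
where $\cH^{m+1}$ carries the natural direct-sum Hilbert structure. A short calculation yields
\[
L_m^* y = \bigl(\sqrt{\bR}(\bJ^*)^j y\bigr)_{j=0}^m, \qquad L_m L_m^* = \sum_{j=0}^m \bJ^j \bR (\bJ^*)^j.
\]
Since $\Span\bigl(\bigcup_{j=0}^m \range(\bJ^j\sqrt{\bR})\bigr)$ coincides with $\range(L_m)$, condition \ref{B:G*_surjective} is exactly $\range(L_m)=\cH$, condition \ref{B:KRC'op} is $\ker(L_m^*)=\{0\}$ together with closedness of $\range(L_m)$, and condition \ref{B:Tmop} is $L_m L_m^* \ge \kappa \bI$.

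With this reformulation in place, I would invoke the standard Hilbert space fact that for $L \in \cB(X,Y)$ the following three properties are equivalent: (i) $L$ is surjective, (ii) $L^*$ is bounded below, (iii) $L L^* \ge \kappa \bI$ for some $\kappa>0$. The equivalence (ii) $\Leftrightarrow$ (iii) is immediate from $\|L^* y\|^2 = \langle L L^* y, y\rangle$ together with the spectral theorem for the nonnegative self-adjoint operator $L L^*$. The implication (i) $\Rightarrow$ (ii) is the standard consequence of the open mapping theorem applied to the induced bijection $X/\ker(L) \to Y$, which has a bounded inverse and thus yields a lower bound for $L^*$. Finally, (iii) $\Rightarrow$ (i) follows because (iii) makes $L L^*$ boundedly invertible, so every $y \in \cH$ is in the range via the explicit preimage $L^*(L L^*)^{-1} y$. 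Applying this with $L = L_m$ gives \ref{B:G*_surjective} $\Leftrightarrow$ \ref{B:Tmop}.

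For the equivalence with \ref{B:KRC'op}, I would use the orthogonality relation $\ker(L_m^*) = \overline{\range(L_m)}^{\,\perp}$, so that $\ker(L_m^*)=\{0\}$ is equivalent to $\overline{\range(L_m)}=\cH$. Consequently \ref{B:G*_surjective} trivially implies \ref{B:KRC'op} (the range is already $\cH$, hence both closed and dense), while conversely \ref{B:KRC'op} forces $\range(L_m)$ to be a closed dense subspace, hence equal to $\cH$, recovering \ref{B:G*_surjective}.

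All three reformulations are carried out for the same fixed $m$, so the smallest admissible $m$ coincides across \ref{B:G*_surjective}, \ref{B:KRC'op}, and \ref{B:Tmop}. I do not expect a serious obstacle here: the open mapping argument is completely standard, and the real content of the proof is the clean reformulation in terms of the single block operator $L_m$, after which the three conditions reduce to three well-known characterizations of surjectivity of a bounded operator between Hilbert spaces.
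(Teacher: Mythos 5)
Your proof is correct and takes essentially the same route as the paper: the proof of Lemma~\ref{lemmaiii} (which subsumes equivalence \eqref{lemmaiii_1}) packages the operators into a single block operator $\bT = (\sqrt{\bR}, \sqrt{\bR}\bJ^*, \dotsc, \sqrt{\bR}(\bJ^*)^m)$, which is exactly your $L_m^*$, and then cites standard results from Brezis; you simply spell out the open mapping and orthogonality arguments that those citations encapsulate.
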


\begin{remark}\label{remark25}
\begin{enumerate}[label=(\roman*)]
\item\label{remark25.1}  
Contrarily to the case of Proposition \ref{propi}, the dimension of $\ker \bR$ does \emph{not} give an upper bound on the minimal value of $m$ such that \ref{B:Tmop} holds, as the following example shows: Consider the block-diagonal operator
    \[ \bC = \diag \left\{ \begin{bmatrix} \frac{1}{n} & 1 \\ -1 & 1 \end{bmatrix} : n \in \N \right\} = \bR - \bJ \] 
    with
    \[ \bR = \diag \left\{ \begin{bmatrix} \frac{1}{n} & 0 \\ 0 & 1 \end{bmatrix} : n \in \N \right \},\quad \bJ = \diag \left\{ \begin{bmatrix} 0 & -1 \\ 1 & 0 \end{bmatrix} : n \in \N \right\}. \]
    Then, although $\ker \bR = \{0\}$, \ref{B:Tmop} fails to hold for $m = 0$ but it holds for $m=1$ with $\kappa=1$, as is seen in
    \[ \bR + \bJ \bR \bJ^* = \diag \left\{ \begin{bmatrix} 1+\frac{1}{n} & 0 \\ 0 & 1 + \frac{1}{n} \end{bmatrix} : n \in \N \right\} \ge \bI. \]
\item We could also formulate conditions \ref{B:G*_surjective}--\ref{B:Tmop} for $m = \infty$ (scaling the operators such that the series in \ref{B:Tmop} converges, if necessary), but we will see in Proposition \ref{prop:fin-index} below that this case cannot occur in the setting of bounded operators.
\item \label{remark25.3} We note that if $\bR$ is compact and $\cH$ is infinite-dimensional then~\ref{B:Tmop} cannot hold for any $m$: 
On separable infinite-dimensional Hilbert spaces, the subset of compact operators $\mathcal{K}(\cH)\subset\cB(\cH)$ is an ideal in~$\cB(\cH)$.
Consider $\bC=\bR-\bJ\in\cB(\cH)$ under the assumptions of Lemma~\ref{lem:Op-Equivalence} and $\bR\in\cK(\cH)$.
Hence, for $m\in\N$, the operators $\sum_{j=0}^m \mJ^j \mR (\mJ^*)^j$ are compact. Therefore, condition~\ref{B:Tmop} holds if and only if $\cH$ is finite-dimensional.

Consider $\bC\in\cB(\cH)$ with $\cH=\ell^2(\N)$ given as
\[
 \mC =\mR -\mJ 
\qquad\text{with }
 \mR =\diag(1/n : n \in \N) , \quad
 \mJ =\tridiag(1,0,-1) .
\]
The operator $\mC$ is strictly accretive, since~$\mCH=\mR>0$. Moreover, $\mR$ is compact, hence, the operator~$\bC$ cannot be hypocoercive.
\end{enumerate}
\end{remark}

The next lemma gives additional variants of these conditions that will be useful in the subsequent analysis.
For its proof we note that, 
if $(\cH_i)_{i=1, \dotsc, n}$ and $(\cK_j)_{j=1, \dotsc, m}$ are two finite families of Hilbert spaces with canonical projections $\pi_{j_0} \colon \bigoplus_{j=1}^m \cK_j \to \cK_{j_0}$ and injections $\iota_{i_0} \colon \cH_{i_0} \to \bigoplus_{i=1}^n \cH_i$, respectively, then the kernel and the image of an operator 
\[ \bT \colon \bigoplus_{i=1}^n \cH_i \to \bigoplus_{j=1}^m \cK_j \]
can be written as
\[ \ker \bT = \bigcap_{j=1}^m \ker (\pi_j \circ \bT), \qquad \im \bT = \Span \left( \bigcup_{i=1}^n \im (\bT \circ \iota_i) \right). \]

\begin{lemma}\label{lemmaiii}
Let $\bJ \in \cB(\cH)$ be skew-adjoint and $\bR \in \cB(\cH)$ be self-adjoint with $\bR \ge 0$ and let $m \in \N_0$. Set $\bC = \bR -\bJ \in \cB(\cH)$. Define $\bC_0 := \sqrt{\bR}$ and the iterated commutators $\bC_{j+1} := [\bJ, \bC_j]=\bJ \bC_j-\bC_j\bJ$, $j \in \N_0$. 
Then
\begin{align}
\label{lemmaiii_1}\exists \kappa_1 >0: \sum_{j=0}^m \bJ^j \bR (\bJ^*)^j \ge \kappa_1 \bI & \Longleftrightarrow \Span \Big(\bigcup_{j=0}^m \im ( \bJ^j \sqrt{\bR})\Big) = \cH \\
\Longleftrightarrow \bigcap_{j=0}^m \ker & \big( \sqrt{\bR} (\bJ^*)^j\big) = \{0 \} \textrm{ and } \Span \Big(\bigcup_{j=0}^m \im ( \bJ^j \sqrt{\bR})\Big) \textrm{ is closed}, \nonumber \\
\exists \kappa_2 >0: \sum_{j=0}^m \bC^j \bR (\bC^*)^j \ge \kappa_2 \bI & \Longleftrightarrow \Span \Big(\bigcup_{j=0}^m \im ( \bC^j \sqrt{\bR} )\Big)= \cH \label{lemmaiii_2} \\
\Longleftrightarrow \bigcap_{j=0}^m \ker & \big(\sqrt{\bR} (\bC^*)^j \big)= \{0 \} \textrm{ and } \Span \Big(\bigcup_{j=0}^m \im ( \bC^j \sqrt{\bR} )\Big) \textrm{ is closed}, \nonumber\\
\exists \kappa_3 >0: \sum_{j=0}^m (\bC^*)^j \bR \bC^j \ge \kappa_3 \bI & \Longleftrightarrow \Span \Big(\bigcup_{j=0}^m \im ( (\bC^*)^j \sqrt{\bR} )\Big)= \cH\label{lemmaiii_3} \\
\Longleftrightarrow \bigcap_{j=0}^m \ker & \big(\sqrt{\bR} \bC^j\big) = \{0 \} \textrm{ and } \Span \Big(\bigcup_{j=0}^m \im ( (\bC^*)^j \sqrt{\bR} )\Big) \textrm{ is closed}, \nonumber\\
\exists \kappa_4 >0: \sum_{j=0}^m \bC_j^* \bC_j \ge \kappa_4 \bI & \Longleftrightarrow \Span \Big(\bigcup_{j=0}^m \im \bC_j\Big) = \cH \label{lemmaiii_4} \\
\Longleftrightarrow \bigcap_{j=0}^m \ker & (\bC_j) = \{0 \} \textrm{ and } \Span \Big(\bigcup_{j=0}^m \im \bC_j\Big) \textrm{ is closed}.\nonumber
\end{align}
In addition, all listed conditions are equivalent. 
\end{lemma}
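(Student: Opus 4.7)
The plan is to split the proof into two stages: intra-line equivalences (each of \eqref{lemmaiii_1}--\eqref{lemmaiii_4} is a three-way equivalence internally) followed by an inter-line identification that collapses them all to the same underlying condition.

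For the intra-line step, I would distill the functional-analytic content of Lemma~\ref{lem:Op-Equivalence} into an abstract template: given any finite family $\bT_0,\dotsc,\bT_m\in\cB(\cH)$, the stacked operator $\bT\colon\cH\to\bigoplus_{j=0}^m\cH$, $\bT x=(\bT_0 x,\dotsc,\bT_m x)$, satisfies $\bT^*\bT=\sum_{j=0}^m \bT_j^*\bT_j$ and $\im \bT^*=\Span(\bigcup_{j=0}^m\im\bT_j^*)$, so the standard Hilbert-space facts ($\bT$ is bounded below $\Leftrightarrow$ $\bT$ is injective with closed range $\Leftrightarrow$ $\bT^*$ is surjective) yield the three-way equivalence of coercivity of $\sum\bT_j^*\bT_j$, of triviality of $\bigcap\ker\bT_j$ together with closedness of $\Span(\bigcup\im\bT_j^*)$, and of $\Span(\bigcup\im\bT_j^*)=\cH$. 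I would then specialize this template to $\bT_j=\sqrt{\bR}(\bJ^*)^j$, $\bT_j=\sqrt{\bR}(\bC^*)^j$, and $\bT_j=\sqrt{\bR}\bC^j$ to obtain \eqref{lemmaiii_1}, \eqref{lemmaiii_2} and \eqref{lemmaiii_3}, respectively. For \eqref{lemmaiii_4} I would first verify by induction that each $\bC_j$ is self-adjoint (since $[\bJ,S]$ is self-adjoint whenever $\bJ$ is skew-adjoint and $S$ is self-adjoint), so that $\im\bC_j^*=\im\bC_j$ and the template applies with $\bT_j=\bC_j$.

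For the inter-line step I would prove by induction on $m$ that the four algebraic subspaces
\begin{align*}
 W_m^{\bJ} &\coleq \Span\bigcup_{j=0}^m\im(\bJ^j\sqrt{\bR}), & W_m^{\bC} &\coleq \Span\bigcup_{j=0}^m\im(\bC^j\sqrt{\bR}), \\
 W_m^{\bC^*} &\coleq \Span\bigcup_{j=0}^m\im((\bC^*)^j\sqrt{\bR}), & W_m^{\mathrm{c}} &\coleq \Span\bigcup_{j=0}^m\im\bC_j
\end{align*}
coincide. The engine is the observation $\bR x=\sqrt{\bR}(\sqrt{\bR}x)\in\im\sqrt{\bR}\subseteq W_0^{\bJ}$, which gives $\bR W\subseteq W_0^{\bJ}\subseteq W_m^{\bJ}$ for any subspace $W$. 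Combined with $\bC=\bR-\bJ$ and the recursion $W_j^{\bJ}=W_0^{\bJ}+\bJ W_{j-1}^{\bJ}$ (and analogously for $W_j^{\bC}$), a short two-sided inclusion yields $W_m^{\bJ}=W_m^{\bC}$; using $\bJ=\bC^*-\bR$ the same argument gives $W_m^{\bC^*}=W_m^{\bJ}$. For the commutator case I would establish by induction on $j$ the explicit expansion
\[
 \bC_j=\sum_{k=0}^j(-1)^{j-k}\binom{j}{k}\bJ^k\sqrt{\bR}\,\bJ^{j-k},
\]
which places $\im\bC_j\subseteq W_j^{\bJ}$ and, being upper-triangular in the leading term $\bJ^j\sqrt{\bR}$, can be inverted inductively to express $\bJ^j\sqrt{\bR}$ as $\bC_j$ plus earlier summands whose images already lie in $W_{j-1}^{\bJ}$. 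This yields $W_m^{\mathrm{c}}=W_m^{\bJ}$, and combined with the intra-line step establishes the mutual equivalence of all twelve conditions for the same $m$.

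The main obstacle is the commutator case \eqref{lemmaiii_4}. The awkwardness is that $\bJ^k\sqrt{\bR}\bJ^{j-k}$ is not of the form $\bJ^\ell\sqrt{\bR}$, so its membership in $W_j^{\bJ}$ is not a priori clear; however, $\im(\bJ^k\sqrt{\bR}\bJ^{j-k})\subseteq\im(\bJ^k\sqrt{\bR})\subseteq W_j^{\bJ}$, a fact that the binomial expansion makes transparent. Once this expansion is available, the remaining arguments are pure bookkeeping.
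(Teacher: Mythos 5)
Your proof is correct and follows the same two-stage strategy as the paper: the stacked-operator reduction to coercivity/closed-range/surjectivity of $\bT\colon\cH\to\bigoplus_{j=0}^m\cH$ for the intra-line equivalences, followed by showing that the four algebraic spans coincide for the inter-line step. The only differences are cosmetic --- you invert the explicit binomial expansion of $\bC_j$ where the paper runs a direct commutator induction $\bJ\bC_j = \bC_{j+1}+\bC_j\bJ$, and you deduce the kernel-intersection equivalences from the span equality rather than citing Lemma~\ref{lemmaii} separately --- so this is essentially the paper's argument.
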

\begin{proof}
Considering \eqref{lemmaiii_1} for instance, if we set
\[ \bT = (\sqrt{\bR}, \sqrt{\bR}\bJ^*, \dotsc, \sqrt{\bR} (\bJ^*)^m) \colon \cH \to \cH \oplus \dotsc \oplus \cH \]
then the assumption $\sum_{j=0}^m \bJ^j \bR (\bJ^*)^j \ge \kappa_1 \bI$ means precisely that $\norm{\bT x}^2 \ge \kappa_1 \norm{x}^2$. An application of \cite[Theorem 2.21]{Brezis} together with \cite[Theorem 2.19]{Brezis} then immediately gives the claim. The other equivalences are shown in the same way.

To prove the equivalence of \eqref{lemmaiii_1}--\eqref{lemmaiii_4}, we note that the conditions on the intersections of the kernels are equivalent because of Lemma \ref{lemmaii}. For the equivalence of the closedness of the given linear spans in rows one and two, we first show that $\im (\bJ^k \sqrt{\bR}) \subseteq \Span \big(\bigcup_{j=0}^m \im (\bC^ j \sqrt{\bR})\big)$ for $k \in \{0, \dotsc, m\}$. To see this, we note that for any $x \in \cH$
\begin{align*}
\bJ^k \sqrt\bR x &= (\bR-\bC)^k \sqrt\bR x \
\end{align*}
consists of a sum of $2^k$ terms of the form
\[ \bT_1 \dotsc \bT_k \sqrt\bR x \]
with $\bT_i \in \{ \bR, -\bC \}$ for all $i \in \{ 0, \dotsc, k \}$. Each of these terms can be written in the form $\bC^j \sqrt \bR y$ for some $y \in \cH$ and $j \in \{0, \dotsc, m\}$. The converse inclusion, $\im(\bC^k \sqrt \bR) \subseteq \Span \big(\bigcup_{j=0}^m \im(\bJ^j \sqrt\bR)\big)$, is seen by expanding $\bC^k \sqrt\bR x = (\bR - \bJ)^k \sqrt\bR x$ and using the same argument again. As $\bC^* = \bR + \bJ$ we similarly obtain equivalence of \eqref{lemmaiii_1} and \eqref{lemmaiii_3}. To see equivalence of \eqref{lemmaiii_1} and \eqref{lemmaiii_4} we note that $\bC_k$ consists of a sum of terms of the form
\[ \pm \bT_0 \dotsc \bT_k \]
where exactly one of the $\bT_i$ equals $\sqrt\bR$ and the others equal $\bJ$, hence we have $\im(\bC_k) \subseteq \Span \bigcup_{j=0}^k \im (\bJ^j \sqrt \bR)$.

Finally, we establish $\im(\bJ^k \sqrt\bR) \subseteq \Span \big(\bigcup_{j=0}^k \im \bC_j\big)$ inductively. This is clear for $k=0$ because $\bC_0 = \sqrt\bR$. Assuming that the claim holds for $k$, we see that for any $x \in \cH$ there are $x_j \in \cH$ ($j = 0, \dotsc, k$) such that
\begin{align*}
\bJ^{k+1} \sqrt\bR x &= \bJ \bJ^k \sqrt\bR x = \bJ \sum_{j=0}^k \bC_j x_j\\
&= \sum_{j=0}^k ( \bJ \bC_j - \bC_j \bJ + \bC_j \bJ) x_j \\
&= \sum_{j=0}^k \bC_{j+1} x_j + \sum_{j=0}^k \bC_j \bJ x_j \subseteq \Span \Big(\bigcup_{j=0}^{k+1} \im \bC_j\Big). \qedhere
\end{align*}
\end{proof}

The middle column of these equivalences (i.e., the statement that the images of $\bJ^j \sqrt{\bR}$ etc.\ span $\cH$) can be considered as variants of the classical \emph{Kalman rank condition} in control theory. In fact,
conditions similar to the ones given in Lemma~\ref{lem:Op-Equivalence}~\ref{B:G*_surjective}--\ref{B:Tmop} characterize \emph{exactly controllable/observable} linear systems, as is detailed in \cite[\S6]{CurtainZwart} and Appendix~\ref{ssec:control}.


\section{Hypocoercivity Index}\label{sec_hcindex}

In this section we define the hypocoercivity index for bounded accretive operators in the form appropriate for the (typically infinite-dimensional) Hilbert space setting.

\begin{definition}[{\cite[Definition 5]{AAM22Oseen}}]
\label{def:op:HC-index}
Let~$\cH$ be a separable Hilbert space.
For accretive operators $\mC\in\cB(\cH)$, the~\emph{hypocoercivity index (HC-index)~$m_{HC}$ of~$\mC$} is defined as the smallest integer~$m\in\N_0$ (if it exists) such that 
\begin{equation}\label{Op:hypocoercive:kappa}
  \sum_{j=0}^m (\mC^*)^j \mCH \mC^j \ge\kappa\mI 
\end{equation}
for some $\kappa>0$. 
\end{definition}

\begin{remark}
\begin{enumerate}[label=(\roman*)]
    \item
Using the equivalence of conditions from Lemma \ref{lemmaiii},  Definition~\ref{def:op:HC-index} could also be based on the coercivity of $\sum_{j=0}^m \mCS^j \mCH (\mCS^*)^j$, as used in Lemma \ref{lem:Op-Equivalence} \ref{B:Tmop}.


\item Following Proposition \ref{propii}, conditions \ref{B:G*_surjective} and \eqref{Op:hypocoercive:kappa}, the latter if $\|\bC\|<1$, would also make sense for $m=\infty$. Hence, a natural question is whether an infinite hypocoercivity index would make sense in Definition \ref{def:op:HC-index}. Indeed, this may be the case for \emph{unbounded} operators: 
Example \ref{example:AAC} below presents an unbounded hypocoercive operator for which no (finite) hypocoercivity index exists. 
\end{enumerate}
\end{remark}

For \emph{bounded} operators, the hypocoercivity index can only be finite, as the following result shows.
\begin{proposition}\label{prop:fin-index}
Let $\bJ \in \cB(\cH)$ be skew-adjoint and $\bR \in \cB(\cH)$ be self-adjoint with $\bR \ge 0$.
If
\begin{equation} \label{span:infinity}
 \Span \left( \bigcup_{j=0}^\infty \im \left( \bJ^j \sqrt{\bR}\right)\right) = \cH ,
\end{equation}
then there exists $m \in \N_0$ such that
\begin{equation} \label{span:m}
\Span \left( \bigcup_{j=0}^m \im \left( \bJ^j \sqrt{\bR}\right)\right) = \cH. 
\end{equation}
\end{proposition}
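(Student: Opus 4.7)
My plan is to apply the Baire category theorem combined with the strong form of the Banach--Schauder open mapping theorem, in order to upgrade the set-theoretic equality $\cH = \bigcup_m V_m$ provided by \eqref{span:infinity} to the equality $V_m = \cH$ for a single finite $m$.

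To that end I introduce, for each $m\in\N_0$, the bounded linear operator
\[
  \bT_m \colon \bigoplus_{j=0}^m \cH \;\to\; \cH, \qquad \bT_m(y_0,\dots,y_m) \coleq \sum_{j=0}^m \bJ^j \sqrt{\bR}\, y_j,
\]
which is bounded because $\bJ,\sqrt{\bR}\in\cB(\cH)$. Its range is
\[
  V_m \coleq \im \bT_m = \Span\!\Bigl(\bigcup_{j=0}^m \im(\bJ^j\sqrt{\bR})\Bigr),
\]
these subspaces form a nondecreasing chain, and the hypothesis~\eqref{span:infinity} says precisely that $\cH=\bigcup_{m=0}^\infty V_m$.

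Next I would invoke Baire's theorem: $\cH$ is a complete metric space and hence non-meager in itself, and a countable union of meager sets is again meager, so at least one $V_{m_0}$ is of the second category in $\cH$. I then apply the strong form of the open mapping theorem: a continuous linear operator between Banach spaces whose range is of the second category in the target is automatically surjective (and open). Applied to $\bT_{m_0}$ this yields $V_{m_0}=\cH$, which is exactly~\eqref{span:m}.

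The decisive step is the last one. Applying Baire only to the closures $\overline{V_m}$ would give only $\overline{V_{m_0}}=\cH$, i.e.\ density, and density of the range of a bounded operator does \emph{not} imply surjectivity in general (for instance, $\diag(1/n)$ on $\ell^2$ has dense range but is not surjective). So the subtlety is to apply Baire directly to the non-closed subspaces $V_m$ themselves, and then to use the stronger open-mapping statement to convert non-meagerness into honest surjectivity. If one prefers a self-contained argument over citing the strong open mapping theorem, one can reproduce its short proof in place: non-meagerness of $V_{m_0}$ implies that $\overline{\bT_{m_0}(B_1)}$ has an interior point, and then symmetry and convexity of the unit ball $B_1$ together with the standard iterative Baire argument show that $\bT_{m_0}(B_1)$ itself already contains a ball centered at the origin, giving surjectivity.
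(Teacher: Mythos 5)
Your proof is correct and follows essentially the same route as the paper: you define the same bounded operators $\bT_m$ (the paper calls them $\bA_m$), use Baire's category theorem to find a non-meager range, and invoke the strong form of the open mapping theorem to conclude surjectivity. The paper additionally notes that the argument is inspired by Fuhrmann's work, but the mathematical content is identical.
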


\begin{proof}
We have
\[ \cH = \bigcup_{j=0}^\infty \left( \Span \bigcup_{i=0}^j \im \left( \bJ^i \sqrt{\bR} \right) \right) = \bigcup_{j=0}^\infty \im ( \bA_j), \]
where $\bA_j \colon \cH^{j+1} \to \cH$ is defined by 
\[ \bA_j \coleq \mathfrak{S} \circ \left( \sqrt{\bR} \oplus \bJ \sqrt{\bR} \oplus \dotsc \oplus \bJ^j \sqrt{\bR} \right). \]
Here, $\sqrt{\bR} \oplus \bJ \sqrt{\bR} \oplus \dotsc \oplus \bJ^j \sqrt{\bR} \colon \cH^{j+1} \to \cH^{j+1}$ is the usual direct sum of the operators $\bJ^i \sqrt{\bR}$, $i=0, \dotsc ,j$, and $\mathfrak{S} \colon \cH^{j+1} \to \cH$, $(x_0, \dotsc, x_j) \mapsto x_0 + \dotsc + x_j$ the sum of all components.

The following argument is inspired by \cite[Theorem 1.2]{Fuhrmann}: 
Suppose each $\im(\bA_j)$ is meager, i.e., a countable union of nowhere dense sets, then the same is true for $\cH$, but as a complete metric space $\cH$ cannot be meager by Baire's theorem \cite[\nopp 2.2]{rudin}. Hence there is some $m \in \N_0$ such that $\im(\bA_m)$ is nonmeager and by the open mapping theorem \cite[\nopp 2.11]{rudin} $\bA_m$ is surjective, which gives the claim.

\end{proof}


\section{Short-time decay of the propagator norm}\label{sec_decay}

Coercive operators $\bC$ give rise to exponential decay of all solutions with the multiplicative constant $C=1$ in Definition \ref{def:hypoco}. But for hypocoercive operators, in general, one has to choose $C>1$, since an initial condition $x_0\in\ker \bR$ implies for the corresponding solution of \eqref{lineveq}: $\frac{d}{dt}\|x(t)\|\Big|_{t=0}=0$. This behavior is well-known analytically and numerically for degenerate kinetic PDEs \cite{AE14, FMP06} as well as for ODEs~\cite{Da07,So06,TrEm05}. In the latter case the precise behavior of the propagator norm close to $t=0$ (and hence also of the solution norms) was recently characterized in \cite{AAC2} in terms of the hypocoercivity index of $\bC$. 

In this section we extend that analysis to evolution equations of the form \eqref{lineveq} in infinite-dimensional separable Hilbert spaces. Typically, such an analysis of long- and short-time decay is carried out via the study of the semigroup associated with the solution of~\eqref{lineveq}.
Recall that a $C_0$-semigroup $(\bP(t))_{t\ge0}$ is called \emph{uniformly stable} if $\lim_{t\to\infty} \|\bP(t)\|=0$, and \emph{uniformly exponentially stable} if there exists $\varepsilon>0$ such that $\lim_{t\to\infty} e^{\varepsilon t}\|\bP(t)\|=0$.

For strongly continuous semigroups, uniform exponential stability can be characterized in various ways, e.g., as in Proposition~\ref{prop:UniExpStableSG} below or via an integrability condition on the propagator norm, see~\cite[Theorem V.1.8]{EngelNagel2000} and \cite[Lemma 4.1.2]{CurtainZwart}.
In the Hilbert space setting, uniform exponential stability of a strongly continuous semigroup can be inferred from its generator: either as a condition on its resolvent, see~\cite[Theorem V.1.11]{EngelNagel2000}; 
or by the existence of a strict Lyapunov functional, see, e.g.,~\cite[\nopp 4.1.3]{CurtainZwart}.
%
In our analysis based on the concept of hypocoercivity, to decide if an accretive operator~$\mC\in\cB(\cH)$ is hypocoercive, we will study its propagator norm~$\|e^{-\mC t}\|$ for \emph{small} $t\geq 0$. 

\subsection{Main results}\label{sec:4-main}

In general, we have limited information about differentiability or analyticity of the propagator norm $\norm{e^{-\bC t}}$ and hence about its initial behavior:
For bounded operators~$\bC\in\cB(\cH)$, the propagator norm~$\|e^{-\bC t}\|$ is right-differentiable at $t=0$, since norms are differentiable in every point and every direction~\cite[Chapter 2, Example 7.7]{De85}.
The one-sided differentiability is used to define the~\emph{logarithmic norm} which determines the exponential rate of the short-time behavior of~$\|e^{-\bC t}\|$, see~\cite{So06,TrEm05}.
In this section, we use methods introduced in \cite{AAC2} to show that the hypocoercivity index of an accretive operator~$\mC\in\cB(\cH)$ characterizes the initial decay at $t=0$ of the propagator norm.

We denote the solution semigroup pertaining to~\eqref{lineveq} by $\mP(t)\coleq e^{-\mC t}\in \cB(\cH)$ ($t \ge 0$) and its operator norm by $\|\mP(t)\| \coleq \sup\{\|\mP(t)x\| : \|x\| =1\}$. 
Our main result relating the qualitative short-time decay behavior to the hypocoercivity index is as follows.

\begin{theorem} \label{thm:short-t-decay:sandwich}
Let the operator $\bC\in\cB(\cH)$ be accretive.
\begin{itemize}
    \item[(a)] If $\bC$ has hypocoercivity index $\mHC\in\N_0$, then there exist constants $c_1\geq c \ge c_2>0$, $C_1, C_2 \ge 0$ and $t_0>0$ such that with $a:=2\mHC+1$ we have the estimates
\begin{equation} \label{short-t-decay:sandwich}
 1-c_1 t^a -C_1 t^{a+1}
\leq \| \mP(t)\|
\leq 1-c_2 t^a + C_2 t^{a+1}
\quad\text{for } t \in [0, t_0]
\end{equation}
and
\begin{equation}\label{d25}
\norm{\mP(t)} = 1 - ct^a + o(t^a)\qquad \mbox{for }t\to0,
\end{equation}
where
\small
\begin{equation} \label{def-c}
  c \coleq
 \frac{\lim\limits_{\delta\to 0}\ \Big[ \inf \big\{\norm{\sqrt{\bC_H}\bC^{m_{HC}} x}^2 : \\
 x\in\sphere,\ \norm{\sqrt{\bC_H}\bC^j x}^2
\le \delta\ \forall j=0,\dotsc,m_{HC}-1 \big\}\, \Big]}{(2m_{HC}+1)!\, \binom{2m_{HC}}{m_{HC}}}\,, 
\end{equation}
with the sphere $\sphere \coleq \{ x\in\cH : \norm{x} =1\}$.
\normalsize  
\item[(b)] Conversely, if the propagator norm satisfies \eqref{d25} with some real constants $c>0$ and $a>0$, then $a$ is an odd integer 
and $\bC$ is hypocoercive with index $\mHC=\frac{a-1}{2}\in\N_0$.
\end{itemize}
\end{theorem}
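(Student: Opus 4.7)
My approach is to derive a Taylor expansion of $g_x(t):=\|e^{-\bC t}x\|^2$ around $t=0$ and then carefully take the supremum over $x\in\sphere$. Starting from the identity
\[
g_x(t) = 1 - 2\int_0^t \|\sqrt{\bR}\,e^{-\bC s}x\|^2\,ds,
\]
which follows by differentiating $\langle e^{-\bC s}x,e^{-\bC s}x\rangle$ and using $\bC+\bC^*=2\bR$, and using the norm-convergent Taylor series $e^{-\bC s}x=\sum_{k\ge 0}\frac{(-s)^k}{k!}\bC^k x$ (valid since $\bC\in\cB(\cH)$), one obtains the power-series representation
\[
g_x(t) = 1 + 2\sum_{n\ge 0}\frac{(-1)^{n+1}\,t^{n+1}}{n+1}\sum_{j+k=n}\frac{\langle\sqrt{\bR}\bC^j x,\sqrt{\bR}\bC^k x\rangle}{j!\,k!},
\]
with explicit remainders controlled by $\|\bC\|$. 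Hence the short-time behavior of $g_x$ is entirely encoded in the quantities $\sqrt{\bR}\bC^j x$, $j\ge 0$, that already govern the HC-condition.

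\textbf{Upper bound and identification of $c$.} Lemma~\ref{lem:Op-Equivalence}~\ref{B:Tmop}, together with equation~\eqref{lemmaiii_3} of Lemma~\ref{lemmaiii}, yields the coercivity estimate $\sum_{j=0}^{m_{HC}}\|\sqrt{\bR}\bC^j x\|^2\ge\kappa$ for $\|x\|=1$. I would split $\sphere$ into a \emph{regular} regime, where some $\|\sqrt{\bR}\bC^j x\|^2$ with $j<m_{HC}$ exceeds a threshold---in which case a lower power of $t$ already forces $g_x(t)\le 1-c' t$---and a \emph{near-kernel} regime where $\|\sqrt{\bR}\bC^j x\|^2\le\delta$ for all $j<m_{HC}$. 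In the latter, the coercivity forces $\|\sqrt{\bR}\bC^{m_{HC}}x\|^2\ge\kappa-m_{HC}\delta>0$, and all Taylor coefficients at $t^\ell$ with $\ell\le 2m_{HC}$ are $O(\delta)$, so the leading non-negligible contribution is of order $t^{2m_{HC}+1}$. Bounding cross terms uniformly via Cauchy--Schwarz and taking $\sup_{\|x\|=1}$ yields the upper estimate in~\eqref{short-t-decay:sandwich} for $\|\mP(t)\|^2$, and hence for $\|\mP(t)\|$ via $\sqrt{1-u}=1-u/2+O(u^2)$.

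The matching lower bound and the explicit constant $c$ in~\eqref{def-c} come from near-extremal test vectors. A direct computation gives, for $x_0\in\sphere$ with $\sqrt{\bR}\bC^j x_0=0$ ($j<m_{HC}$),
\[
g_{x_0}(t)=1-\tfrac{2\,t^{2m_{HC}+1}}{(2m_{HC}+1)(m_{HC}!)^2}\|\sqrt{\bR}\bC^{m_{HC}}x_0\|^2 + O(t^{2m_{HC}+2}),
\]
but the true supremum of $g_x(t)$ over $\sphere$ is attained only in the limit: perturbing $x_0$ in the direction of $\sqrt{\bR}\bC^{m_{HC}}x_0$ by an amount of order $t$ activates a positive cross term from the $n=2m_{HC}$ layer of the expansion, and a quadratic optimization in the perturbation parameter converts the naive coefficient $(2m_{HC}+1)(m_{HC}!)^2$ into the combinatorial factor $(2m_{HC}+1)!\binom{2m_{HC}}{m_{HC}}$ appearing in~\eqref{def-c}. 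The $\lim_{\delta\to 0}\inf$ structure is the correct infinite-dimensional analogue of this optimization, since the exact kernel chain may be trivial while its $\delta$-neighborhoods are not.

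\textbf{Part~(b) and main obstacle.} For the converse, if $\bC$ were not hypocoercive, Lemma~\ref{lem:Op-Equivalence} combined with Proposition~\ref{prop:fin-index} would produce $x_n\in\sphere$ with $\|\sqrt{\bR}\bC^j x_n\|\to 0$ for every $j\in\N_0$; the above expansion then gives $g_{x_n}(t)\to 1$ for each fixed $t$, contradicting $\|\mP(t)\|\le 1-ct^a+o(t^a)$ with $c>0$. Hence $\bC$ has some finite HC-index $m'$, and part~(a) produces $\|\mP(t)\|=1-\tilde c t^{2m'+1}+o(t^{2m'+1})$ with $\tilde c>0$; comparing with the assumption forces $a=2m'+1$ and $m_{HC}=(a-1)/2$. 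The technical heart---and the main obstacle---of the argument is the perturbation optimization identifying the constant $c$: the naive "in-chain" coefficient $(2m_{HC}+1)(m_{HC}!)^2$ differs from the true one by a factor of $\binom{2m_{HC}}{m_{HC}}^2$, and recovering it requires carefully balancing an $O(t)$-perturbation off the near-kernel against the leading $t^{2m_{HC}+1}$-term, with uniform error control on $\sphere$---a calculation essentially parallel to the matrix case in~\cite{AAC2}, but requiring the verification that the relevant infima behave well under $\delta\to 0$ in infinite dimensions.
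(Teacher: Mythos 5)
Your overall plan matches the paper's in broad strokes: derive a power series for $g_x(t)=\|e^{-\bC t}x\|^2-1$, split $\sphere$ into a far regime ($\lambda_x>\delta$) and a near-kernel regime, identify $c$ via a test-vector optimization, and prove (b) by contradiction using the implication hypocoercive $\Rightarrow$ finite index. Your power-series expansion
\[
g_x(t)=-2\sum_{n\ge0}\tfrac{(-1)^n t^{n+1}}{n+1}\sum_{j+k=n}\tfrac{\langle\sqrt{\bR}\bC^j x,\sqrt{\bR}\bC^k x\rangle}{j!\,k!}
\]
is correct and is essentially a re-packaging of the paper's $\mQ(t)=\sum_j\frac{t^j}{j!}\mU_j$. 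The reverse direction (b) via Lemma~\ref{lem:fin-index} and Proposition~\ref{prop:fin-index} is also the paper's route.

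\textbf{Genuine gap in the upper bound.} Your Cauchy--Schwarz treatment of the near-kernel regime does not close. Fix $\delta>0$ and consider $x$ with $\lambda_x=\|\sqrt{\bR}x\|^2\le\delta$. The $n=1$ term in your series is $2t^2\Re\langle\sqrt{\bR}x,\sqrt{\bR}\bC x\rangle$, which Cauchy--Schwarz bounds by $2t^2\sqrt{\delta}\,\|\sqrt{\bR}\bC\|$ --- a \emph{positive} quantity of order $t^2\sqrt{\delta}$ that \emph{dominates} the target $-c_2t^3$ term for all $t$ sufficiently small relative to $\sqrt{\delta}$. So for any fixed $\delta>0$, the desired estimate $g_x(t)\le-c_2t^3+C_2t^4$ fails on a nonempty time interval near $0$. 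The paper avoids this by invoking the sum-of-squares identity (Lemma~\ref{lm:sum-of-squares}, from~\cite[Lemma A.2]{AAC2}): the $t^1,\dots,t^{2m_{HC}}$ ``layers'' combine with part of the tail into $\le 0$ expressions of the form $-2t\,\|\sqrt{\bC_H}(\sum_k\cdots(-t\bC)^k)x\|^2$, so those positive cross terms never appear. This algebraic identity is not a refinement of your bound --- it is a different, essential mechanism that your proposal cannot substitute Cauchy--Schwarz for.

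\textbf{Other gaps.} (i) Your lower-bound test vector is built from an $x_0$ with $\sqrt{\bR}\bC^jx_0=0$ exactly for $j<m_{HC}$; as Remark~\ref{remark_unterschied}(iii) notes, such $x_0$ need not exist in infinite dimensions (only $\varepsilon$-approximate ones do), and handling the resulting error terms is the entire content of Lemma~\ref{refined} --- you acknowledge this but do not address it, and it is where most of the work is. (ii) You propose perturbing ``in the direction of $\sqrt{\bR}\bC^{m_{HC}}x_0$''; this does not reproduce the constant. The correct directions are $\bC^\ell x_0$, $\ell=1,\dots,m_{HC}$, with coefficients $b_\ell\tau^\ell$ (cf.~\eqref{starr}); for $m_{HC}=1$ one checks directly that $x_\tau=x_0+b_1\tau\bC x_0$ with $b_1=\tfrac12$ turns your naive $\frac{2}{3(m_{HC}!)^2}$ into the correct $\frac{1}{6}$, whereas a perturbation along $\sqrt{\bR}\bC x_0$ introduces terms like $\bR\bC x_0$ that do not combine cleanly. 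Your observation that the discrepancy factor is $\binom{2m_{HC}}{m_{HC}}^2$ is correct, but the proposed perturbation direction would not recover it.
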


\makered{
\begin{remark}
In \cite{GaMi13} the short time decay behavior of a kinetic Fokker--Planck equation on the torus in $x$ was computed as $1-\frac{t^3}{12}+o(t^3)$. 
Again, in Fourier space and by using a Hermite function basis in velocity, this model can be written as an (infinite dimensional) conservative-dissipative system with hypocoercivity index 1 (see \S2.1 of \cite{GaMi13}). 
In that paper it was also mentioned that the decay exponent in~\eqref{short-t-decay:sandwich} can be seen as some ``order of hypocoercivity'' of the generator.
\end{remark}
}

We will split the proof into several parts: The upper and lower bounds in \eqref{short-t-decay:sandwich} will be proven in \S\ref{sec:4-upper} and \S\ref{sec:4-lower}, respectively, and the reverse direction (b) will be proven in \S\ref{sec:reverse}. First we shall make some remarks and discuss consequences of these decay bounds.

\begin{remark}\label{remark_unterschied}
\begin{enumerate}[label=(\roman*)]
\item As can be seen from the proofs below (in particular, inequality \eqref{est:g:C0:final}), the constants $c_1, c_2$ in \eqref{short-t-decay:sandwich} depend on the time interval $t \in [0,t_0]$ and they can be made arbitrarily close to each other (and hence to $c$) for $t_0 \to 0$. Because we have no information about the speed of convergence of $c_1,c_2$ to $c$ (as $t_0\to0$), the remainder in \eqref{d25} is only of the form $o(t^a)$, instead of $\bigO(t^{a+1})$.
\item In the finite dimensional ODE-case, $\|e^{-\bC t}\|$ is analytic on some (short) time interval $[0,t_1)$, see \cite{AAC2}. However, in the infinite dimensional case 
this is not true in general, as was recently illustrated in a counterexample by G.M.\ Graf \cite{Graf}. 
But in concrete examples where $\|\mP(t)\|$ is analytic on $[0,t_0)$ for some $t_0>0$ or just $a$ times differentiable at $t=0$, then~\eqref{short-t-decay:sandwich} would already imply that the propagator norm satisfies \eqref{d25} for $t\in[0,t_0]$ (and not just in the limit $t\to 0$), and some $c\in\R$ such that $0 <c_2 \leq c \leq c_1$.  
\item \label{remark_unterschied_3} Our proofs below are based on the corresponding results in \cite{AAC2}, but with a significant technical difference: In the finite dimensional case, existence of the hypocoercivity index $m_{HC}$ of an operator $\bC$ means that the operator on the left-hand side of \eqref{Op:hypocoercive:kappa} for $m=m_{HC}-1$ has a non-trivial kernel (because $m_{HC}$ is defined as the \emph{minimal} value of $m$ such that \eqref{Op:hypocoercive:kappa} is satisfied). But in the infinite-dimensional setting, the failure of the coercivity estimate \eqref{Op:hypocoercive:kappa} to hold for this $m=m_{HC}-1$ translates only into the weaker condition:
\begin{equation}
\label{HC_failure}
\forall \e>0\ \exists x_0=x_0(\e) \in \cH\setminus\{0\}\ \forall j = 0,\dotsc,m_{HC}-1: \norm{\sqrt{\bC_H} \bC^j x_0} \le \e \norm{x_0}\,. 
\end{equation}
Thus, Lemma \ref{refined} below will deal with initial conditions $x_0$ such that $\norm{\sqrt{\bC_H} \bC^j x_0}$ is small for $j=0,...,\mHC-1$, while the analogous (and technically simpler)  Lemma A.4 in \cite{AAC2} may assume that all these $\sqrt{\bC_H} \bC^j x_0$ vanish.
\end{enumerate}
\end{remark}

In the terminology of semigroup theory, an operator $\mC\in\cB(\cH)$ is \emph{hypocoercive} if it generates a uniformly exponentially stable semigroup $\big(e^{-\mC t}\big)_{t\geq 0}$.
Recall the following 
characterizations of uniformly exponentially stable semigroups:
\begin{proposition} [{\cite[Proposition V.1.7]{EngelNagel2000}}]
\label{prop:UniExpStableSG}
For a strongly continuous semigroup~$\big(\mP(t)\big)_{t\geq 0}$, the following assertions are equivalent: 
\begin{enumerate}[label=(\alph*)]
 \item 
$\omega_0 \coleq \displaystyle \inf \big\{\omega\in\R\:\big|\: \lim_{t\to\infty} e^{-\omega t}\|\mP(t)\| =0 \big\} <0$, i.e., $\big(\mP(t)\big)_{t\geq 0}$ is uniformly exponentially stable.
 \item
$\lim_{t\to\infty} \|\mP(t)\| =0$, i.e., $\big(\mP(t)\big)_{t\geq 0}$ is uniformly stable.
 \item \label{prop:UniExpStableSG:c}
$\|\mP(t_0)\|<1$ for some $t_0>0$. 
\end{enumerate}
\end{proposition}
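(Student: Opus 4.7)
The plan is to establish the cyclic chain (a) $\Rightarrow$ (b) $\Rightarrow$ (c) $\Rightarrow$ (a). The first two implications are essentially immediate from the definitions, so the technical content lives entirely in (c) $\Rightarrow$ (a).

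For (a) $\Rightarrow$ (b), I would pick any $\omega \in (\omega_0, 0)$. By definition of $\omega_0$, we have $e^{-\omega t}\|\mP(t)\| \to 0$ as $t\to\infty$, and since $\omega<0$ also $e^{\omega t}\to 0$. Writing $\|\mP(t)\| = e^{\omega t}\cdot (e^{-\omega t}\|\mP(t)\|)$, both factors go to $0$, yielding $\|\mP(t)\|\to 0$. The implication (b) $\Rightarrow$ (c) is trivial: if $\|\mP(t)\|\to 0$, then $\|\mP(t_0)\|<1$ for all sufficiently large $t_0$.

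The main step is (c) $\Rightarrow$ (a). Fix $t_0>0$ with $q \coleq \|\mP(t_0)\| < 1$. The first ingredient is the semigroup property $\mP(nt_0)=\mP(t_0)^n$ for $n\in\N_0$, which gives $\|\mP(nt_0)\|\le q^n$. The second ingredient is local boundedness of $s\mapsto \|\mP(s)\|$ on $[0,t_0]$, i.e., $M \coleq \sup_{s\in[0,t_0]}\|\mP(s)\|<\infty$. For any $t\ge 0$ write $t = nt_0 + s$ with $n\in\N_0$ and $s\in[0,t_0)$; then
\[
\|\mP(t)\| \;=\; \|\mP(s)\mP(t_0)^n\| \;\le\; M q^n \;\le\; \tfrac{M}{q}\, q^{t/t_0} \;=\; \tfrac{M}{q}\, e^{\omega t},
\]
where $\omega \coleq t_0^{-1}\log q < 0$. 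Choosing any $\omega' \in (\omega, 0)$, we have $e^{-\omega' t}\|\mP(t)\| \le (M/q)\, e^{(\omega-\omega')t}\to 0$, so $\omega_0 \le \omega' < 0$.

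The principal obstacle is the local boundedness claim $M<\infty$. This is the one place where the proof is not purely formal: it relies on strong continuity, which yields pointwise boundedness $\sup_{s\in[0,t_0]}\|\mP(s)x\|<\infty$ for each $x\in\cH$ (because $s\mapsto \mP(s)x$ is continuous on the compact interval $[0,t_0]$), and then on the Banach--Steinhaus uniform boundedness principle to upgrade this to uniform boundedness of the operator norms. With that standard fact in hand, the geometric-decay argument above closes the equivalence.
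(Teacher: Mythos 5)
Your proof is correct and is exactly the standard textbook argument (the one in Engel--Nagel, Proposition V.1.7, or Pazy): (a) $\Rightarrow$ (b) $\Rightarrow$ (c) are immediate, and (c) $\Rightarrow$ (a) combines local boundedness of $\|\mP(s)\|$ on $[0,t_0]$ (strong continuity plus Banach--Steinhaus) with the semigroup property to obtain geometric decay along $nt_0$ and hence an exponential envelope. The paper itself does not reprove this result but simply cites the reference, so there is no divergence to discuss.
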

\begin{remark}\label{UC_SG}
For a uniformly continuous semigroup~$(\mP(t))_{t\geq 0}$ and its (bounded) generator~$\bA$ one has $\omega_0=s(\bA)$, where $s(\bA):=\sup\{\Re\lambda:\ \lambda\in\sigma(\bA)\}$ is called the \emph{spectral bound of~$\bA$}, see e.g.~\cite[Corollary IV.2.4]{EngelNagel2000}. \end{remark}
Combining Theorem~\ref{thm:short-t-decay:sandwich}, Proposition~\ref{prop:UniExpStableSG}(c), and Lemma~\ref{lem:fin-index} yields the following key result:

\begin{theorem}\label{cor:exp-stable} 
Let the operator~$\mC\in\cB(\cH)$ be accretive. Then $\mC$ is hypocoercive (or, equivalently, it generates a uniformly exponentially stable $C_0$-semigroup~$\big(e^{-\mC t}\big)_{t\geq 0}$) if and only if it has a hypocoercivity index $\mHC\in\N_0$ (see Definition~\ref{def:op:HC-index} / satisfying one of the conditions in Lemma \ref{lem:Op-Equivalence}).
\end{theorem}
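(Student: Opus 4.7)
The plan is to prove the two implications separately, using the tools already at hand. Both directions hinge on the characterization of uniform exponential stability given by Proposition~\ref{prop:UniExpStableSG}(c), namely that $\|\mP(t_0)\|<1$ for some $t_0>0$ is equivalent to uniform exponential stability of $(\mP(t))_{t\ge 0}$.

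\textbf{Finite HC index implies hypocoercivity.} Assuming $\bC$ has hypocoercivity index $\mHC\in\N_0$, I invoke Theorem~\ref{thm:short-t-decay:sandwich}(a) to obtain constants $c_2>0$ and $C_2,t_0>0$ such that
\[
\|\mP(t)\| \;\le\; 1 - c_2\, t^{a} + C_2\, t^{a+1},\qquad t\in[0,t_0],
\]
with $a=2\mHC+1$. For $t$ sufficiently small (specifically, once $C_2 t < c_2$), the right-hand side is strictly less than $1$; hence $\|\mP(t_1)\|<1$ for some $t_1\in(0,t_0]$. Proposition~\ref{prop:UniExpStableSG}(c) then immediately yields uniform exponential stability of $(\mP(t))_{t\ge 0}$, which is the definition of hypocoercivity.

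\textbf{Hypocoercivity implies finite HC index.} For the converse I argue by contraposition, relying on Lemma~\ref{lem:fin-index}. Suppose that $\bC$ has no finite hypocoercivity index. Then by Lemma~\ref{lem:Op-Equivalence} none of its equivalent conditions can hold for any $m\in\N_0$; in particular the coercivity estimate~\eqref{Op:hypocoercive:kappa} holds for no $m$ and no $\kappa>0$. Lemma~\ref{lem:fin-index} is then used to convert this structural failure into the quantitative conclusion $\|\mP(t)\|=1$ for all $t\ge 0$, which by the contrapositive of Proposition~\ref{prop:UniExpStableSG}(c) rules out uniform exponential stability and hence hypocoercivity. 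Combining the two implications gives the claimed equivalence.

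\textbf{Main obstacle.} The whole difficulty lies in the converse direction and is already visible in Remark~\ref{remark25}\ref{remark25.3}: when $\bR$ is compact and $\cH$ is infinite-dimensional, the algebraic span $\Span\bigl(\bigcup_{j=0}^\infty \im(\bJ^j\sqrt\bR)\bigr)$ can have closure equal to $\cH$ while never actually equalling $\cH$, and then there is no closed nonzero $\bJ$-invariant subspace of $\ker\bR$ on which $e^{-\bC t}$ could act unitarily. The clean orthogonality argument that handles Proposition~\ref{propii}\ref{p3'} is thus unavailable. Instead one must exploit the \emph{approximate} failure of the coercivity condition via~\eqref{HC_failure}, building sequences of unit vectors $(x_n)$ for which $\|\sqrt\bR\,\bJ^j x_n\|$ is small for an increasing range of~$j$, and showing that along such sequences $\|\mP(t)x_n\|\to 1$ for every fixed $t\ge 0$. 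Supplying this quantitative obstruction is precisely the role assigned to Lemma~\ref{lem:fin-index}, and it is the technical heart of the theorem.
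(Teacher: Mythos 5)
Your proof is correct and mirrors the paper's argument exactly: the paper derives the theorem by ``combining Theorem~\ref{thm:short-t-decay:sandwich}, Proposition~\ref{prop:UniExpStableSG}(c), and Lemma~\ref{lem:fin-index},'' which is precisely the structure you supply (the forward direction from the short-time decay bound and Proposition~\ref{prop:UniExpStableSG}(c), the converse from Lemma~\ref{lem:fin-index}). Your contrapositive phrasing of the converse --- using accretivity to upgrade ``$\|\mP(t)\|\ge 1$'' to ``$\|\mP(t)\|=1$'' --- is a harmless reformulation of a direct application of Lemma~\ref{lem:fin-index}, and the commentary in your final paragraph correctly identifies \eqref{HC_failure} and the approximate failure of coercivity as the technical content that Lemma~\ref{lem:fin-index} handles.
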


\subsection{Upper bound of the propagator norm}\label{sec:4-upper}

For establishing the upper and lower bounds of Theorem \ref{thm:short-t-decay:sandwich} we introduce some notation which will be used in the proofs below. Given any fixed $\bC \in \cB(\cH)$, we set
\begin{align}\label{A:Q}
\mQ(t)
& \coleq e^{-\mC^* t}e^{-\mC t} = \sum_{j=0}^{\infty} \frac{t^j}{j!}\ \mU_j \qquad \textrm{for }t \ge 0, \\
\textrm{with }\quad \mU_j & =(-1)^j \sum_{k=0}^j \binom{j}{k} (\mC^*)^k \mC^{j-k}   \qquad \textrm{for }j \in \N_0, \label{def-Uj}\\
g(x; t) &\coleq \langle x, \sum_{j=1}^\infty \frac{t^j}{j!} \mU_j x \rangle
= \|e^{-\bC t}x\|^2-\|x\|^2 \qquad \textrm{ for }t\ge 0 \textrm{ and }x \in \cH. \label{def-g}
\end{align}  

Note that, because $\|\mU_j\| \leq (2 \|\mC\|)^j$ for $j\in\N_0$, the series defining $\mQ(t)$ converges uniformly on bounded time intervals.

\makered{
To derive the upper bound for the propagator norm, we will use the following identity from \cite[Lemma A.2]{AAC2}:
\begin{lemma} \label{lm:sum-of-squares}
Let $\mU, \mV, \mW\in\cB(\cH)$. 
For all $m\in\N_0$, the following identity holds:
\begin{equation} \label{id:sum-of-squares}
 \begin{split}
&\sum_{j=1}^{\infty} \frac{t^j}{j!} \sum_{k=0}^{j-1} \tbinom{j-1}{k} \mU^k \mV \mW^{j-k-1}
\\
&= \sum_{j=0}^m \frac{t^{2j+1}}{(2j+1)!} \frac1{\binom{2j}{j}}
       \bigg( \sum_{k=0}^{\infty} \tfrac{(2j+1)!}{(k+2j+1)!} \tbinom{k+j}{j} t^k \mU^{k+j} \bigg) \mV
       \bigg( \sum_{\ell=0}^{\infty} \tfrac{(2j+1)!}{(\ell+2j+1)!} \tbinom{\ell+j}{j} t^\ell \mW^{\ell+j} \bigg) 
\\
&\quad +\sum_{j=2m+3}^{\infty} \frac{t^j}{j!} \sum_{k=m+1}^{j-m-2} \tbinom{j-1}{k} \Delta^{(m+1)}_{j,k} \mU^k \mV \mW^{j-k-1} \,,
 \end{split}
\end{equation}
where $\Delta^{(m)}_{j,k} \coleq \frac{\binom{k}{m} \binom{j-k-1}{m}}{\binom{k+m}{m} \binom{j-k-1+m}{m}}$ for all $m\leq \min(k, j-k-1)$.
\end{lemma}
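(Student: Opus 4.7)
The plan is to prove the identity by comparing coefficients of $t^J \mU^K \mV \mW^L$ on both sides. Since $\mU, \mV, \mW$ appear only through monomials of the form $\mU^K \mV \mW^L$, and both sides are absolutely convergent power series in $t$ on a neighborhood of zero, I may treat $\mU, \mV, \mW$ as formally non-commuting indeterminates and verify equality of the coefficient of $t^J \mU^K \mV \mW^L$ for every $K, L \geq 0$ and $J = K+L+1$.

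Extracting these coefficients, the LHS yields $\binom{K+L}{K}/(K+L+1)!$. For the first sum on the RHS, the substitution $(k,\ell) = (K-i, L-i)$ in the $i$-th summand shows that contributions to $\mU^K \mV \mW^L$ arise only for $0 \leq i \leq \min(K,L,m)$, and they total
\[
S_m(K,L) \coleq \sum_{i=0}^{\min(m,K,L)} \frac{(2i+1)!\,\binom{K}{i}\binom{L}{i}}{\binom{2i}{i}\,(K+i+1)!(L+i+1)!}.
\]
For the remainder, taking $j = K+L+1$ and $k = K$ in $\Delta^{(m+1)}_{j,k}$ yields
\[
R_m(K,L) \coleq \frac{\binom{K+L}{K}}{(K+L+1)!}\cdot\frac{\binom{K}{m+1}\binom{L}{m+1}}{\binom{K+m+1}{m+1}\binom{L+m+1}{m+1}},
\]
and the range restrictions $J \geq 2m+3$ and $m+1 \leq K \leq J-m-2$ are enforced automatically, since $\binom{K}{m+1}$ or $\binom{L}{m+1}$ vanishes when $K < m+1$ or $L < m+1$. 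The lemma therefore reduces to the scalar identity $\binom{K+L}{K}/(K+L+1)! = S_m(K,L) + R_m(K,L)$.

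I would establish this scalar identity as a telescoping sum. The key claim is that for every $i \geq 0$,
\[
\frac{\binom{K+L}{K}}{(K+L+1)!}\!\left[\frac{\binom{K}{i}\binom{L}{i}}{\binom{K+i}{i}\binom{L+i}{i}} - \frac{\binom{K}{i+1}\binom{L}{i+1}}{\binom{K+i+1}{i+1}\binom{L+i+1}{i+1}}\right] = \frac{(2i+1)!\,\binom{K}{i}\binom{L}{i}}{\binom{2i}{i}\,(K+i+1)!(L+i+1)!}.
\]
After clearing factorials on both sides, this collapses to the elementary algebraic identity $(K+i+1)(L+i+1) - (K-i)(L-i) = (2i+1)(K+L+1)$, which is verified by a one-line expansion. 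Summing the bracketed differences over $i = 0, \ldots, m$ telescopes to $1 - \binom{K}{m+1}\binom{L}{m+1}/[\binom{K+m+1}{m+1}\binom{L+m+1}{m+1}]$, and multiplying by $\binom{K+L}{K}/(K+L+1)!$ produces exactly $\binom{K+L}{K}/(K+L+1)! - R_m(K,L)$, yielding the desired identity. The principal obstacle is the careful bookkeeping in extracting coefficients and matching $\Delta^{(m+1)}_{J,K}$ with the stated telescoping terminus; once these identifications are made, the combinatorial core reduces to the elementary algebraic identity above.
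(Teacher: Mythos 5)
Your proof is correct. Since the paper cites this lemma from \cite[Lemma~A.2]{AAC2} and does not reproduce a proof, there is no in-paper argument to compare against directly; your argument stands on its own as a complete, self-contained proof. A few points worth recording explicitly:
\begin{itemize}
\item The reduction to comparing coefficients is legitimate because both sides are absolutely convergent operator power series in $t$ on a neighborhood of zero, and because for each fixed power of $t$ the words $\mU^K \mV \mW^L$ (with $K+L+1$ equal to that power) appearing on either side are formally independent, so the identity holds in $\cB(\cH)$ iff it holds as a formal identity in non-commuting indeterminates, iff the scalar coefficients match.
\item Your index substitution in the first RHS sum is correct: with outer index $i$ (replacing $j$), the constraints $k+i=K$, $\ell+i=L$ give $i\le\min(K,L)$, and the total $t$-power $(2i+1)+(K-i)+(L-i)=K+L+1$ is independent of $i$, as required. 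The coefficient simplification to $\frac{(2i+1)!\binom{K}{i}\binom{L}{i}}{\binom{2i}{i}(K+i+1)!(L+i+1)!}$ checks out.
\item Setting $a_i\coleq\frac{\binom{K}{i}\binom{L}{i}}{\binom{K+i}{i}\binom{L+i}{i}}=\frac{(K!)^2(L!)^2}{(K-i)!(K+i)!(L-i)!(L+i)!}$, one indeed computes $a_i-a_{i+1}=\frac{(K!)^2(L!)^2\,[(K+i+1)(L+i+1)-(K-i)(L-i)]}{(K-i)!(K+i+1)!(L-i)!(L+i+1)!}$, and the bracket expands to $(2i+1)(K+L+1)$ exactly as you claim. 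Multiplying by $\binom{K+L}{K}/(K+L+1)!$ and simplifying reproduces the RHS of your telescoping claim. Summing from $i=0$ to $m$ gives $\frac{\binom{K+L}{K}}{(K+L+1)!}(1-a_{m+1})$, and $a_{m+1}$ matches $\Delta^{(m+1)}_{K+L+1,K}$, so $S_m+R_m=\frac{\binom{K+L}{K}}{(K+L+1)!}$.
\item The boundary cases are handled correctly: when $\min(K,L)\le m$, the terms in $S_m$ with $i>\min(K,L)$ vanish because $\binom{K}{i}\binom{L}{i}=0$, and $a_{m+1}=0$, so $R_m=0$; this is consistent with the sharper summation ranges ($j\ge 2m+3$, $m+1\le k\le j-m-2$) in the stated lemma.
\end{itemize}
The approach via coefficient extraction and a single telescoping identity is more elementary than the induction on $m$ one would otherwise expect, and it makes transparent \emph{why} the remainder has the particular $\Delta^{(m+1)}_{j,k}$ structure: it is exactly the telescoping terminus $a_{m+1}$.
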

}

We begin by proving the following upper bound for the propagator norm:
\begin{proposition}\label{prop:short-t-decay:upper}
Let the operator~$\mC\in\cB(\cH)$ be accretive with hypocoercivity index $\mHC\in\N_0$.
Then, there exist $t_2,c_2,C_2>0$ such that 
\begin{equation} \label{est:Propagator}
 \| e^{-\mC t} \|
\leq 1 -c_2 t^a  + C_2 t^{a+1}
\qquad \text{for } t \in [0, t_2],
\end{equation}
where $a =2 \mHC +1$. More precisely, we have
\begin{equation} \label{limsup:c2_tilde}
\limsup_{t \to 0} \frac{\norm{e^{-\bC t}} - 1 }{t^{2m_{HC}+1}} \le - \tilde c_2 
\end{equation}
with $\tilde c_2=c$ from \eqref{def-c}: 
\small
\begin{equation} \label{c2_tilde}
 \tilde c_2
\coleq
 \frac{\lim\limits_{\delta\to 0}\ \Big[ \inf \big\{\norm{\sqrt{\bC_H}\bC^{m_{HC}} x}^2 : \\ x\in\sphere,\ 
 \norm{\sqrt{\bC_H}\bC^j x}^2
\le \delta\ \forall j=0,\dotsc,m_{HC}-1 \big\} \Big]}{(2m_{HC}+1)!\, \binom{2m_{HC}}{m_{HC}}}\,. 
\end{equation}
\normalsize
\end{proposition}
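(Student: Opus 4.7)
The plan is to start from the identity
\[
\|e^{-\bC t}\|^2 = \sup_{x\in\sphere} \|e^{-\bC t}x\|^2 = 1 + \sup_{x\in\sphere} g(x;t),
\]
so that the task reduces to producing an upper bound of the form $\sup_{x\in\sphere} g(x;t) \le -(2\tilde c_2 -\e)\,t^{2m_{HC}+1} + O(t^{2m_{HC}+2})$ for every $\e>0$ and all sufficiently small $t$, followed by the elementary estimate $\sqrt{1-y}\le 1-y/2$ to pass from $\|e^{-\bC t}\|^2$ back to $\|e^{-\bC t}\|$.

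The key algebraic step (as in \cite[Appendix~A]{AAC2}) is to rewrite $\mU_j$ from~\eqref{def-Uj} via the telescoping identity $\mU_j = -2\sum_{k=0}^{j-1}\binom{j-1}{k}(-\mC^*)^k \bR (-\mC)^{j-1-k}$ (using $\mC+\mC^* = 2\bR$), so that the series $\sum_{j\ge 1}\frac{t^j}{j!}\mU_j$ matches the left-hand side of Lemma~\ref{lm:sum-of-squares} with $\mU=-\mC^*$, $\mV=\bR$, $\mW=-\mC$ and $m=m_{HC}$. Taking the inner product with $x$ then produces
\[
g(x;t) = -2\sum_{j=0}^{m_{HC}} \frac{t^{2j+1}}{(2j+1)!\binom{2j}{j}}\,\|\sqrt\bR\,\Psi_j(t)x\|^2 + R(x;t),
\]
where $\Psi_j(t) \coleq \sum_{\ell\ge 0} \frac{(2j+1)!}{(\ell+2j+1)!}\binom{\ell+j}{j}(-t)^\ell \mC^{\ell+j}$ is an operator-valued power series with $\Psi_j(0)=\mC^j$ and $\|\Psi_j(t)-\mC^j\|\le C\,t$ on a bounded interval $[0,t_2]$, and $|R(x;t)| \le C'\,t^{2m_{HC}+3}$ uniformly for $x\in\sphere$ (by the tail estimate in Lemma~\ref{lm:sum-of-squares} together with $\|\mC\|<\infty$).

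The heart of the proof is a lower bound of the form $\sum_{j=0}^{m_{HC}}\frac{t^{2j+1}}{(2j+1)!\binom{2j}{j}}\|\sqrt\bR\,\Psi_j(t)x\|^2 \ge (\tilde c_2 - \e)\,t^{2m_{HC}+1}$, uniformly in $x\in\sphere$ for small $t$. I would obtain it by a dichotomy, with a threshold $\delta>0$ chosen depending on $\e$: if $\|\sqrt\bR\,\mC^{j_0}x\|^2 \ge \delta$ for some $j_0<m_{HC}$, then (after absorbing the $O(t)$ perturbation $\Psi_{j_0}(t)-\mC^{j_0}$ into the constant) the $j_0$-term alone dominates by a constant multiple of $\delta\,t^{2j_0+1}\gg t^{2m_{HC}+1}$ as $t\to 0$; otherwise $\|\sqrt\bR\,\mC^j x\|^2<\delta$ for every $j<m_{HC}$, and the very definition~\eqref{c2_tilde} of $\tilde c_2$ then forces $\|\sqrt\bR\,\mC^{m_{HC}}x\|^2$ to be as close to $\tilde c_2\,(2m_{HC}+1)!\binom{2m_{HC}}{m_{HC}}$ as we wish provided $\delta$ is taken small enough, so the $j=m_{HC}$-term yields the required $(2\tilde c_2-\e)\,t^{2m_{HC}+1}$. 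Combining both cases with the $R$-estimate yields~\eqref{est:Propagator}; letting $\e\to 0$ delivers the sharp statement~\eqref{limsup:c2_tilde}.

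The main technical obstacle is precisely this second case of the dichotomy. In the finite-dimensional analogue in \cite[Lemma A.4]{AAC2} one argues on the compact unit sphere and the infimum is attained, reducing the problem to an explicit minimum over $\{x\in\sphere : \sqrt\bR\,\mC^j x=0\ \forall j<m_{HC}\}$. Here $\sphere$ is not compact and, as emphasised in Remark~\ref{remark_unterschied}\ref{remark_unterschied_3} and~\eqref{HC_failure}, one can in general only arrange $\|\sqrt\bR\,\mC^j x\|$ to be \emph{small}, not zero. The definition~\eqref{c2_tilde} of $\tilde c_2$ as a $\delta\to 0$ limit is tailored exactly so that the above dichotomy goes through without requiring any minimising sequence to converge, and this is the reason why the remainder in~\eqref{d25} can only be asserted to be $o(t^a)$ rather than $O(t^{a+1})$ in general.
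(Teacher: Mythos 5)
Your argument is correct and follows essentially the same route as the paper's proof: expand $\mQ(t)=e^{-\bC^*t}e^{-\bC t}$, rewrite $\mU_j$ via~\eqref{eq1}, apply Lemma~\ref{lm:sum-of-squares}, establish the decay of $g(x;t)$ through a $\delta$-dichotomy between $x\in\sphere$ for which some $\|\sqrt{\bC_H}\bC^{j_0}x\|^2\ge\delta$ with $j_0<m_{HC}$ and those for which all such norms are $<\delta$, and finish by letting $\delta\to0$ and passing from $\|\cdot\|^2$ to $\|\cdot\|$. The only cosmetic deviations are that you invoke the lemma with $m=m_{HC}$ (so the crucial coefficient of $t^{2m_{HC}+1}$ appears as a manifest square) while the paper takes $m=m_{HC}-1$ and reads it off the first tail term, and that you treat general $m_{HC}$ whereas the paper writes out only $m_{HC}=1$ with a pointer to~\cite{AAC2} for the general case.
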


\medskip
\begin{proof}
When $\delta$ decreases in the definition of $\tilde c_2$, the infima are taken over gradually smaller sets. Hence, these infima increase with $\delta\to0$, yet they are uniformly bounded by $\|\bC\|^{2\mHC+1}/[(2m_{HC}+1)!\, \binom{2m_{HC}}{m_{HC}}]$. So, $\tilde c_2<\infty$ is well-defined. 

Consider
$\| e^{-\mC t} \|^2 =\sup_{\|x\|=1} \ip{x}{\mQ(t)x}$ for $t \ge 0$ and recall that $\mU_0 =\mI$ and $\mU_1 =-2\mCH$.
If $\mHC=0$ then $\mC$ is coercive such that $\bC_H\geq\kappa\bI$ for some $\kappa>0$.
Thus, for all $x\in\cH$ with $\norm{x}=1$, we derive the estimate 
\begin{align*}
\langle x, \mQ(t) x \rangle &= \norm{x}^2 - 2 t \langle x, \bC_H x \rangle + \langle x, \sum_{j=2}^\infty \frac{t^j}{j!} \mU_j x \rangle \\
& \le 1 - 2 \kappa t + M_1 t^2 \qquad \forall t \in [0,1]
\end{align*}
for some constant $M_1 > 0$. Consequently, estimate~\eqref{est:Propagator} with $a=1$ and $c_2 = \kappa$ holds. 

Now let $\mHC \ge 1$, with $\kappa>0$ as in \eqref{Op:hypocoercive:kappa}.  For simplicity, we treat only the case $\mHC=1$; the generalization to operators with hypocoercivity index $\mHC\geq2$ is done similarly as in \cite[Lemma A.1]{AAC2} and hence is not  detailed here. To compute $\| e^{-\mC t} \|^2$ we consider $g(x;t)$ as in \eqref{def-g} for $t \ge 0$ and $x\in\sphere$.  
We define
\begin{equation} \label{lambda_x:mu_x:mHC_1}
 \lambda_x \coleq\ip{x}{\mCH x}\geq 0\ , \quad
 \mu_x \coleq\ip{x}{\mC^* \mCH \mC x}\geq 0\,,
\end{equation}
and note that $\lambda_x +\mu_x \geq \kappa >0$ for $x \in \sphere$. Fixing some $\delta \in (0, \kappa)$ for the moment, we first consider $x\in\sphere$ with $\lambda_x \leq \delta$. We use the identity (see \cite[(A.6)]{AAC2})
\begin{equation}\label{eq1}
\mU_j = (-1)^j 2 \sum_{k=0}^{j-1} \binom{j-1}{k} (\bC^*)^k \bC_H \bC^{j-1-k}, \qquad j \in \N
\end{equation}
and Lemma~\ref{lm:sum-of-squares} with $\mU=(-\mC)^*$, $\mV=\mU_1$, $\mW=-\mC$ and $m=0$ to derive
\begin{equation} \label{est:g:C0:tilde}
\begin{split}
 g(x;t)
 &= \big\langle x, \Big( \sum_{j=1}^{\infty} \frac{t^j}{j!}\ \mU_j \Big) x \big\rangle 
\\
 &= \sum_{j=1}^{\infty} \frac{t^j}{j!} \sum_{k=0}^{j-1} \tbinom{j-1}{k} \big\langle (-\mC)^k x, \mU_1 (-\mC)^{j-k-1} x  \big\rangle
\\
 &= t \big\langle \sum_{k=0}^{\infty} \frac{1}{(k+1)!} t^k (-\mC)^k x,  \mU_1
 \sum_{\ell=0}^{\infty} \frac{1}{(\ell+1)!} t^\ell (-\mC)^{\ell} x \big\rangle
\\
 &\quad +\frac{t^3}{3!} \tbinom{2}{1} \tfrac14 \big\langle (-\mC) x, \mU_1 (-\mC) x \big\rangle 
\\
 &\quad +\sum_{j=4}^{\infty} \frac{t^j}{j!} \sum_{k=1}^{j-2} \binom{j-1}{k} \Delta^{(1)}_{j,k} \big\langle (-\mC)^k x, \mU_1 (-\mC)^{j-k-1} x \big\rangle
\\
 &\leq \tfrac{1}{3!\, 2} \langle \mC x, \mU_1 \mC x \rangle \ t^3 +M_2 t^4 
\\
 &= -\tfrac{1}{3!} \mu_x t^3 +M_2 t^4 \qquad \forall t \in [0,1]
\end{split}
\end{equation}
for some constant $M_2>0$. 
Note that, in the estimate of \eqref{est:g:C0:tilde}, we only use $U_1\le0$ in the first term. 
Defining
\[ \mu_\delta \coleq \inf_{x \in \sphere, \lambda_x \le \delta} \mu_x = \inf_{x \in \sphere, \lambda_x \le \delta} \langle x, \bC^* \bC_H \bC x \rangle \ge \kappa - \delta > 0 ,
\]
we then have
\begin{equation}\label{est:g:C0:tilde:final}
 g(x;t)
 \leq -\frac{\mu_\delta}{3!} t^3 +M_2 t^4
 \qquad\text{ for all $x\in\sphere$ with $\lambda_x \leq\delta$ and $t \in [0,1]$}. 
\end{equation}

\makered{On the other hand, for fixed $\delta\in(0,\kappa)$, we will show next that there exists $t_\delta>0$ such that
\begin{equation} \label{est:g:C0:final}
 g(x;t) \leq - \frac{\mu_\delta}{3!} t^3 
\qquad \text{for all $x\in\sphere$ with $\lambda_x > \delta$ and $t\in[0,t_\delta]$}
\end{equation}
holds.
To prove~\eqref{est:g:C0:final} for $x\in\sphere$ with $\lambda_x > \delta$, we estimate $g(x;t)$ in~\eqref{def-g} as follows
\begin{equation} \label{est:g:C0}
 g(x;t) \le -2 \lambda_x t +M_3 t^2 \le - \lambda_x t \le -\frac{\mu_\delta}{3!} t^3\,,
\end{equation}
where the first inequality holds for all $t \in [0,1]$ and some $M_3>0$, the second for $t \le \lambda_x / M_3$, and the third for $t \le \sqrt{\lambda_x / d}$ with $d \coleq \mu_\delta/(3!)$. Hence, estimate~\eqref{est:g:C0:final} holds with $t_\delta \coleq\min\{1, \delta/{M_3}, \sqrt{ {\delta}/d}\}$.
}

Combining \eqref{est:g:C0:tilde:final} and \eqref{est:g:C0:final}, we obtain
\begin{equation}\label{a24}
g(x; t) \le - \frac{\mu_\delta}{3!}t^3 + M_2 t^4\quad \forall x \in \sphere\ \forall t \in [0, t_\delta]
\end{equation}
which gives
\begin{equation}\label{a5}
\frac{\norm{e^{-\bC t}}^2 -1 }{t^3} \le - \frac{\mu_\delta}{3!} + M_2 t\qquad \forall t \in [0, t_\delta]\quad \forall \delta \in (0, \kappa)
\end{equation}
and hence the upper bound in \eqref{est:Propagator} with the (non-sharp) constant $c_2 = \frac{\mu_\delta}{3!\, 2}$. Regarding the second claim, from \eqref{a5} we have that
\[ \limsup_{t \to 0} \frac{\norm{e^{-\bC t}}^2-1}{t^3} \le - \frac{\mu_\delta}{3!} \quad \forall \delta \in (0,\kappa) \]
and consequently
\[ \limsup_{t \to 0} \frac{\norm{e^{-\bC t}}^2-1}{t^3} \le - \frac{1}{3!} \lim_{\delta \to 0}\ \Big[ \inf \big\{ \norm{\sqrt{\bC_H}\bC x}^2 : \\
 x \in \sphere,\ 
\norm{\sqrt{\bC_H}x}^2 \le \delta \big\}\Big]\,. \]
Note that this limit exists because the given infimum is bounded from above by $\norm{\sqrt{\bC_H}\bC}^2$ and non-decreasing for $\delta \to 0$. Expanding the square root as in $\sqrt{1+\tau} = 1 + \tau/2 + \bigO(\tau^2)$ as $\tau \to 0$ we finally obtain
\[
\limsup_{t \to 0} \frac{\norm{e^{-\bC t}}-1}{t^3} 
\le - \frac{1}{3!\, 2} \lim_{\delta \to 0}\ \Big[ \inf \big\{ \norm{\sqrt{\bC_H}\bC x}^2 : \\
 x \in \sphere,\ 
\norm{\sqrt{\bC_H}x}^2 \le \delta \big\}\Big]\,. 
\]
This finishes the proof of~\eqref{limsup:c2_tilde} in the case $\mHC=1$.
\end{proof}


\subsection{Lower bound of the propagator norm}\label{sec:4-lower}

The following lemma is a refined version of \cite[Lemma A.4]{AAC2} that will be needed for establishing the lower bound for the propagator norm. 
From the proof of that lemma we know that
\begin{equation} \label{Delta_mjk}
  \Delta_{j,k}^{(\mHC)}\le1\qquad \mbox{for } 0\le\mHC\le k\le j-\mHC-1.
\end{equation}


\begin{lemma}\label{refined}
Let $\bC$ be hypocoercive with hypocoercivity index $m_{HC} \in \N$. Then there exist constants $b_\ell\in\R$, $\ell = 1, \dotsc ,m_{HC}$ depending only on $m_{HC}$ and $\tau_0  \in (0,\min(1,1 / \norm{\bC})]$ such that for each $\e>0$ and each $x_0 \in \bigcap_{j=0}^{m_{HC}-1} \{ x \in \cH:\ \norm{\sqrt{\bC_H}\bC^j x} \le \e\}$ the polynomial function $x_\tau\,:\,[0,1]\ni\tau\mapsto x_\tau\in\cH$, given by
\begin{equation}\label{starr}
x_\tau := x_0 + \sum_{\ell=1}^{m_{HC}} b_\ell \tau^\ell \bC^\ell x_0
\end{equation}
satisfies
\begin{equation}\label{gminusc}
\Abso{ g(x_\tau; \tau) + 2 c_1(x_0)\tau^{2m_{HC}+1}} \le  D_1 \tau \e^2   + D_2\tau^{2m_{HC}+2} \norm{x_0}^2
\end{equation}
for $\tau \in[0,\tau_0]$. Moreover, the constants $D_1, D_2>0$ depend only on $\bC$, and 
\begin{equation}\label{c1-def}
c_1(x_0) \coleq \frac{\norm{\sqrt{\bC_H} \bC^{m_{HC}} x_0}^2}{(2m_{HC}+1)!\, \binom{2m_{HC}}{m_{HC}}}. 
\end{equation}
\end{lemma}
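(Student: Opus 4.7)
My plan is to follow the strategy of \cite[Lemma A.4]{AAC2}, which handles the special case $x_0\in\bigcap_{j=0}^{m_{HC}-1}\ker(\sqrt{\bC_H}\bC^j)$, and to carefully track the additional error produced when that kernel condition is relaxed to $\|\sqrt{\bC_H}\bC^j x_0\|\le\e$. The polynomial $x_\tau$ in \eqref{starr} is exactly the one used there (with the $b_\ell$ depending only on $m_{HC}$); what is new here is a more refined accounting of terms.

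The starting point is to apply Lemma~\ref{lm:sum-of-squares} with $\mU=-\bC^*$, $\mV=\mU_1=-2\bC_H$, $\mW=-\bC$ and $m=m_{HC}$. Combined with \eqref{def-g}, this rewrites $g(x;\tau)$ as
\[
g(x;\tau)=-2\sum_{j=0}^{m_{HC}}\tfrac{\tau^{2j+1}}{(2j+1)!\binom{2j}{j}}\bigl\|\sqrt{\bC_H}\,Q_j(\tau)\,x\bigr\|^2+R(x;\tau),
\]
where $Q_j(\tau):=\sum_{k\ge0}\tfrac{(2j+1)!}{(k+2j+1)!}\binom{k+j}{j}\tau^k(-\bC)^{k+j}$. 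Choosing $\tau_0:=\min(1,1/\|\bC\|)$ guarantees uniform convergence of all series on $[0,\tau_0]$, and the residual $R$ --- whose lowest power of $\tau$ is $2m_{HC}+3$ --- can be controlled via \eqref{Delta_mjk} and the bound $\|x_\tau\|\le M\|x_0\|$ to give $|R(x_\tau;\tau)|\le D_R\tau^{2m_{HC}+3}\|x_0\|^2$ on $[0,\tau_0]$.

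The key step is to substitute $x=x_\tau$ and regroup each $\sqrt{\bC_H}Q_j(\tau)x_\tau$ as a series in $\tau$ whose $\tau^p$-coefficient is a scalar (depending only on $b_0,\dots,b_{\min(p,m_{HC})}$) times $\sqrt{\bC_H}\bC^{p+j}x_0$. The choice of $b_1,\dots,b_{m_{HC}}$ from \cite[Lemma A.4]{AAC2} is designed to kill the $p=m_{HC}-j$ coefficient for every $j<m_{HC}$; this is a triangular linear system with nonzero diagonal entries, uniquely solvable, with solution depending only on $m_{HC}$. Consequently, for $j<m_{HC}$ the series $\sqrt{\bC_H}Q_j(\tau)x_\tau$ splits into two groups: the indices $p<m_{HC}-j$ contribute factors $\sqrt{\bC_H}\bC^{p+j}x_0$ of norm at most $\e$, while the indices $p>m_{HC}-j$ start at order $\tau^{m_{HC}-j+1}$ and are bounded in norm by $C\tau^{m_{HC}-j+1}\|x_0\|$. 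Hence $\|\sqrt{\bC_H}Q_j(\tau)x_\tau\|^2\le C(\e^2+\tau^{2(m_{HC}-j+1)}\|x_0\|^2)$ for $j<m_{HC}$. For $j=m_{HC}$ the leading coefficient is $\gamma_{m_{HC},0}=(-1)^{m_{HC}}$, so a Cauchy--Schwarz bound on the cross term yields $\|\sqrt{\bC_H}Q_{m_{HC}}(\tau)x_\tau\|^2=\|\sqrt{\bC_H}\bC^{m_{HC}}x_0\|^2+O(\tau\|x_0\|^2)$.

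Assembling everything: using $\tau\le\tau_0\le1$ one has $\tau^{2j+1}\e^2\le\tau\e^2$ and $\tau^{2j+1+2(m_{HC}-j+1)}=\tau^{2m_{HC}+3}\le\tau^{2m_{HC}+2}$, so the $j<m_{HC}$ contributions to $g(x_\tau;\tau)$ are bounded in absolute value by $D_1\tau\e^2+D_2'\tau^{2m_{HC}+2}\|x_0\|^2$; the $j=m_{HC}$ contribution is $-2c_1(x_0)\tau^{2m_{HC}+1}+O(\tau^{2m_{HC}+2}\|x_0\|^2)$; and the residual $R$ is absorbed into the $D_2$-term, yielding \eqref{gminusc} with constants depending only on $\bC$ and $m_{HC}$. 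The genuinely hard part would be verifying non-degeneracy of the triangular system determining the $b_\ell$, but this computation is already done in \cite[Lemma A.4]{AAC2}, and since the $b_\ell$ are independent of $\e$ and $x_0$ no new algebra is required --- the work here is purely the $\e$-bookkeeping that upgrades the earlier kernel-case estimate.
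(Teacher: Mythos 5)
Your proposal is correct and takes essentially the same route as the paper's proof: both construct the same $x_\tau$ with the same $b_\ell$ from the triangular system of \cite[Lemma~A.4]{AAC2}, both expand $g(x_\tau;\tau)$ via Lemma~\ref{lm:sum-of-squares}, and both split the series for $\sqrt{\bC_H}Q_j(\tau)x_\tau$ into the low-index terms (bounded by $\e$ via the hypothesis on $x_0$), the annihilated $i=\ell$ term, and the high-index terms (of order $\tau^{m_{HC}+1}\norm{x_0}$). The only cosmetic difference is that you apply Lemma~\ref{lm:sum-of-squares} with $m=m_{HC}$, so the leading $-2c_1(x_0)\tau^{2m_{HC}+1}$ term appears as the constant-in-$\tau$ part of the $j=m_{HC}$ block of the main sum, whereas the paper applies it with $m=m_{HC}-1$ and extracts that term from the $j=2m_{HC}+1$ piece of the residual; both bookkeepings yield the same estimate.
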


Note that in the following proofs, we explicitly keep track of the dependence of constants on given data: Writing $C=C(x,y)$,  e.g., signifies that the constant $C$ depends \emph{only} on $x$ and $y$.

\begin{proof}
{\bf Step 1 - determining the constants $b_\ell$:}\\
For $x \in \cH$ and $\tau\in[0,1]$, consider $g(x;\tau)=\langle x,\sum_{j=1}^\infty \frac{\tau^j}{j!}\mU_j x\rangle$ using $\mU_j$ in the form \eqref{eq1}.
Employing Lemma \ref{lm:sum-of-squares} with $\mU = -\bC^*$, $\mV = \mU_1$, $\mW = -\bC$ and $m=m_{HC}-1$ we obtain
\begin{align}
\nonumber g(x; \tau) &
=\sum_{j=1}^{\infty} \frac{\tau^j}{j!}\sum_{k=0}^{j-1} \binom{j-1}{k} \langle(-\mC)^k x,\, \mU_1(-\mC)^{j-1-k}x\rangle  \\ 
&= \sum_{j=0}^{m_{HC}-1} \frac{\tau^{2j+1}}{(2j+1)!} \frac{1}{\binom{2j}{j}} \, \Big\langle \sum_{k=0}^\infty \frac{(2j+1)!}{(k+2j+1)!} \binom{k+j}{j}\tau^k (-\bC)^{k+j} x, \\
\nonumber &\quad\qquad \mU_1 \sum_{k=0}^{\infty} \frac{(2j+1)!}{(k+2j+1)!}\binom{k+j}{j} \tau^k (-\bC)^{k+j} x \Big\rangle \\
\nonumber &\quad + \frac{\tau^{2m_{HC}+1}}{(2m_{HC}+1)!} \binom{2m_{HC}}{m_{HC}} \Delta^{(m_{HC})}_{2m_{HC}+1, m_{HC}} \langle (-\bC)^{m_{HC}} x, \mU_1 (-\bC)^{m_{HC}} x \rangle \\
\nonumber &\quad + \sum_{j=2m_{HC}+2}^\infty \frac{\tau^j}{j!} \sum_{k=m_{HC}}^{j-m_{HC}-1} \binom{j-1}{k} \Delta^{(m_{HC})}_{j,k} \langle (-\bC)^k x, \mU_1 (-\bC)^{j-k-1} x \rangle \,.
\end{align}
Setting $x = x_\tau$, using $\mU_1 = - 2 \bC_H$ and
\[ \binom{2m_{HC}}{m_{HC}} \Delta^{(m_{HC})}_{2m_{HC}+1, m_{HC}} = \binom{2m_{HC}}{m_{HC}}^{-1} \]
the term to be estimated in \eqref{gminusc} equals
\begin{align}
\nonumber & g(x_\tau; \tau) + 2 c_1(x_0) \tau^{2m_{HC}+1} \\
& = - 2 \sum_{j=0}^{m_{HC}-1} \frac{\tau}{(2j+1)!} \frac{1}{\binom{2j}{j}} \Norm{ \sqrt{\bC_H} \sum_{k=0}^\infty \frac{(2j+1)!}{(k+2j+1)!} \binom{k+j}{j} \tau^{k+j} (-\bC)^{k+j} x_\tau}^2 \label{o1} \\
& \quad - 2 \frac{\tau^{2 m_{HC}+1}}{(2m_{HC}+1)!\, \binom{2m_{HC}}{m_{HC}} } \left( \norm{\sqrt{\bC_H} \bC^{m_{HC}} x_\tau}^2 - \norm{\sqrt{\bC_H} \bC^{m_{HC}} x_0}^2 \right) \label{o2} \\
& \quad + \tau^{2m_{HC}+2} \sum_{j=2m_{HC}+2}^{\infty} \frac{\tau^{j-2m_{HC} - 2}}{j!} \sum_{k=m_{HC}}^{j-m_{HC}-1} \binom{j-1}{k} \Delta^{(m_{HC})}_{j,k} \langle (-\bC)^k x_\tau, \mU_1 (-\bC)^{j-k-1} x_\tau \rangle. \label{o3}
\end{align}
Given the form of \eqref{starr} we set $b_0:=1$ and determine $b_\ell$ by examining the norm in \eqref{o1} for $j = m_{HC}-\ell$, $\ell = 1, 2, \dotsc, m_{HC}$, 
with the goal to annihilate the term containing $\tau^{m_{HC}}$, while the lower order terms will turn out to be of order $\varepsilon^2$.  Abbreviating
\[ c_{\ell, k} \coleq \frac{(2(m_{HC}-\ell)+1)!}{(k+2(m_{HC}-\ell)+1)!} \binom{k+m_{HC}-\ell}{m_{HC}-\ell} \ne 0 \]
we compute the norm in \eqref{o1} for fixed $\ell$ (in $\{1,...,\mHC\}$) by rearranging the double sum:
\begin{align}
& \Norm{\sqrt{\bC_H} \sum_{k=0}^\infty c_{\ell,k} \tau^{k+m_{HC}-\ell} (-\bC)^{k+m_{HC}-\ell} \sum_{p=0}^{m_{HC}} \tau^p b_p \bC^p x_0} 
\nonumber \\
&= \tau^{m_{HC}-\ell} \Norm{\sum_{k=0}^\infty \sum_{p=0}^{m_{HC}} \tau^{k+p} c_{\ell, k} b_p \sqrt{\bC_H} \bC^{k+m_{HC}-\ell+p} (-1)^{k+\mHC-\ell}x_0} 
\nonumber \\
&= \tau^{m_{HC}-\ell} \Norm{\sum_{i=0}^\infty \tau^i \sum_{r=0}^{\min(m_{HC}, i)} c_{\ell, i-r} b_r \sqrt{\bC_H} \bC^{i+m_{HC}-\ell} (-1)^{i-r+\mHC-\ell} x_0}.  \label{dritte}
\end{align}
In the last expression, the inner sum for fixed \fbox{$i = \ell$} equals
\begin{equation}\label{innensumme}
\sum_{r=0}^{\ell} (-1)^{\mHC-r} c_{\ell, \ell-r} b_r \sqrt{\bC_H}\bC^{m_{HC}} x_0.
\end{equation}
Now \eqref{innensumme} vanishes for each $\ell=1,2,\dotsc,m_{HC}$ by choosing the $b_r$, $r = 1, \dotsc, m_{HC}$ iteratively such that
\begin{equation}\label{b-eq}
 \sum_{r=0}^\ell (-1)^{m_{HC}-r} c_{\ell, \ell-r} b_r = 0,
\qquad \ell=1, \dotsc, m_{HC}. 
\end{equation}
From this construction it is clear that the constants $b_\ell$, $\ell = 0, \dotsc, m_{HC}$ depend only on $m_{HC}$. \\

{\bf Step  2 - estimate of the remaining terms in $g(x_\tau;\tau)$:} \\
Next we estimate the remaining terms of \eqref{dritte} for $\tau \in [0,1]$:
First, we consider those terms with \fbox{$0\leq i\leq\ell-1$}, which can be bounded from above by
\begin{multline} \label{k1}
\tau^{m_{HC}-\ell} \norm{\sum_{i=0}^{\ell-1} \tau^i \sum_{r=0}^i c_{\ell, i-r} b_r \sqrt{\bC_H} \bC^{i+m_{HC}-\ell} (-1)^{i-r+\mHC-\ell} x_0} 
\\ \leq
\sum_{i=0}^{\ell-1} \tau^{m_{HC}-\ell+i} \left( \sum_{r=0}^i c_{\ell, i-r} \abso{b_r}\right) \norm{\sqrt{\bC_H} \bC^{i+m_{HC} - \ell} x_0}
\leq
C_1 \e
\end{multline}
for some $C_1 = C_1(m_{HC})>0$ by using $\|\sqrt{\mC_H} \mC^j x_0\|\le \varepsilon$ for $j=0,...,\mHC-1$ in the final estimate.

Finally, we consider those terms in~\eqref{dritte} with \fbox{$i\geq\ell+1$}. Fix any $\tau_0 < \min(1, 1/\norm{\bC})$. For $\tau \le \tau_0  < \min(1,1 / \norm{\bC})$, using an index shift $k:=i-\ell-1$, we estimate
\begin{equation} \label{k2}
\begin{split}
& \tau^{m_{HC}-\ell} \norm{\sum_{i=\ell+1}^\infty \tau^i \sum_{r=0}^{\min(m_{HC}, i)} c_{\ell, i-r} b_r \sqrt{\bC_H} \bC^{i+m_{HC} - \ell} (-1)^{i-r+\mHC-\ell} x_0}
\\
& \leq
 \tau^{m_{HC}+1} \left( \sup_{k \in \N_0} \sum_{r=0}^{\min(m_{HC}, k+\ell+1)} c_{\ell, k+\ell+1-r} \abso{b_r} \right) \, \sum_{k=0}^\infty \tau^k \norm{\sqrt{\bC_H} \bC^{m_{HC}+1+k} x_0}
\\
& \leq
 \tau^{m_{HC}+1} C_2 \norm{x_0} ,
\end{split}
\end{equation}
for some $C_2 = C_2(m_{HC}, \norm{\bC})$.

Using~\eqref{b-eq}--\eqref{k2}, we estimate~\eqref{dritte} by
$C_1 \e + \tau^{m_{HC}+1} C_2 \norm{x_0}$ for $\tau \le \tau_0  < \min(1,1 / \norm{\bC})$, where the constants $C_1$ and $C_2$ depend only on $m_{HC}$ and $\norm{\bC}$. 
All in all, this means that (the absolute value of) \eqref{o1} can be estimated by
\[ C_3 \tau \e^2 + C_4 \tau^{2m_{HC}+3} \norm{x_0}^2 \qquad \textrm{ for } \tau \le \tau_0  < \min(1,1 / \norm{\bC}) \]
with constants $C_3 =C_3(m_{HC})$ and $C_4 = C_4(m_{HC}, \norm{\bC})$.

Using (with $b_0=1$)
\begin{align*} 
  \Big|\norm{\sqrt{\bC_H} \bC^{m_{HC}} x_\tau}^2 - & \norm{\sqrt{\bC_H}\bC^{m_{HC}} x_0}^2\Big| 
  \\
  & = \Big|\| \sqrt{\bC_H} \bC^{m_{HC}} \Big( \sum_{\ell=0}^{m_{HC}} b_\ell \tau^\ell \bC^\ell x_0\Big) \|^2 - \norm{\sqrt{\bC_H}\bC^{m_{HC}} x_0}^2\Big| \\
  & \le C_5 
  \tau \norm{x_0}^2 
\end{align*}
with $C_5 = C_5(m_{HC}, \norm{\bC})$, we can estimate \eqref{o2} by
\[ 
C_6 \tau^{2m_{HC}+2} \norm{x_0}^2 \] 
with $C_6 = C_6(m_{HC}, \norm{\bC}$.

Finally, to estimate~\eqref{o3}, we first note that for $\tau \in [0,1]$,
\begin{align*}  
\norm{x_\tau}^ 2 & \le \left( \sum_{\ell=0}^{m_{HC}} \abso{b_\ell}  \underbrace{\tau^\ell}_{\le 1} \norm{\bC}^\ell \norm{x_0} \right)^2 \\
& \le \norm{x_0}^ 2 \left( \max_{\ell = 0, \dotsc, m_{HC}} \abso{b_\ell}\right)^2 \left( \sum_{\ell=0}^{m_{HC}} \norm{\bC}^\ell \right)^2 \\
& \le C_7 \norm{x_0}^2
\end{align*}
for some constant $C_7 = C_7(m_{HC}, \norm{\bC})$. Using~\eqref{Delta_mjk}, the sum in expression~\eqref{o3} can be estimated as
\begin{gather*}
\Abso{ \sum_{j=2 m_{HC} + 2}^\infty \frac{\tau^{j-2m_{HC}-2}}{j!} \sum_{k=m_{HC}}^{j-m_{HC}-1} \binom{j-1}{k} \Delta^{(m_{HC})}_{j,k} \langle (-\bC)^k x_\tau, \mU_1 (-\bC)^{j-k-1} x_\tau \rangle } \\
\le \sum_{j=2m_{HC}+2}^\infty \frac{\tau^{j-2m_{HC}-2}}{j!} \sum_{k=m_{HC}}^{j-m_{HC}-1} \binom{j-1}{k} \norm{\bC}^{j-1}  \underbrace{\norm{\mU_1}}_{\le 2 \norm{\bC}} \norm{x_\tau}^2 \\
\le \sum_{j=2 m_{HC}+2}^\infty \frac{\tau^{j-2m_{HC}-2}}{j!} 2 \norm{\bC}^j C_7 \norm{x_0}^2 \underbrace{\sum_{k=m_{HC}}^{j-m_{HC}-1} \binom{j-1}{k}}_{\le 2^{j-1}}\,.
\end{gather*}
The last series converges uniformly for $\tau \in [0,1]$ due to the ratio test, as
\[
\frac{2 \tau \norm{\bC}}{j+1} < 1
\]
holds for such $\tau$ and large $j$. So we can estimate \eqref{o3}
by
\[ C_8 \tau^{2m_{HC} +2} \norm{x_0}^2 \]
uniformly for $\tau \in [0,1]$ and some constant $C_8 = C_8 (m_{HC}, \norm{\bC})$.

Putting everything together, we now have established
\begin{align*} \abso{g(x_\tau; \tau) + 2c_1(x_0) \tau^{2m_{HC}+1}} & \le C_3 \tau \e^2 + \left( C_4 \tau^{2m_{HC}+3} + C_6 \tau^{2m_{HC}+2} + C_8 \tau^{2m_{HC}+2} \right)\norm{x_0}^2 \\
& \le D_1 \tau \e^2 + D_2 \tau^{2m_{HC}+2} \norm{x_0}^2
\end{align*}
for $\tau \le \tau_0  < \min(1,1 / \norm{\bC})$ and constants $D_1 = D_1(m_{HC}), D_2  = D_2(m_{HC}, \norm{\bC})$.
\end{proof}

Note that the constants $b_\ell$ appearing in the above proof are exactly the constants which have been used in the finite dimensional case (see \cite[(A.51)]{AAC2}, which coincides with \eqref{b-eq} here). 
But in contrast to the analogous Lemma A.4 in \cite{AAC2}, the above proof is much more technical, since a number of additional non-vanishing terms had to be estimated here. The basic reason for that was explained in Remark \ref{remark_unterschied} \ref{remark_unterschied_3}.


We can now establish the desired lower bound (the finite dimensional analog is \cite[Lemma A.3]{AAC2}).

\begin{proposition}\label{lower}
Let the operator~$\mC\in\cB(\cH)$ be accretive with hypocoercivity index $\mHC\in\N_0$.
Then, there exist $t_1,c_1,C_1>0$ such that
\begin{equation}\label{starstar}
\norm{e^{-\bC t}} \ge 1 - c_1 t^a - C_1 t^{a+1} \qquad \textrm{for }t \in [0, t_1],
\end{equation}
where $a = 2m_{HC}+1$. More precisely, we have
\begin{equation}\label{liminf:c1_tilde}
\liminf_{t \to 0} \frac{\norm{e^{-\bC t}} - 1 }{t^{2m_{HC}+1}} \ge - \tilde c_1 
\end{equation}
with
\small
\begin{equation}\label{c1_tilde}
 \tilde c_1 
\coleq
 \frac{\inf\Bigl\{ \limsup\limits_{n\to\infty} \norm{\sqrt{\bC_H} \bC^{m_{HC}} x_n}^2 : \\ (x_n)_{n\in\N}\subset\sphere;\ \norm{\sqrt{\bC_H}\bC^j x_n}^2 \le \tfrac1n\ \forall j=0,\dotsc,m_{HC}-1\ \forall n \in \N  \Bigr\}}{(2m_{HC}+1)!\, \binom{2m_{HC}}{m_{HC}}} . 
\end{equation}
\normalsize
\end{proposition}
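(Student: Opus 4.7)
The plan is to exploit Lemma \ref{refined} as follows: for any unit vector $x_0$ with the near-kernel property $\|\sqrt{\bC_H}\bC^j x_0\|\le\e$ for $j=0,\dots,m_{HC}-1$, it furnishes an explicit polynomial test vector $x_\tau$ for which $g(x_\tau;\tau)$ is pinned to leading order at $-2c_1(x_0)\tau^{2m_{HC}+1}$. Since $\|e^{-\bC\tau}\|^2\ge \|e^{-\bC\tau}x_\tau\|^2/\|x_\tau\|^2 = 1 + g(x_\tau;\tau)/\|x_\tau\|^2$, inserting the refined estimate produces a pointwise lower bound on the propagator norm. A routine first step is to check from the explicit form \eqref{starr} and $\|x_0\|=1$ that $\|x_\tau\|^2=1+O(\tau)$ uniformly in $x_0\in\sphere$; combined with $g(x_\tau;\tau)=O(\tau^{2m_{HC}+1})$ this gives $g(x_\tau;\tau)/\|x_\tau\|^2=g(x_\tau;\tau)+O(\tau^{2m_{HC}+2})$, so that the clean estimate
\[
\|e^{-\bC\tau}\|^2 \ge 1 - 2c_1(x_0)\tau^{2m_{HC}+1} - D_1\tau\e^2 - K\tau^{2m_{HC}+2}
\]
holds uniformly in $x_0\in\sphere$ for $\tau\in[0,\tau_0]$, with $K$ depending only on $m_{HC}$ and $\|\bC\|$.

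For the non-sharp bound \eqref{starstar} one picks any admissible $x_0\in\sphere$. For $m_{HC}=0$ this is automatic (the claim then reduces to the standard semigroup estimate $\|e^{-\bC\tau}\|\ge 1-\|\bC\|\tau$), while for $m_{HC}\ge 1$ the minimality of $m_{HC}$ in \eqref{Op:hypocoercive:kappa} forces the approximate kernel condition \eqref{HC_failure}, providing such $x_0$ for any $\e>0$. Applying the display above with a fixed $\e$ and taking a square root via $\sqrt{1+h}\ge 1+h/2-h^2$ for small $h\le 0$ yields \eqref{starstar} with concrete constants.

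The quantitative liminf \eqref{liminf:c1_tilde} is the substantive part. Given $\eta>0$, I pick a sequence $(x_n)\subset\sphere$ with $\|\sqrt{\bC_H}\bC^j x_n\|^2\le 1/n$ for $j<m_{HC}$ and $\limsup_n c_1(x_n)\le \tilde c_1+\eta$, whose existence is built into the definition of $\tilde c_1$. Applying the display above with $x_0=x_n$ and $\e^2=1/n$ gives
\[
\|e^{-\bC\tau}\|^2-1 \ge -2c_1(x_n)\tau^{2m_{HC}+1} - \frac{D_1\tau}{n} - K\tau^{2m_{HC}+2}
\]
for all $n$ and $\tau\in(0,\tau_0]$. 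To eliminate the $n$-dependent middle error, I couple $n$ to $\tau$ by setting, e.g., $n(\tau)\coleq\lceil\tau^{-(2m_{HC}+2)}\rceil$, so that $D_1\tau/n(\tau)=O(\tau^{2m_{HC}+3})$ while $n(\tau)\to\infty$ as $\tau\to 0$. Dividing by $\tau^{2m_{HC}+1}$ and taking $\liminf_{\tau\to 0}$ then produces
\[
\liminf_{\tau\to 0}\frac{\|e^{-\bC\tau}\|^2-1}{\tau^{2m_{HC}+1}}\ge -2\limsup_{\tau\to 0}c_1(x_{n(\tau)})\ge -2\limsup_n c_1(x_n)\ge -2(\tilde c_1+\eta),
\]
where the second inequality uses that cluster points of the subsequence $(c_1(x_{n(\tau)}))_\tau$ are cluster points of the full sequence. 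Letting $\eta\to 0$, and converting from $\|\cdot\|^2$ to $\|\cdot\|$ via $\sqrt{1+h}=1+h/2+O(h^2)$ with $h=O(\tau^{2m_{HC}+1})$, turns the prefactor $-2$ into $-1$ and yields \eqref{liminf:c1_tilde}.

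The main obstacle is the tension between the $\e$-slackness in Lemma \ref{refined} and the required $\tau^{2m_{HC}+1}$ scaling. In the finite-dimensional setting of \cite{AAC2} one chooses $x_0$ in the exact kernel of all $\sqrt{\bC_H}\bC^j$ ($j<m_{HC}$), annihilating the $D_1\tau\e^2$ contribution outright; in the Hilbert-space setting only the approximate condition \eqref{HC_failure} is available, forcing the coupling $n=n(\tau)$ and a calibration ensuring that the $\tau/n$ error is subordinate to $\tau^{2m_{HC}+2}$ while $c_1(x_{n(\tau)})$ still approaches the infimum defining $\tilde c_1$. This delicate balancing is precisely the technical novelty compared to the finite-dimensional analog (cf.\ Remark \ref{remark_unterschied}\ref{remark_unterschied_3}).
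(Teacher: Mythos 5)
Your overall strategy — apply Lemma \ref{refined} to a near-kernel sequence, normalize the test vector, couple the index $n$ to the time $\tau$ so the $D_1\tau\e^2$ error becomes subordinate, then take the liminf and convert from $\|\cdot\|^2$ to $\|\cdot\|$ — is exactly the paper's approach, and your treatment of \eqref{liminf:c1_tilde} is correct (the explicit coupling $n(\tau)=\lceil\tau^{-(2m_{HC}+2)}\rceil$ does the same job as the paper's abstract $h$ satisfying $h(\tau)^{-2}\tau^{-2m_{HC}}=\bigO(\tau)$, just with a concrete recipe).

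However, your step for \eqref{starstar} has a genuine gap. You propose to establish \eqref{starstar} by ``applying the display above with a \emph{fixed} $\e$.'' With $\e$ fixed and $m_{HC}\geq1$, the display reads
\[
\|e^{-\bC\tau}\|^2\ge 1-2c_1(x_0)\tau^{2m_{HC}+1}-D_1\e^2\tau-K\tau^{2m_{HC}+2},
\]
and the term $D_1\e^2\tau$ is of order $\tau^1$, which is \emph{not} subordinate to $\tau^{a}=\tau^{2m_{HC}+1}$ when $m_{HC}\ge1$. After the square root you only obtain $\|e^{-\bC\tau}\|\ge1-\bigO(\tau)$, which is strictly weaker than \eqref{starstar} (the latter asserts that the propagator norm stays within $\bigO(\tau^{2m_{HC}+1})$ of $1$). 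In the finite-dimensional setting one can take $x_0$ in the \emph{exact} kernel of all $\sqrt{\bC_H}\bC^j$, $j<m_{HC}$, so $\e=0$ and the linear error vanishes; but in the Hilbert-space setting \eqref{HC_failure} only yields $\e>0$ for each attempt, so the linear error never disappears with $\e$ held fixed. The paper therefore couples $n$ to $\tau$ \emph{already} for \eqref{starstar} (via the function $h$ in \eqref{h-conv} and the bound \eqref{low-bound}), using in addition that $c_1(x_n)$ is uniformly bounded above and away from zero for $n\geq n_0$.

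This gap is easily repaired with material you already have: your coupling $n(\tau)=\lceil\tau^{-(2m_{HC}+2)}\rceil$ makes $D_1\tau/n(\tau)=\bigO(\tau^{2m_{HC}+3})$, so
\[
\|e^{-\bC\tau}\|^2\ge 1-2c_1(x_{n(\tau)})\tau^{2m_{HC}+1}-(K+D_1)\tau^{2m_{HC}+2},
\]
and since $c_1(x_n)$ is uniformly bounded above (by $\|\sqrt{\bC_H}\|^2\|\bC\|^{2m_{HC}}/[(2m_{HC}+1)!\binom{2m_{HC}}{m_{HC}}]$), this yields \eqref{starstar} after the square root. You should also note, as the paper does, that $c_1(x_n)$ is bounded away from $0$ for large $n$ (via \eqref{Op:hypocoercive:kappa} and the near-kernel bounds), so the resulting $c_1$ in \eqref{starstar} is indeed strictly positive and the bound is genuinely cubic (or higher) rather than trivially linear.
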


\begin{proof}
The (coercive) case $m_{HC}=0$ is easy to see:
 for $\norm{x}=1$
\begin{align*}
    \norm{e^{-\bC t} x}^2 &= \langle x, \mQ(t)x \rangle \\
    & = 1 + t \langle x, \mU_1 x \rangle + \langle x, \sum_{j=2}^\infty \frac{t^j}{j!} \mU_j x \rangle \\
    & = 1 - 2t \norm{\sqrt{\bC_H}x}^2 + t^2 \langle x, \sum_{j=0}^\infty \frac{t^j}{(j+2)!} \mU_{j+2} x \rangle \\
    & \ge 1 - 2t \norm{\sqrt{\bC_H}x}^2 -\tilde C_1t^2\quad \textrm{for } t \in [0,1]\,, 
\end{align*}
where we used $\|\mU_j\| \le (2\|\bC\|)^j$, $t_1=1$ and $\tilde C_1= \sum_{j=0}^\infty \frac{t^j}{(j+2)!} (2\norm{\bC})^j$.
Taking the supremum over all $x\in\sphere$ and forming the square root, we obtain \eqref{starstar} with $c_1 = \inf\{ \norm{\sqrt{\bC_H} x}^2 : x \in \sphere \} \ge \kappa$, with $\kappa$ as in \eqref{Op:hypocoercive:kappa}.

Suppose now that $m_{HC}\ge 1$.
We start with a sequence $(x_n)_{n\in\N}$, $x_n\in\sphere$ of unit vectors in~$\cH$, each satisfying
\begin{equation}\label{einsdurchn}
\norm{ \sqrt{\bC_H} \bC^j x_n } \le \frac{1}{n} \qquad \forall j=0,\dotsc,m_{HC}-1, 
\end{equation}
which exists by definition of the hypocoercivity index (cf.\ Remark \ref{remark_unterschied}\ref{remark_unterschied_3}). Using Lemma \ref{refined} we obtain $\tau_0,\,D_1,\,D_2>0$ and for each $n \in \N$ a function $(x_{n, \tau})_{\tau \in [0,\tau_0]}$ such that
\begin{equation}\label{g-below}
g(x_{n,\tau}; \tau) \ge -2 c_1(x_{n})\tau^{2m_{HC}+1} - D_1 \frac{\tau}{n^2} - D_2  \tau^{2m_{HC}+2} 
\,,\qquad \tau\in[0,\tau_0]. 
\end{equation}
From \eqref{Op:hypocoercive:kappa} and \eqref{einsdurchn} we have
$$
  \norm{ \sqrt{\bC_H} \bC^{\mHC} x_n }^2\ge\kappa-\sum_{j=0}^{\mHC-1} \norm{ \sqrt{\bC_H} \bC^j x_n }^2\ge\kappa-\frac{\mHC}{n^2} \ge \frac{\kappa}{2}\quad \mbox{for } n\ge n_0:=\sqrt{\frac{2\mHC}{\kappa}}.
$$
Hence, the constants $c_1(x_n)\ge0$, defined in \eqref{c1-def}, are uniformly bounded and bounded away from zero for $n\ge n_0$.

Let $\tilde x_{n, \tau} := \frac{x_{n,\tau}}{\norm{x_{n,\tau}}}$ and note that
\begin{equation}\label{xn-conv}
\norm{x_{n,\tau}} \to 1\quad \mbox{for } \tau\to 0,\quad \mbox{uniformly in }n\,,
    \end{equation}
because the constants $b_\ell$ in Lemma \ref{refined} do not depend on $x_n$ but only on the hypocoercivity index. 

For coupling $n$ with $\tau$ in $\tilde x_{n, \tau}$\,, we choose a function $h \colon [0,1] \to \N$ such that
\begin{equation}\label{h-conv}
\frac{1}{h(\tau)^2} \frac{1}{\tau^{2m_{HC}}} = \bigO(\tau) 
\qquad \textrm{for }\tau \to 0 \,,    
\end{equation}
which implies $\lim_{\tau \to 0} h(\tau) = \infty$ and $\lim_{\tau \to 0} \norm{x_{h(\tau), \tau}} = 1$. With  \eqref{g-below} and this function $h$ we obtain the lower bound
\begin{align}\label{low-bound}
\frac{\norm{e^{-\bC \tau}}^2 - 1}{\tau^{2m_{HC}+1}} &\ge \frac{\norm{e^{-\bC \tau}\tilde x_{h(\tau), \tau}}^2 -1 }{\tau^{2m_{HC}+1}} 
 = \frac{g(\tilde x_{h(\tau), \tau};\tau)}{\tau^{2m_{HC}+1}}  = \frac{g(x_{h(\tau), \tau};\tau)}{\tau^{2m_{HC}+1} \norm{x_{h(\tau), \tau}}^2}\\
&\ge - 2 \frac{c_1(x_{h(\tau)})}{\norm{x_{h(\tau), \tau}}^2} 
- D_1 \frac{1}{h(\tau)^2 \tau^{2m_{HC}} \norm{x_{h(\tau), \tau}}^2} - D_2 \frac{\tau}{\norm{x_{h(\tau), \tau}}^2} 
\,,\quad \tau\in[0,\tau_0]. \nonumber
\end{align}
Due to \eqref{xn-conv} and \eqref{h-conv}, $\frac1{\norm{x_{h(\tau), \tau}}}$ and $\frac{1}{h(\tau)^2} \frac{1}{\tau^{2m_{HC}}}$ are uniformly bounded on $[0,\tau_0]$. Moreover, the constants $c_1(x_{h(\tau)})$ are uniformly bounded away from zero for $h(\tau)\ge n_0$, or equivalently for $\tau\le t_1$ with some $t_1>0$.  Hence, \eqref{low-bound}  
gives \eqref{starstar}. 

In general it is not known if $\norm{e^{-\bC \tau}}$ is $(2m_{HC}+1)$-times differentiable at $\tau=0$. Hence we estimate now only ${\displaystyle  \liminf_{\tau\to 0}} \frac{\norm{e^{-\bC \tau}}^2 - 1 }{\tau^{2m_{HC}+1}}$ --- to show \eqref{liminf:c1_tilde}:
\[ \liminf_{\tau\to 0} \frac{\norm{e^{-\bC \tau}}^2 - 1 }{\tau^{2m_{HC}+1}} \ge - 2  \limsup_{n \to \infty}  c_1(x_n), \]
and we recall that $\displaystyle \limsup_{n \to \infty}  c_1(x_n)>0$. Taking the supremum over all sequences $(x_n)_{n\in\N}$ that satisfy  \eqref{einsdurchn}, we obtain
\begin{multline*}
\liminf_{\tau\to 0} \frac{\norm{e^{-\bC \tau}}^2 - 1 }{\tau^{2m_{HC}+1}} \ge - 2 
\inf \big\{ \limsup_{n \to \infty} c_1(x_n):\\
(x_n)_{n\in\N}\subset \sphere \textrm{ with } 
\norm{\sqrt{\bC_H}\bC^j x_n}^2 \le 1/n \textrm{ for }j=0,\dotsc,m_{HC}-1\textrm{ and }n \in \N \big\},
\end{multline*}
as claimed in \eqref{liminf:c1_tilde}. 
\end{proof}

In the next lemma we show that the leading order in the upper and lower bounds coincide. Moreover, in the finite-dimensional case they simplify to the bound obtained in \cite[Theorem 2.7]{AAC2}.
\begin{lemma} \label{lem:eqqq}
Let the operator~$\mC\in\cB(\cH)$ be accretive with HC-index~$\mHC\in\N_0$.
\begin{enumerate}[label=(\alph*)]
 \item \label{c1=c2}
Then the bounds in Propositions \ref{prop:short-t-decay:upper} and \ref{lower} agree in the leading order, i.e., we have $\tilde c_1=\tilde c_2$ with $\tilde c_1$ given in~\eqref{c1_tilde} and $\tilde c_2$ given in~\eqref{c2_tilde}, respectively.
 \item \label{c}
With $c= \tilde c_1= \tilde c_2$ as given in part~\ref{c1=c2}, we have
\begin{equation}\label{limit:c}
 \lim_{t \to 0} \frac{\norm{e^{-\bC t}} -1}{t^{2m_{HC}+1}} = -c , 
\end{equation}
hence \eqref{d25} follows.
\end{enumerate}
\end{lemma}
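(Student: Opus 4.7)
The plan for part (a) is to strip off the common denominator $(2\mHC+1)!\binom{2\mHC}{\mHC}$ and compare the two numerators by a double inequality. To this end, denote
\[
f(\delta) \coleq \inf\bigl\{\norm{\sqrt{\bC_H}\bC^{\mHC} x}^2 : x\in\sphere,\ \norm{\sqrt{\bC_H}\bC^j x}^2 \le \delta\ \forall j=0,\dotsc,\mHC-1\bigr\},
\]
so that the numerator of $\tilde c_2$ is $N_2 \coleq \lim_{\delta\to 0^+} f(\delta)$ and the numerator of $\tilde c_1$ is
\[
N_1 \coleq \inf\bigl\{\limsup_n \norm{\sqrt{\bC_H}\bC^{\mHC} x_n}^2 : (x_n)\subset\sphere,\ \norm{\sqrt{\bC_H}\bC^j x_n}^2 \le 1/n\ \forall j,n\bigr\}.
\]
The starting observation is that $f$ is nonincreasing on $(0,\infty)$, since the feasible set shrinks as $\delta$ decreases; therefore $N_2$ exists in $[0,\norm{\sqrt{\bC_H}\bC^{\mHC}}^2]$.

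\textbf{The two inequalities.} For $N_1 \ge N_2$, I would observe that for any admissible sequence $(x_n)$ in the definition of $N_1$, each $x_n$ is feasible for $f(1/n)$, so $\norm{\sqrt{\bC_H}\bC^{\mHC} x_n}^2 \ge f(1/n)$; passing to the $\limsup$ and using monotonicity of $f$ gives $\limsup_n \norm{\sqrt{\bC_H}\bC^{\mHC} x_n}^2 \ge \lim_{\delta\to 0} f(\delta) = N_2$. Taking the infimum over sequences yields $N_1 \ge N_2$. For the reverse inequality, I would construct, for each $n\in\N$, an element $x_n\in\sphere$ that is feasible for $f(1/n)$ and satisfies $\norm{\sqrt{\bC_H}\bC^{\mHC} x_n}^2 \le f(1/n) + 1/n$ (such $x_n$ exists by the definition of infimum). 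The resulting sequence is admissible in the definition of $N_1$, and $\limsup_n \norm{\sqrt{\bC_H}\bC^{\mHC} x_n}^2 \le \lim_n(f(1/n)+1/n) = N_2$, so $N_1 \le N_2$. Hence $N_1 = N_2$ and part (a) follows.

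\textbf{Part (b).} Once part (a) gives $\tilde c_1 = \tilde c_2 =: c$, part (b) is an immediate sandwich: Proposition~\ref{prop:short-t-decay:upper} (estimate~\eqref{limsup:c2_tilde}) yields $\limsup_{t\to 0}(\norm{e^{-\bC t}}-1)/t^{2\mHC+1} \le -\tilde c_2 = -c$, while Proposition~\ref{lower} (estimate~\eqref{liminf:c1_tilde}) yields $\liminf_{t\to 0}(\norm{e^{-\bC t}}-1)/t^{2\mHC+1} \ge -\tilde c_1 = -c$. Consequently the limit~\eqref{limit:c} exists and equals $-c$, from which the asymptotic expansion~\eqref{d25} follows at once. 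No serious obstacle is expected: the real work is already carried out in Propositions~\ref{prop:short-t-decay:upper} and~\ref{lower}, and the only care needed here is to handle the order of quantifiers correctly in part (a), using the monotonicity of $f$ to collapse the infimum-over-sequences description into the single monotone limit $\lim_{\delta\to 0} f(\delta)$.
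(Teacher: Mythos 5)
Your proof is correct and follows essentially the same route as the paper's: in both arguments the inequality $\tilde c_1 \ge \tilde c_2$ (your $N_1 \ge N_2$) comes from noting that each $x_n$ in an admissible sequence is feasible for $f(1/n)$ together with the monotonicity of $f$, while the reverse inequality is obtained by building a near-minimizing sequence for the $\delta$-infima, and part (b) is the same sandwich of \eqref{limsup:c2_tilde} and \eqref{liminf:c1_tilde}. The only cosmetic difference is that you settle for $\limsup_n\norm{\sqrt{\bC_H}\bC^{\mHC}x_n}^2\le N_2$ for the constructed sequence, whereas the paper also records that the actual limit exists (which follows from your lower bound $\norm{\sqrt{\bC_H}\bC^{\mHC}x_n}^2\ge f(1/n)$); this does not change the argument.
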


\begin{proof}
Part~\ref{c1=c2}:
Considering the definition of $\tilde c_2$, we can choose a (minimizing) sequence $(x_n)_{n\in\N}$ in $\sphere$ with $\norm{\sqrt{\bC_H}\bC^j x_n}^2 \le \frac{1}{n}$ for $j=0,\dotsc,m_{HC}-1$ such that
\[
 \tilde c_2 
= C \lim_{n \to \infty} \norm{\sqrt{\bC_H}\bC^{m_{HC}}x_n}^2 \\
= C \limsup_{n \to \infty} \norm{\sqrt{\bC_H}\bC^{m_{HC}}x_n}^2 
\ge  \tilde c_1
\]
holds with $C := \big[(2m_{HC}+1)!\, \binom{2m_{HC}}{m_{HC}}\big]^{-1}$. 

Conversely, let $(x_n)_{n\in\N} 
\subset \sphere$ 
with $\norm{\sqrt{\bC_H}\bC^j x_n}^2 \le \frac{1}{n}$ for $j=0,\dotsc,m_{HC}-1$ and $n \in \N$. We then have, for each $n \in \N$,
\begin{multline*}
C \inf \big\{ \norm{\sqrt{\bC_H}\bC^{m_{HC}} x}^2 :\  x \in \sphere,\ 
\norm{\sqrt{\bC_H}\bC^j x}^2 \le \frac{1}{n}\ \forall j=0,\dotsc,m_{HC}-1 \big\} \\
\le C \norm{\sqrt{\bC_H}\bC^{m_{HC}} x_n}^2\,.
\end{multline*}
By taking the $\displaystyle\limsup_{n\to\infty}$, 
which gives on the left-hand side the limit for $n \to \infty$ because of monotonicity (see the proof of Proposition \ref{prop:short-t-decay:upper} for more details), we find
\[ \tilde c_2 \le C \limsup_{n\to\infty} \norm{\sqrt{\bC_H}\bC^{m_{HC}}x_n}^2. \]
Taking next the infimum over all sequences $(x_n)_{n\in\N}$, $x_n\in \sphere$, we obtain $\tilde c_2 \le \tilde c_1$ and thus the stated equality. 

Part~\ref{c}:
Combining the estimates in~\eqref{limsup:c2_tilde} and~\eqref{liminf:c1_tilde} with the identity $\tilde c_1=\tilde c_2$ in part~\ref{c1=c2}, shows that the limit in~\eqref{limit:c} exists and is equal to $-c$.
This implies~\eqref{d25} and finishes the proof.
\end{proof}

We are now finally in the position to combine the above results to obtain the claimed short-time decay estimates.

\begin{proof}[Proof of Theorem \ref{thm:short-t-decay:sandwich}(a)]
The estimates are given in Propositions \ref{prop:short-t-decay:upper} and \ref{lower}, and Lemma~\ref{lem:eqqq}. We see $c_1 \ge c \ge c_2$ directly from inserting \eqref{d25} into \eqref{short-t-decay:sandwich}, leading to
\[ -c_1 - C_1 t \le - c + h(t) \le - c_2 + C_2 t \] 
for small $t$ with $\lim_{t \to 0} h(t) = 0$, and letting $t \to 0$.
\end{proof}

\subsection{Reverse direction of Theorem \ref{thm:short-t-decay:sandwich}}\label{sec:reverse}

\begin{lemma}\label{lem:fin-index}
    Let the operator $\bC\in\cB(\cH)$ be accretive and hypocoercive, or (equivalently due to Proposition \ref{prop:UniExpStableSG}) let the corresponding semigroup satisfy
    \begin{equation}\label{norm-smaller1}
        \|\bP(t_0)\|<1\quad \mbox{for some } t_0>0\ .
    \end{equation}
    Then, $\bC$ has a finite hypocoercivity index $\mHC\in\N_0$.
\end{lemma}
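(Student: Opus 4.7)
\emph{Plan.} The strategy is to prove the contrapositive: if $\bC$ has no finite hypocoercivity index, then $\|e^{-\bC t_0}\|=1$ for every $t_0>0$, contradicting~\eqref{norm-smaller1}.

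Negating Definition~\ref{def:op:HC-index}, the absence of a finite hypocoercivity index means that for every $M\in\N_0$ and every $\e>0$ there exists a unit vector $x\in\sphere$ such that $\|\sqrt{\bC_H}\bC^j x\|\le\e$ for all $j=0,\dotsc,M$. Since $\bC$ is accretive we automatically have $\|e^{-\bC t_0}\|\le 1$, so it suffices to exhibit, for arbitrary $\eta>0$ and fixed $t_0>0$, a vector $x\in\sphere$ making $\|e^{-\bC t_0} x\|^2=\langle x,\bQ(t_0)x\rangle$ larger than $1-\eta$.

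The heart of the argument is to split the Taylor series $\langle x,\bQ(t_0)x\rangle-1=\sum_{j=1}^{\infty}\frac{t_0^j}{j!}\langle x,\mU_j x\rangle$ from~\eqref{A:Q} at some index $M+1$. Using the alternative form~\eqref{eq1} of $\mU_j$ together with Cauchy--Schwarz applied to $\langle \bC^k x,\bC_H\bC^{j-1-k}x\rangle=\langle\sqrt{\bC_H}\bC^k x,\sqrt{\bC_H}\bC^{j-1-k}x\rangle$, I expect the bound $|\langle x,\mU_j x\rangle|\le 2^j\e^2$ for $1\le j\le M+1$, so the head of the series is bounded by $(e^{2t_0}-1)\e^2$, which can be made $<\eta/2$ by choosing $\e$ small. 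For the tail $j\ge M+2$, the crude operator-norm bound $\|\mU_j\|\le(2\|\bC\|)^j$ coming from~\eqref{def-Uj} yields a series whose tail is $<\eta/2$ for $M$ large, uniformly in $x\in\sphere$, using only the boundedness of $\bC$.

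With these estimates in hand the quantifier order is: fix $t_0$ and $\eta$; choose $M$ (depending on $t_0,\|\bC\|,\eta$) to kill the tail; choose $\e$ (depending on $t_0,\eta$) to kill the head; only then invoke the negated index hypothesis to produce $x\in\sphere$ with $\|\sqrt{\bC_H}\bC^j x\|\le\e$ for $j=0,\dotsc,M$. This yields $\|e^{-\bC t_0} x\|^2>1-\eta$, hence $\|e^{-\bC t_0}\|\ge 1$, the desired contradiction. The only potentially delicate point I foresee is ensuring the tail estimate is uniform in $x$; this is handled cleanly by the bound $\|\mU_j\|\le(2\|\bC\|)^j$ and relies essentially on $\bC$ being bounded, consistent with the remark that unbounded hypocoercive operators may fail to have any finite hypocoercivity index (cf.\ Example~\ref{example:AAC}).
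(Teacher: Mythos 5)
Your proof is correct and follows essentially the same approach as the paper: expand $\langle x,\bQ(t_0)x\rangle$ in its Taylor series, split at a finite index, control the tail uniformly via $\|\mU_j\|\le(2\|\bC\|)^j$, and control the head by using the failure of the coercivity estimate together with~\eqref{eq1} and Cauchy--Schwarz. The only technical difference is in the head estimate: you truncate at $M+1$ so that for every term $\langle\sqrt{\bC_H}\bC^k x,\sqrt{\bC_H}\bC^{j-1-k}x\rangle$ in~\eqref{eq1} \emph{both} factors have index $\le M$, giving an $O(\e^2)$ bound; the paper instead takes the head up to $a=2m+1$ with $m=(a-1)/2$ and observes that for each term at least \emph{one} of $k$, $j-1-k$ is $\le m$, using Cauchy--Schwarz with only one small factor and thereby obtaining an $O(1/n)$ bound (plus an explicit constant $\alpha_j$ from $\|\bC\|$). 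The paper's halving trick yields a marginally smaller bound on the resulting hypocoercivity index, but since the lemma only asserts finiteness this is immaterial, and your version is arguably slightly cleaner in that it avoids introducing the odd integer $a$ and the constants $\alpha_j$.
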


\begin{proof}
Set $p:=\|\mP(t_0)\|^2$ such that $p<1$ due to~\eqref{norm-smaller1}. For each $x\in\sphere$ we have: $\|\mP(t)x\|^2$ is analytic on $\R_0^+$ with
\[
  \|\mP(t)x\|^2 = \sum_{j=0}^\infty \langle x,\mU_j x\rangle \frac{t^j}{j!}\ .
\]
Using $\|\mU_j\| \le (2\|\bC\|)^j$, this series converges absolutely and uniformly w.r.t.\ $x\in\sphere$ and w.r.t.\ $t\in[0,t_0]$. Hence, there exists some (large enough) \emph{odd} integer $a$ such that
\begin{equation}\label{sum-est}
    \sum_{j=a+1}^\infty \frac{(2\|\bC\|t)^j}{j!} \le \frac{1-p}{2},\quad t\in[0,t_0]\ .
\end{equation}

We prove the statement in Lemma~\ref{lem:fin-index} by contradiction: Assume the uniform estimate~\eqref{Op:hypocoercive:kappa} is false for $m:=\frac{a-1}{2}\in\N_0$. Then, due to \eqref{HC_failure}, there exists a sequence
\begin{equation}\label{small-xn}
    (x_n)_{n\in\N}, \ x_n\in \sphere:\:\quad \|\sqrt{\bC_H}\bC^j x_n\|\le\frac1n\quad \forall\,j=0,...,m\ .
\end{equation}
Using \eqref{sum-est} we have:
\begin{eqnarray}\label{P-lower}
    \|\mP(t)x_n\|^2 &=&
    1+\displaystyle\sum_{j=1}^a \langle x_n,\mU_j x_n\rangle \frac{t^j}{j!}
    + \sum_{j=a+1}^\infty \langle x_n,\mU_j x_n\rangle \frac{t^j}{j!} \nonumber \\
    &\ge& 1 - \displaystyle\sum_{j=1}^a \big|\langle x_n,\mU_j x_n\rangle \big| \frac{t^j}{j!} -\frac{1-p}{2} \nonumber \\
    &=& \frac{1+p}{2} - \displaystyle\sum_{j=1}^a \big|\langle x_n,\mU_j x_n\rangle\big| \frac{t^j}{j!}\,,\quad t\in[0,t_0] \,,
\end{eqnarray}
where $p<\frac{1+p}{2}$ since $p<1$. To estimate the last sum, we use \eqref{eq1} with $j=1,...,2m+1$: 
$\langle x_n,\mU_j x_n\rangle$ is a linear combination of terms of the form $\langle \sqrt{\bC_H} \bC^k x_n,\sqrt{\bC_H} \bC^{j-1-k} x_n\rangle$ with $k\le m$ or $j-1-k\le m$. So, due to \eqref{small-xn}, the norm of one side of this inner product is bounded by $\frac1n$, and hence the inner product is bounded by
$$
  \frac1n \max\big(1, \|\bC\|^{j-\frac12}\big) =: \frac1n \alpha_j\,.
$$
Using \eqref{eq1} and $\sum_{k=0}^{j-1} \binom{j-1}{k} =2^{j-1}$ we obtain
\begin{equation}\label{uj-est}
    \big|\langle x_n,\mU_j x_n\rangle\big| \le \frac1n 2^j \alpha_j\,,\quad j=1,...,a\,.
\end{equation}
Using \eqref{uj-est} for \eqref{P-lower} yields
\[
  \|\mP(t)x_n\|^2 \ge \frac{1+p}{2} - \frac1n\sum_{j=1}^a \alpha_j\frac{(2t)^j}{j!} \quad \mbox{for } 0\le t\le t_0\,.
\]
For $t=t_0$ and $n$ large enough this contradicts \eqref{norm-smaller1}, since $p=\|\mP(t_0)\|^2<1$.

Thus, \eqref{Op:hypocoercive:kappa} holds for $m=\frac{a-1}{2}$ and the hypocoercivity index of $\bC$ satisfies $\N_0\ni \mHC\le m$.
\end{proof}

\begin{proof}[Proof of Theorem \ref{thm:short-t-decay:sandwich}(b)]
Because of \eqref{d25}, there exists some $t_0>0$ such that $\|\bP(t_0)\|<1$. So, Lemma \ref{lem:fin-index} shows that $\bC$ has a hypocoercivity index $\mHC\in\N_0$. But then, Theorem \ref{thm:short-t-decay:sandwich}(a) implies
$$
  \|\bP(t)\|=1-\hat c t^{2\mHC+1}+o(t^{2\mHC+1})\,.
$$
Thus $a=2\mHC+1\in2\N-1$.
\end{proof}

In this section we have established 
estimates for the short-time decay of the propagator norm in terms of the hypocoercivity index (if it exists). The next section provides a staircase form which allows to check the hypocoercivity properties.


\section{Staircase form}\label{sec_staircase}
In the finite-dimensional setting, by choosing appropriate bases for the associated spaces one can derive a so-called ``staircase form'' from which the property of hypocoercivity and the hypocoercivity index can be directly read off, see \cite{AAM21,AAM22}. There are also coordinate-free versions of such staircase forms (for the analysis of differential algebraic equations) constructed via sequences of subspaces, so called Wong sequences, see e.g. \cite{BerR13}. In this section we follow this path and derive a ``staircase-form'' that displays the hypocoercivity properties.

Given a direct sum decomposition $\cH = \bigoplus_{i=1}^s \cH_i$ ($s \in \N)$ with subspaces $\cH_i \subseteq \cH$ and an operator $\bT \in \cB(\cH)$ we denote by $\bT_{i,j} \in \cB(\cH)$ ($i,j = 1, \dotsc, s$) the $(i,j)$-th component of $\bT$ given by
\[ \bT_{i,j} = \pi_{\cH_i} \circ \bT \circ \iota_{\cH_j} \]
where $\pi_{\cH_i} \colon \cH \to \cH_i$ and $\iota_{\cH_i} \colon \cH_i \to \cH$ are the canonical projections and injections, respectively. In this case, $\bT$ can be represented by the $s \times s$ operator matrix $(\bT_{i,j})_{i,j = 1, \dotsc, s}$. For the following Lemma, if the spaces $\cH_i$ carry a superindex this will be reflected in the operators as well, i.e., for $\cH = \bigoplus_i \cH_i^s$ we write $\bT = (\bT^s_{i,j})_{i,j}$. Note that $(\bT_{i,j})^* = (\bT^*)_{j,i}$. We fix the notational ambiguity that occurs when leaving out the parentheses by setting $\bT_{i,j}^* \coleq (\bT_{i,j})^*$.

We call a self-adjoint or skew-adjoint operator in $\cB(\cH)$ \emph{nonsingular} if it is injective and has dense image (note that because of self-/skew-adjointness and equality of domain and codomain these two conditions are actually equivalent) and \emph{singular} otherwise.

The next result is called the staircase form for ($\bJ$, $\bR$), see \cite[Lemma 1]{AAM21}.

\begin{lemma}
Let $\bJ \in \cB(\cH)$ be skew-adjoint and $\bR \in \cB(\cH)$ self-adjoint with equal domains and $\dim \ker \bR < \infty$. Then there exists a direct sum decomposition $\cH = \bigoplus_{i=1}^s \cH_i$ with $s \ge 2$ and each $\cH_i$ a (possibly zero) subspace of $\cH$ such that:
\begin{itemize}
\item $\dim \cH_i < \infty$ for $i=2,\dotsc, s$;
\item with respect to this decomposition, $\bJ$ is an operator matrix of the form
\begin{equation}
\label{dienstag2}\bJ = 
\begin{bNiceArray}{ccccccc|c}[margin,nullify-dots]
\bJ_{1,1} & -\bJ_{2,1}^* & 0 & & \cdots & & 0 & 0 \\
\bJ_{2,1} & \bJ_{2,2} & -\bJ_{3,2}^* & & & & & \\
0 & \ddots & \ddots & \ddots & & & \vdots &  \\
& & \bJ_{k, k-1} & \bJ_{k, k} & -\bJ_{k+1, k}^ * & & & \vdots \\
\vdots & & & \ddots & \ddots & \ddots & 0 &  \\
& & & & \bJ_{s-2, 2-3} & \bJ_{s-2, s-2} & - \bJ_{s-1, s-2}^* & \\
0 & & \cdots & & 0 & \bJ_{s-1, s-2} & \bJ_{s-1, s-1} & 0 \\
\hline
0 & & & \cdots & & & 0 & \bJ_{s,s} 
\end{bNiceArray}
\end{equation}
i.e., $\bJ_{j,k} = - \bJ_{k,j}^*$, $\bJ_{j,k} = 0$ for $\abso{j-k} \ge 2$, $\bJ_{s, s-1} = \bJ_{s-1, s} = 0$;
\item if $s \ge 3$, $\bJ_{i, i-1}$ is surjective (and hence $-\bJ_{i, i-1}^*$ is injective) for $i = 2, \dotsc, s-1$; \item $\bR$ has the form
\begin{equation}
\bR =
\begin{bNiceArray}{cc}[margin,nullify-dots]
\bR_{1,1} & 0 \\
0 & 0
\end{bNiceArray}
\end{equation}
i.e., $\bR_{j,k} = 0$ for $j+k>2$ and $\bR_{1,1} = \bR_{1,1}^*$;
\item if $\bR$ is nonsingular then $s=2$ and $\cH_2 = \{0\}$.
\end{itemize}
\end{lemma}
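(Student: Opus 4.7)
The plan is to build $\cH_s$ first as the $\bJ$-invariant part of $\ker \bR$ and then to generate the remaining blocks $\cH_1, \dotsc, \cH_{s-1}$ by a Krylov-style iteration on its orthogonal complement. I would start by noting that since $\ker \bR$ is closed and finite-dimensional, the family of all closed $\bJ$-invariant subspaces of $\ker \bR$ has a unique greatest element (the sum over the family is finite-dimensional, hence closed, and remains $\bJ$-invariant); call this element $\cH_s$ and set $\cH' \coleq \cH \ominus \cH_s$. Skew-adjointness of $\bJ$ then makes $\cH'$ also $\bJ$-invariant, and self-adjointness of $\bR$ together with $\bR|_{\cH_s}=0$ yields $\im \bR \subseteq \cH_s^\perp = \cH'$.

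Next I would set $\cH_1 \coleq (\ker \bR)^\perp = \overline{\im \bR} \subseteq \cH'$, define $V_1 \coleq \cH_1$, and proceed inductively by
\[ V_k \coleq \overline{V_{k-1} + \bJ V_{k-1}} \subseteq \cH',\qquad \cH_k \coleq V_k \ominus V_{k-1}\quad (k \ge 2). \]
Every $\cH_k$ with $k \ge 2$ lies in $V_1^\perp \cap \cH' = \ker \bR \cap \cH'$ and is therefore finite-dimensional. To prove termination, I would argue by contradiction: if no $V_k$ equals $\cH'$, then $V_\infty \coleq \overline{\bigcup_k V_k}$ is a proper closed $\bJ$-invariant subspace of $\cH'$ (invariance follows from $\bJ V_k \subseteq V_{k+1}$ together with continuity of $\bJ$), so by skew-adjointness its orthogonal complement in $\cH'$ is a nonzero closed $\bJ$-invariant subspace contained in $\ker \bR \cap \cH'$; taking its direct sum with $\cH_s$ would produce a strictly larger closed $\bJ$-invariant subspace of $\ker \bR$, contradicting maximality of $\cH_s$. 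Since the strictly increasing chain of subspaces $\cH_2 \oplus \cdots \oplus \cH_k$ sits inside the finite-dimensional $\ker \bR \cap \cH'$, termination occurs after finitely many steps, yielding some $s \ge 2$ with $V_{s-1} = \cH'$. In the edge case where $\bR$ is nonsingular, $\ker \bR = \{0\}$ forces $\cH_s = \{0\}$ and $V_1 = \cH' = \cH$, giving $s = 2$ and $\cH_2 = \{0\}$ as required.

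I would then read off the matrix structure directly from the construction. The inclusion $\bJ \cH_k \subseteq V_{k+1} = \cH_1 \oplus \cdots \oplus \cH_{k+1}$ gives $\bJ_{j,k} = 0$ for $j \ge k + 2$, and the identity $\bJ_{j,k} = -\bJ_{k,j}^*$ (from skew-adjointness) extends this to $k \ge j + 2$, establishing block-tridiagonality on $\cH_1 \oplus \cdots \oplus \cH_{s-1}$. The decoupling $\bJ_{s,k} = \bJ_{k,s} = 0$ for $k \le s-1$ follows from the $\bJ$-invariance of both $\cH_s$ and $\cH'$. For $i \in \{2, \dotsc, s-1\}$, applying the orthogonal projection $P_i$ onto $\cH_i$ to the identity $V_i = \overline{V_{i-1} + \bJ V_{i-1}}$ yields $\cH_i = \overline{P_i \bJ V_{i-1}} = \overline{P_i \bJ \cH_{i-1}} = \overline{\im \bJ_{i,i-1}}$, where the already-established tridiagonality collapses $P_i \bJ V_{i-1}$ to $P_i \bJ \cH_{i-1}$; since $\cH_i$ is finite-dimensional the image is automatically closed, so $\bJ_{i,i-1}$ is surjective. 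Finally, $\bR_{j,k} = 0$ whenever $j \ge 2$ or $k \ge 2$ follows from $\bR \cH_j = 0$ for $j \ge 2$ combined with self-adjointness of $\bR$, producing the claimed form.

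The main obstacle I anticipate is the careful bookkeeping in the second step: verifying that $V_\infty$ is genuinely $\bJ$-invariant so that skew-adjointness transfers invariance to its orthogonal complement in $\cH'$, and justifying termination using only the finite-dimensionality of $\ker \bR \cap \cH'$ even though $\cH_1$ itself may be infinite-dimensional. Everything else is a matter of unwinding the definitions of $V_k$ and $\cH_k$.
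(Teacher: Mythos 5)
Your proof is correct, but it takes a genuinely different route from the paper's. The paper begins with the split $\cH^1_1=(\ker\bR)^\perp$, $\cH^1_2=\ker\bR$ and then iteratively refines the \emph{last} block: at step $i$ it splits $\cH^i_{i+1}$ into $\im\bJ^i_{i+1,i}$ and its orthocomplement, stopping when $\bJ^i_{i+1,i}=0$; termination is forced because the dimension of the last block strictly decreases as long as the split is nontrivial, bounded above by $\dim\ker\bR$. In that construction the decoupled block $\cH_s$ emerges only a posteriori as the terminal remainder. You instead isolate $\cH_s$ up front as the maximal closed $\bJ$-invariant subspace of $\ker\bR$ (well-defined since $\ker\bR$ is finite-dimensional) and then build the tridiagonal part by a Krylov-style iteration $V_k=\overline{V_{k-1}+\bJ V_{k-1}}$ inside $\cH'=\cH\ominus\cH_s$, with $\cH_k=V_k\ominus V_{k-1}$. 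Both are valid; your version makes the intrinsic role of $\cH_s$ (the obstruction block) visible from the start, while the paper's is more directly algorithmic and mirrors the finite-dimensional staircase reduction of \cite{AAM21}. Your surjectivity argument for $\bJ_{i,i-1}$ (via $V_i\subseteq V_{i-1}\oplus\overline{P_i\bJ\cH_{i-1}}$ together with finite-dimensionality of $\cH_i$) and the verification of block-tridiagonality are sound.

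One small wrinkle: the termination argument is phrased in a slightly roundabout way. It is cleaner to observe directly that the increasing chain of finite-dimensional subspaces $\cH_2\oplus\cdots\oplus\cH_k\subseteq\ker\bR\cap\cH'$ must stabilize, say $V_{k_0+1}=V_{k_0}$; then $V_{k_0}$ is closed and $\bJ$-invariant, so $\cH'\ominus V_{k_0}$ is a closed $\bJ$-invariant subspace contained in $\ker\bR$ and orthogonal to $\cH_s$, which forces it to be $\{0\}$ by maximality of $\cH_s$, hence $V_{k_0}=\cH'$. This avoids passing through $V_\infty$ and makes the use of maximality explicit. The essential content of your argument is nonetheless present and correct.
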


\begin{proof}
We first recall  the identities $\ker \bT = (\im \bT^*)^\perp$, $(\ker \bT)^\perp = \overline{\im \bT^*}$ for any $\bT \in \cB(\cH)$. Then, we view $\bR$ as an operator $\bR \colon (\ker \bR)^\perp \oplus \ker \bR \to \overline{\im \bR} \oplus (\im \bR)^\perp$
and set
\begin{align*}
\cH^1_1 & \coleq (\ker \bR)^\perp = \overline{\im \bR}, \\
\cH^1_2 & \coleq \ker \bR = (\im \bR)^\perp
\end{align*}
to write $\bR, \bJ \in \cB(\cH^1_1 \oplus \cH^1_2)$ in components as follows:
\[ \bR = \begin{bmatrix}
\bR_{1,1} & 0 \\
0 & 0
\end{bmatrix}, \qquad
\bJ = \begin{bmatrix}
\bJ^1_{1,1} & \bJ^1_{1,2} \\
\bJ^1_{2,1} & \bJ^1_{2,2}
\end{bmatrix}.
\]
Note that due to $\iota_i^* = \pi_i$, $\bR_{1,1}^* = \bR_{1,1}$ and $(\bJ^1_{i,j})^* = - \bJ^1_{j,i}$. 
Moreover, $\dim \cH^1_2 < \infty$.
To proceed inductively, we now take note of the decomposition (starting with $i=1$)
\begin{gather}\label{dienstag1}
\cH = \bigoplus_{j=1}^{i+1} \cH^i_j\textrm{ with }\cH^i_2,\dotsc,\cH^i_{i+1}\textrm{ finite-dimensional},\\
\label{dienstag3}\bJ^i_{j,k} = 0 \textrm{ for }j,k=1, \dotsc, s\textrm{ with }\abso{j-k} \ge 2\,,\\
\label{dienstag4} \textrm{if }i \ge 2,\ \bJ^i_{j, j-1} \textrm{ is surjective for }j=2, \dotsc, i\,.
\end{gather}
Conditions \eqref{dienstag3} and \eqref{dienstag4} are void for $i=1$ but will be established for the next inductive steps.

In operator matrix form, $\bJ$ is given by
\[
\bJ =
\begin{bNiceArray}{ccccc}[margin,nullify-dots]
\bJ^i_{1,1} & -(\bJ^i_{2,1})^* & 0 & \cdots & 0 \\
\bJ^i_{2,1} & \bJ^i_{2,2} & -(\bJ^i_{3,2})^* & & \vdots \\
& \ddots & \ddots & \ddots & \\
\vdots & & \bJ^i_{i, i-1} & \bJ^i_{i, i} & -(\bJ^i_{i+1, i})^* \\
0 & \cdots & 0 & \bJ^i_{i+1, i} & \bJ^i_{i+1, i+1}
\end{bNiceArray}.
\]
If $\bJ^i_{i+1,i} = 0$ then the claim is established with $s = i+1$. If $\bJ^i_{i+1, i} \ne 0$ we obtain a decomposition of the form \eqref{dienstag1}--\eqref{dienstag4} with the upper index $i$ being replaced by $i+1$ as follows. 
Decomposing the codomain of $\bJ^i_{i+1,i} \colon \cH^i_i \to \cH^i_{i+1}$ as $\cH^i_{i+1} = \im \bJ^i_{i+1,i} \oplus (\im \bJ^i_{i+1,i})^\perp$ we set
\begin{align*}
\cH^{i+1}_j &\coleq \cH^i_j\textrm{ for }\quad j=1, \dotsc, i\,,  \\
\cH^{i+1}_{i+1} &\coleq \im \bJ^i_{i+1,i} \subset \cH_{i+1}^i\,, 
\\
\cH^{i+1}_{i+2} &\coleq (\im \bJ^i_{i+1,i})^\perp\subset \cH_{i+1}^i\,.
\end{align*}
With $\cH = \bigoplus_{j=1}^{i+2} \cH^{i+1}_j$ we now have $\bJ^{i+1}_{j,k} = \bJ^i_{j,k}$ for $j,k = 1, \dotsc, i$. It is clear that $\cH^{i+1}_2,\dotsc,\cH^{i+1}_{i+2}$ are finite-dimensional. We verify that $\bJ^{i+1}_{j,k} = 0$ for $\abso{j-k} \ge 2$: This is inherited for $j, k = 1, \dotsc, i$ and by skew-adjointness, it suffices to verify it for $j = i+1$ and $k = 1, \dotsc, i-1$ as well as for $j = i+2$ and $k = 1, \dotsc, i$. In the first case, we see that
\begin{align*}
\bJ^{i+1}_{i+1, k} &= \pi_{\cH^{i+1}_{i+1}} \circ \bJ \circ \iota_{\cH^{i+1}_k} = \pi_{\cH^{i+1}_{i+1}} \circ \pi_{\cH^i_{i+1}} \circ \bJ \circ \iota_{\cH^i_k} = \pi_{\cH^{i+1}_{i+1}} \circ \bJ^i_{i+1, k} = 0,
\end{align*}
because $\abso{i+1-k} \ge 2$. In the second case, we similarly have
\begin{align*}
\bJ^{i+1}_{i+2, k} &= \pi_{\cH^{i+1}_{i+2}} \circ \bJ \circ \iota_{\cH^{i+1}_k} = \pi_{\cH^{i+1}_{i+2}} \circ \pi_{\cH^i_{i+1}} \circ \bJ \circ \iota_{\cH^i_k} = \pi_{\cH^{i+1}_{i+2}} \circ \bJ^i_{i+1,k} = 0
\end{align*}
for $k = 1, \dotsc, i-1$ because then $\abso{i+1-k} \ge 2$ and also for $k=i$ because $\cH^{i+1}_{i+2} = (\im \bJ^i_{i+1, i})^\perp$. That $\bJ^{i+1}_{j, j-1}$ is surjective is inherited for $j = 2, \dotsc, i$, the case for $\bJ^{i+1}_{i+1, i}$ is seen from
\begin{align*}
\bJ^{i+1}_{i+1, i} = \pi_{\cH^{i+1}_{i+1}} \circ \bJ \circ \iota_{\cH^{i+1}_i} = \pi_{\cH^{i+1}_{i+1}} \circ \pi_{\cH^i_{i+1}} \circ \bJ \circ \iota_{\cH^i_i} = \pi_{\cH^{i+1}_{i+1}} \circ \bJ^i_{i+1, i}.
\end{align*}

Hence, we again are in the situation of \eqref{dienstag1}--\eqref{dienstag4} with $i$ replaced by $i+1$. This procedure is iterated until $\bJ^i_{i+1,i}=0$ holds, but we still
need to ascertain that this is the case after finitely many steps. 
To this end we note that, if $\bJ^i_{i+1, i} \ne 0$, then $1 \le \dim (\im \bJ^i_{i+1, i}) = \dim \cH^{i+1}_{i+1} \le \dim \cH^i_{i+1}$ and hence $\dim \cH^{i+1}_{i+2} = \dim \cH^i_{i+1} - \dim \cH^{i+1}_{i+1} < \dim \cH^i_{i+1}$. Consequently, if $\bJ^i_{i+1, i}$ does not become zero earlier, we will reach $\cH^{i+1}_{i+2} = \{0\}$ for $i = \dim \ker \bR$ 
and hence $\bJ^{i+1}_{i+2, i+1} = 0$. This implies $s \le \dim \ker \bR + 2$.

It is evident that, if $\bR$ is nonsingular, then $\cH^1_2 = \{0\}$ and $s=2$.
\end{proof}

\begin{remark}
If a nontrivial $\bJ_{s,s}$ is present in the staircase form, i.e., if $\cH_s \ne \{0\}$, then the operator $\bC = \bR - \bJ$ cannot be hypocoercive because $\bJ_{s,s}$ has only imaginary eigenvalues and $\cH_s$ is invariant under $\bJ_{s,s}$. Contrarily to the finite-dimensional setting, the staircase form cannot be used to determine whether the system in question satisfies \ref{B:Tmop} or the weaker condition \ref{p4'}.
So, in general, it does not determine the hypocoercivity index of $\bC$ (if it exists).
\end{remark}


\section{Example: Lorentz kinetic equation}\label{sec:lorentz}

In this section we consider the \emph{Lorentz kinetic equation}
\begin{equation}\label{lorentz}
 \partial_t f +\bv\cdot\nabla_\bx f = \sigma\left(\frac{1}{\meas(\sphere^{d-1})} \int_{\sphere^{d-1}} f\,\ud \bv - f \right)=:  \sigma (\tilde f - f)
\end{equation}
for the phase space distribution $f(\bx,\bv,t)$ with $(\bx,\bv,t) \in \torus^d \times \sphere^{d-1} \times \R^+$, some relaxation rate $\sigma>0$, and space dimension $d\ge2$. Here, $\tilde f$
denotes the mean of $f$ w.r.t.\ the velocity sphere $\sphere^{d-1}$.

This equation is a linear Boltzmann equation with collision operator $\cC f \coleq \sigma (\tilde f - f)$, which is local in position $\bx$. It describes the evolution of free particles (i.e., without external force) moving on the $d$-dimensional torus $\torus^d$ with speed $1$ (since the particle collisions preserve the kinetic energy and hence $|\bv|$). Originally, it was introduced to model a 3D-electron gas in metals \cite{lorentz1905}, but also its 2D-analog has been used to simulate phase transitions \cite{LoMa95}. 
It is well-known  (see e.g.~\cite[Theorem 3.1]{Golse}, \cite{UPG,hankwanleautaud}) that this equation exhibits exponential convergence to an equilibrium \makered{(but the decay rate is not quantified there)}:
\begin{theorem}\label{lorentz-HC}
The operator $\bA \coleq - \bv \cdot \nabla_{\bx} + \cC$ generates a $C_0$-semigroup on $\cH:=L^2(\torus^d \times \sphere^{d-1})$ and there exist constants $C,\lambda>0$ such that for any initial datum $f_0 \in \cH$ 
the solution of \eqref{lorentz} satisfies
\[ \left\| f(t) - f_\infty \right\|_{\cH} \le C e^{-\lambda t} \left\| f_0 \right\|_{\cH}, \]
where $f_\infty:=\frac{1}{(2\pi)^d\meas(\sphere^{d-1})} \int_{\torus^d \times \sphere^{d-1}} f_0(\bx,\bv) \,\ud \bv \,\ud \bx$ denotes the unique equilibrium corresponding to $f_0$.
\end{theorem}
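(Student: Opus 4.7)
My plan splits into two parts: generation of the $C_0$-semigroup, and the exponential decay estimate.

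For generation, I would decompose $\bA = T + \cC$, where $T = -\bv\cdot\nabla_\bx$ on its maximal $L^2$-domain is the free-transport operator with periodic boundary conditions, and $\cC$ is the collision operator. The transport operator generates the strongly continuous unitary group $(e^{tT}f)(\bx,\bv) = f(\bx-t\bv,\bv)$ on $\cH$ (characteristics on the torus are complete), while $\cC = \sigma(P-I)$ (with $P$ the orthogonal projection onto $\bv$-averaged functions) is bounded with $\|\cC\|_{\cB(\cH)} \leq \sigma$. The bounded perturbation theorem for $C_0$-semigroups, see \cite[Theorem III.1.3]{EngelNagel2000}, then immediately yields that $\bA$ generates a $C_0$-semigroup on $\cH$. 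Moreover, mass is conserved, so $f_\infty$ is exactly the orthogonal projection of $f_0$ onto $\Span\{1\} \subset \cH$; since $\|f_0-f_\infty\|_\cH \leq \|f_0\|_\cH$, it suffices to establish $\|f(t)-f_\infty\|_\cH \leq C e^{-\lambda t} \|f_0-f_\infty\|_\cH$ on the invariant subspace $\Span\{1\}^\perp$.

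The plan for the decay is to diagonalise in $\bx$ via the Fourier series $\cH = \bigoplus_{\bk \in \Z^d} \cH_\bk$ with each $\cH_\bk \cong L^2(\sphere^{d-1})$. On $\cH_\bk$ the operator $\bA$ acts as the \emph{bounded} operator $\bA_\bk g = -i(\bk\cdot\bv)g + \sigma(Pg - g)$, where now $P$ denotes the projection in $L^2(\sphere^{d-1})$ onto constants. For $\bk=0$, $\bA_0 = \sigma(P-I)$ equals $-\sigma I$ on $\Span\{1\}^\perp \subset \cH_0$, giving decay rate $\sigma$. For $\bk \neq 0$, I would apply Theorem \ref{cor:exp-stable} to the accretive bounded operator $\bC_\bk \coloneqq -\bA_\bk = i(\bk\cdot\bv) + \sigma(I-P)$. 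Its self-adjoint part $(\bC_\bk)_H = \sigma(I-P) \geq 0$ has one-dimensional kernel $\Span\{1\}$; for any nonzero constant $c$, $(\bC_\bk)_S c = ic(\bk\cdot\bv)$ is not constant on $\sphere^{d-1}$ (since $\bk \neq 0$, the function $\bv\mapsto \bk\cdot\bv$ has zero mean but is not identically zero), hence not in $\ker(\bC_\bk)_H$. Since $\dim\ker(\bC_\bk)_H = 1 < \infty$, Proposition \ref{propi} upgrades this to condition \ref{B:Tmop} with $m=1$ via Lemma \ref{lem:Op-Equivalence}. Hence every $\bC_\bk$ ($\bk\neq 0$) has hypocoercivity index exactly $1$, and is therefore hypocoercive by Theorem \ref{cor:exp-stable}.

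The main obstacle, and the reason Lemma \ref{lem1}(c) is technical, is the \emph{uniform}-in-$\bk$ estimate $\|e^{\bA_\bk t}\|_{\cB(\cH_\bk)} \leq C_0 e^{-\lambda_0 t}$ with constants $C_0, \lambda_0>0$ independent of $\bk \in \Z^d\setminus\{0\}$. Theorem \ref{cor:exp-stable} applied mode-by-mode only supplies $\bk$-dependent constants, and summing them by Parseval collapses unless they are uniform. For any $R>0$ the finitely many $\bk$ with $0 < |\bk| \leq R$ pose no difficulty: hypocoercivity plus continuity give a uniform bound. The challenge is $|\bk|\to\infty$: although the self-adjoint part $\sigma(I-P)$ is fixed, the skew part $i(\bk\cdot\bv)$ pushes the essential spectrum of $\bA_\bk$ onto the vertical segment $\{-\sigma + it : |t| \leq |\bk|\}$ (it is the essential range of $-i\bk\cdot\bv - \sigma$, preserved under the rank-one perturbation $\sigma P$). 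One then has to show the discrete eigenvalues of $\bA_\bk$ — the roots of the dispersion relation
\[ 1 = \frac{\sigma}{\meas(\sphere^{d-1})} \int_{\sphere^{d-1}} \frac{\ud\bv}{\sigma+\lambda+i(\bk\cdot\bv)} \]
— stay in a half-plane $\Re\lambda \leq -\lambda_0 < 0$ uniformly in $\bk$, and likewise obtain a uniform bound on the norm-constants $C_\bk$ (e.g.\ via a modified Lyapunov functional analogous to \cite{DoMoSc15}, or via explicit resolvent estimates exploiting velocity averaging). Combining the uniform mode-wise estimate with Parseval's identity applied to the orthogonal decomposition $\cH = \bigoplus \cH_\bk$ then yields the desired global decay bound, completing the proof.
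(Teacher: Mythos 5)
Your approach is sound in outline, but you should be aware that the paper does not actually prove Theorem~\ref{lorentz-HC}; it \emph{cites} it from the literature (\cite[Theorem 3.1]{Golse}, \cite{UPG,hankwanleautaud}) and immediately remarks that the rate is not quantified there. So there is no ``paper's own proof'' of this statement in the strict sense. What the paper does prove is the quantitative refinement in the Proposition at the end of Section~\ref{full-system}, for $d=2$, and your strategy (generation by bounded perturbation of free transport, Fourier decomposition in $\bx$, mode-wise hypocoercivity via Theorem~\ref{cor:exp-stable}, then Parseval) is precisely the strategy the paper uses for that refinement. Your generation argument is correct, and your identification of $m_{HC}=1$ per nonzero mode via Proposition~\ref{propi} and Lemma~\ref{lem:Op-Equivalence} is exactly what Lemma~\ref{lem1}(a) does.

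The genuine gap in your proposal is the one you yourself flag: the uniformity in $\bk$ is stated as a task (``one then has to show \ldots''), not carried out, and it cannot be dispensed with. Mode-by-mode, Theorem~\ref{cor:exp-stable} gives $\|e^{\bA_\bk t}\|\le C_\bk e^{-\lambda_\bk t}$ with no control on how $C_\bk$ or $\lambda_\bk$ behave as $|\bk|\to\infty$. Your observation that the essential spectrum of $\bA_\bk$ sits on $\Re\lambda=-\sigma$ (a compact rank-one perturbation of the multiplication operator by $-\sigma-i\,\bk\cdot\bv$) correctly bounds the essential part uniformly, but it does not bound the discrete eigenvalues nor, more importantly, the multiplicative constants $C_\bk$. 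The paper closes this gap (in $d=2$) constructively: Lemma~\ref{lem1}(b) builds an explicit $\bk$-dependent weight $\bY_{|\bn|}$ satisfying a uniform Lyapunov inequality $\bC_{|\bn|}^*\bY_{|\bn|}+\bY_{|\bn|}\bC_{|\bn|}\ge 2\lambda_0\bY_{|\bn|}$, which yields $C_\bk\le\sqrt{3}$ and a single rate $\lambda_0$ for all modes. In contrast, the cited references reach Theorem~\ref{lorentz-HC} by a softer, non-modal route (compactness/averaging-lemma arguments giving an essential spectral gap for the full semigroup), which produces some $\lambda>0$ without quantifying it---precisely the non-quantitative nature the paper is correcting. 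So your proposal identifies the right obstacle, but as written it would need either the paper's explicit Lyapunov construction or the compactness argument from the references to be a complete proof.
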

In other words, this theorem states that $\bA$ is hypocoercive in the sense of Definition \ref{def:hypoco} on $\widetilde \cH = (\ker \bA)^\perp \subseteq \cH$ with the subspace topology. To see this, note that 
$\bA$ is an unbounded operator on $\cH$ with domain
\[ \cD(\bA) = \{ f \in \cH\ |\ \bv \cdot \nabla_{\bx} f \in \cH\}\,. \]
Following \S\ref{sec:HC}, it can be decomposed as
\[ -\bA = \bC = \bR - \bJ \]
with bounded self-adjoint part $\bR \in \cB(\cH)$ given by
\[ \bR(f)(\bx,\bv) = \sigma \Big(f(\bx,\bv) -  \frac{1}{\meas(\sphere^{d-1})} \int_{\sphere^{d-1}} f(\bx,\bw)\,\ud \bw \Big) \ge 0,
\]
and unbounded skew-adjoint part
\[ \bJ(f)(\bx,\bv) = -\bv \cdot \nabla_\bx f(\bx,\bv) \]
where $\cD(\bJ) = \cD(\bA)$. This implies that
\[ \ker \bA = \ker \bR \cap \ker \bJ = \Span\{1\}, \]
i.e., the kernel of $\bA$ consists of all constant functions on $\torus^d \times \sphere^{d-1}$. 
These are the \emph{global equilibria} of \eqref{lorentz}, while the $\bx$-dependent, but constant-in-$\bv$ functions of $\ker \bR=L^2(\torus^d_{\bx})\times \Span\{1\}$ form the \emph{local equilibria} of \eqref{lorentz}.

\makered{We will now analyze \eqref{lorentz} in $d=2$, setting $\sigma=1$ for simplicity. By constructing a Lyapunov functional, we will derive an explicit, quantitative decay rate and we will determine the hypocoercivity index of \eqref{lorentz} and thus its short-time behavior. }
Because $\bA$ is an unbounded operator, our theory developed so far does not apply directly. 
\makered{Therefore, we proceed in two steps: First, we make a mode by mode analysis of~\eqref{lorentz}. Then, this can be combined into results for the infinite dimensional system~\eqref{lorentz}. We note that the analysis of this example should mostly serve to present a general method that is applicable to a wide class of semi-dissipative hypocoercive problems, e.g.\ to the Goldstein-Taylor system in~\cite{AESW2021}.}

\subsection{\makered{Mode by mode analysis}}\label{mode-ana}

\makered{Similarly to \cite{AAM22Oseen,AAC16, BDMMS20} we} 
employ a Fourier decomposition on $\torus^2$, i.e.,
\[ f(\bx,\bv,t) = \frac{1}{2\pi}  \sum_{\bn \in \Z^2} f_\bn(\bv,t) e^{i \bn\cdot \bx}\textrm{ with } f_\bn(\bv,t) =  \frac{1}{2\pi} \langle f(\bx,\bv,t), e^{i \bn \cdot \bx} \rangle \textrm{ for }\bn \in \Z^2,
\]
to study the individual Fourier modes $f_\bn(\bv,t)$ of $f(\bx,\bv,t)$. This leads to the family of equations
\[ \partial_t f_\bn + i \bv \cdot \bn f_\bn = \cC f_\bn,\quad \bn \in \Z^2,\quad t>0, \]
or
\begin{equation} \label{lorentz_mode_n} 
 \partial_t f_\bn = - (\bR - \bJ_\bn) f_\bn 
\end{equation}
with $\bR, \bJ_\bn \in \cB(L^2(\sphere^{1}))$ given by $\bR(\phi) = \phi - \tilde \phi$ and $(\bJ_\bn \phi)(\bv) = -i (\bv \cdot \bn) \phi(\bv)$ for $\phi \in L^2(\sphere^{1})$. Clearly, $\bR$ is self-adjoint and $\bJ_\bn$ is skew-adjoint. 
With reference to Remark \ref{remark25}\ref{remark25.3} we note that the operator $\phi\mapsto \tilde\phi$ is compact on $L^2(\sphere^1)$, and hence $\bR$ is not. \makered{In the sequel we use the notation $\bC_\bn:=\bR-\bJ_\bn$, and the corresponding propagator is $\bP_\bn(t) :=e^{-\bC_\bn t}$.}

For $\bn=0$, $\bR - \bJ_0 = \bR$ is the projection onto $(\ker \bR)^\perp$, and its one-dimensional kernel $\ker \bR=\Span\{ 1 \}$ consists of all constant functions on $\sphere^1$. Hence, $\bR$ is coercive on $(\ker \bR)^\perp$ with $\kappa=1$.

For $\bn \ne 0$ we see that $\ker (\bR - \bJ_\bn) = \ker \bR \cap \ker \bJ_\bn = \{0\}$. 
Since $\bR$ is not coercive on $L^2(\sphere^{1})$, the hypocoercivity index of $\bR - \bJ_\bn$ (if it exists) is positive.

The following lemma states that \makered{each} operator $\bR - \bJ_\bn$ in fact is hypocoercive \makered{and that they all satisfy uniform (in $\bn$) decay estimates for short and large times. We note already in advance that such \emph{uniform} estimates are challenging for two reasons: On the one hand, $\|\bP_\bn(t)\|$ is typically not smooth on $[0,\infty)$ but only Lipschitz continuous (even for matrix exponentials). On the other hand, for hypocoercive modal systems like \eqref{lorentz_mode_n}, the skew-adjoint operator typically scales like the Euclidian norm of~$\bn$ which we denote by~$|\bn|$. Hence, the hypocoercive mixing increases with $|\bn|$ and the sequence $\|\bP_\bn(t)\|$ ``tends'' to decrease with increasing~$|\bn|$, but not pointwise in~$t$,  due to the non-smoothness of $\|\bP_\bn(t)\|$. Numerically, this can be verified easily for \eqref{lorentz_mode_n}.}

\begin{lemma}\label{lem1}
Let $d=2$. For each \makered{mode} $\bn = (n_1, n_2) \in \Z^2 \setminus\{0\}$ \makered{the following holds:
\begin{enumerate}
    \item[(a)] The hypocoercivity index is $m_{HC}(\bR - \bJ_\bn) = 1$. This holds uniformly in $\bn$ in the sense that there is a constant $\kappa>0$ (independent of $\bn$) such that for all $\bn \in \Z^2 \setminus\{0\}$ we have
    \[ \bR + \bJ_\bn \bR \bJ_\bn^* \ge \kappa \bI. \]
    \item[(b)] The norm of the solution to~\eqref{lorentz_mode_n} decays exponentially for long times like
    \begin{equation}\label{unif-decay}
        \|f_\bn(t)\|_{L^2(\sphere^{1})}
        \le \min\Big[1,\sqrt{\tfrac{2|\bn|+1}{2|\bn|-1}} e^{-\lambda_0 t} \Big] \|f_\bn(0)\|_{L^2(\sphere^{1})}    
        \le \sqrt3 e^{-\lambda_0 t}\|f_\bn(0)\|_{L^2(\sphere^{1})} 
    \end{equation}
    for $t\ge0$ with the (non-sharp) rate $\lambda_0=\frac12-\frac{1}{6\sqrt2}-\frac13\sqrt{\frac{7}{16}+\frac{1}{\sqrt8}}\approx0.0857$.
    \item[(c)] The propagator norm decays (algebraically) for short times like
\begin{equation}\label{Pn-decay}
    \|\bP_\bn(t)\|_{\cB(L^2(\sphere^1))} \le 1-c t^3,\quad 0\le t\le \tau,
\end{equation}
and the $\bn$-independent constants $c,\,\tau>0$ are given explicitly in the proof. 
\end{enumerate} }
\end{lemma}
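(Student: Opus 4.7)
My plan is to work in the orthonormal Fourier basis $\{e_k(\theta):=e^{ik\theta}/\sqrt{2\pi}\}_{k\in\Z}$ on $\sphere^1$, parametrized by $\bv=(\cos\theta,\sin\theta)$. By rotational invariance of the problem, I may reduce to $\bn=(|\bn|,0)$; then $\bR$ is the orthogonal projection onto $\overline{\Span}\{e_k:k\ne 0\}$ and $\bJ_\bn$ becomes the tridiagonal operator $\bJ_\bn e_k=-\tfrac{i|\bn|}{2}(e_{k-1}+e_{k+1})$. For $f=\sum_k a_k e_k$ this yields $\|\bR f\|^2=\sum_{k\ne 0}|a_k|^2$ and $\|\bR\bJ_\bn^* f\|^2=\tfrac{|\bn|^2}{4}\sum_{k\ne 0}|a_{k-1}+a_{k+1}|^2$, reducing all three parts to explicit computations on $\ell^2(\Z)$.

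\emph{Part (a).} Since $\ker\bR=\Span\{e_0\}\ne\{0\}$, we have $\mHC\ge 1$, so it suffices to verify the uniform bound $\bR+\bJ_\bn\bR\bJ_\bn^*\ge\kappa\bI$. Using the explicit formulas above and the trivial estimate $|\bn|^2/4\ge 1/4$ for $\bn\in\Z^2\setminus\{0\}$, the claim reduces to the $\bn$-free quadratic form inequality $\sum_{k\ne 0}|a_k|^2+\tfrac{1}{4}\sum_{k\ne 0}|a_{k-1}+a_{k+1}|^2\ge\kappa\sum_k|a_k|^2$. I would argue by compactness: if a sequence $(a^{(n)})\subset\ell^2(\Z)$ with $\|a^{(n)}\|=1$ drove the LHS to $0$, the first sum would force $|a_0^{(n)}|\to 1$ and $a_k^{(n)}\to 0$ for $k\ne 0$; but the $k=\pm 1$ contributions to the mixing sum would then satisfy $|a_{\mp 2}^{(n)}+a_0^{(n)}|^2\to 1$, contradicting the hypothesis. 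This gives a uniform $\kappa>0$ and thus $\mHC(\bR-\bJ_\bn)=1$ for every nonzero $\bn$.

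\emph{Part (b).} Following the standard HC-index $1$ recipe (cf.\ \cite{AAC,AAC16,AAM22Oseen}), I would construct a mode-adapted Lyapunov functional $E_\bn(f):=\|f\|^2+\varepsilon_\bn\,\Re\langle\bR f,\bN_\bn\bR f\rangle$, where $\bN_\bn$ is a bounded self-adjoint auxiliary operator built from $\bJ_\bn$ acting on $(\ker\bR)^\perp$, and where $\varepsilon_\bn>0$ is calibrated so that (i)~$\|f\|^2\le E_\bn(f)\le\tfrac{2|\bn|+1}{2|\bn|-1}\|f\|^2$ and (ii)~$\tfrac{d}{dt}E_\bn(\bP_\bn(t)f)\le-2\lambda_0 E_\bn(\bP_\bn(t)f)$. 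After expanding the time derivative by means of $\dot u=-\bC_\bn u$, combining the dissipative contribution $-2\|\bR u\|^2$ with the cross term produced by $\bN_\bn$, and using the coercivity $\bR+\bJ_\bn\bR\bJ_\bn^*\ge\kappa\bI$ from part~(a), condition~(ii) reduces to a $2\times 2$ quadratic form inequality whose optimal constants give the explicit $\lambda_0$ stated. Gronwall's inequality together with~(i) then yields~\eqref{unif-decay}; the $\bn$-independence of $\lambda_0$ reflects the uniformity of $\kappa$.

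\emph{Part (c).} This is the main obstacle, because a direct use of Theorem~\ref{thm:short-t-decay:sandwich} produces a remainder constant scaling like $\|\bC_\bn\|\sim|\bn|$, so the validity interval $\tau$ would shrink with $|\bn|$. My plan is a two-regime argument with crossover $t^*:=c_0/|\bn|$. On $[0,t^*]$, I would write $\|\bP_\bn(t)x\|^2=1-2\int_0^t\|\bR\bP_\bn(s)x\|^2\,ds$ and Taylor-expand the integrand in Fourier modes. For the worst case $x$ close to $\ker\bR$ (heuristically $x\approx e_0$) one gets $\|\bR\bP_\bn(s)x\|^2=\tfrac{|\bn|^2}{2}s^2+\bigO(|\bn|^4 s^4)$, and the remainder is dominated by the leading term precisely for $s\lesssim 1/|\bn|$, yielding $\|\bP_\bn(t)\|^2\le 1-c_0 t^3$ uniformly in $\bn$. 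For $x$ away from $\ker\bR$ the bound $\|\bP_\bn(t)x\|^2\le 1-c\|\bR x\|^2 t$ is strictly better. On $[t^*,\tau]$ I would combine part~(b) with $\|\bP_\bn(t)\|\le 1$: the prefactor $\sqrt{(2|\bn|+1)/(2|\bn|-1)}=1+\bigO(1/|\bn|)$ is absorbed by $e^{-\lambda_0 t}$ once $c_0$ is chosen large enough (e.g.\ $c_0\ge 1/\lambda_0$), leaving a linear upper bound that majorizes $1-ct^3$ on any fixed $[0,\tau]$. The hardest point of the program is the clean, $\bn$-uniform control of the $\bigO(|\bn|^4 s^4)$ remainder in the first regime, which requires tracking the propagation speed of the discrete tridiagonal flow generated by $\bJ_\bn$ across Fourier modes; this is the technical reason why the full argument is deferred to Appendix~\ref{sec:Proof_of_part_c}.
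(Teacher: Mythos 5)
Your overall architecture (Fourier reduction, qualitative coercivity argument, Lyapunov functional, two-regime short-time/long-time splicing) tracks the paper's three-part structure, but there is a genuine flaw in part (b), and your part (a), while valid, is weaker than what the paper obtains.

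\textbf{Part (a).} Your sequential-contradiction argument is correct and does establish the existence of a uniform $\kappa>0$: if the quadratic form $\sum_{k\neq 0}|a_k|^2+\tfrac14\sum_{k\neq 0}|a_{k-1}+a_{k+1}|^2$ tends to $0$ along a normalized sequence, then $\sum_{k\neq 0}|a_k^{(n)}|^2\to 0$ forces $|a_0^{(n)}|\to 1$ and $a_2^{(n)}\to 0$, so the $k=1$ contribution $|a_0^{(n)}+a_2^{(n)}|^2\to 1\neq 0$. This is a genuinely different (more elementary, but purely qualitative) route than the paper's, which represents $\bR+\bJ_{(1,0)}\bR\bJ_{(1,0)}^*$ as an explicit doubly-infinite matrix $\bY$ and subtracts nonnegative $3\times 3$ blocks to obtain the optimal explicit constant $\kappa=\tfrac{3-\sqrt5}{2}$. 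Your approach produces no usable value of $\kappa$; the paper's value is quoted again in the proposition of \S\ref{full-system}, so the explicit bound is part of the advertised content.

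\textbf{Part (b) — genuine gap.} The proposed Lyapunov functional $E_\bn(f)=\|f\|^2+\varepsilon_\bn\,\Re\langle\bR f,\bN_\bn\bR f\rangle$ cannot satisfy your condition~(ii) with $\lambda_0>0$. Take $f=e_0\in\ker\bR$. Then $\bR f=0$, so $E_\bn(e_0)=\|e_0\|^2=1$. Along the flow, $\tfrac{d}{dt}\|\bP_\bn(t)e_0\|^2\big|_{t=0}=-2\|\sqrt\bR\,e_0\|^2=0$, and since $\bR\bP_\bn(t)e_0=\bigO(t)$, the perturbation term is $\bigO(t^2)$, so $\tfrac{d}{dt}E_\bn(\bP_\bn(t)e_0)\big|_{t=0}=0$ while $-2\lambda_0 E_\bn(e_0)=-2\lambda_0<0$. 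Thus (ii) fails at $t=0$ for initial data in $\ker\bR$. The structural reason is that $\langle\bR f,\bN_\bn\bR f\rangle$ is supported entirely on $(\ker\bR)^\perp$, so the functional carries no cross-coupling between $\ker\bR$ and its complement — but such a cross term is exactly what a hypocoercive Lyapunov functional must contain (in the Villani/DMS constructions it appears via a projection $\Pi$ onto $\ker\bR$ composed with the transport, not via $\bR$). The paper instead equips $L^2(\sphere^1)$ with the weight matrix $\bY_{|\bn|}$ that perturbs the identity \emph{off-diagonally at the pair of indices $(j,k)\in\{0,1\}$}, i.e.\ it couples the $j=0$ mode (which spans $\ker\bR$) to the $j=1$ mode; that is precisely the ingredient your ansatz is missing. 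The claim that your construction would "give the explicit $\lambda_0$ stated" is therefore not substantiated.

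\textbf{Part (c).} Your two-regime strategy — a Taylor/propagation estimate on the shrinking window $[0,c_0/|\bn|]$ spliced with the exponential bound of part~(b) on $[c_0/|\bn|,\tau]$ — is the same approach the paper uses in Appendix~\ref{sec:Proof_of_part_c}, and you correctly identify the $\bn$-uniform control of the remainder as the technically delicate step. No objection here beyond the fact that your part~(b), on which this splice relies, is not yet sound.
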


\begin{proof}
\noindent
\makered{\underline{Part (a):}} According to Lemma \ref{lem:Op-Equivalence} \ref{B:Tmop} we need to show that
\begin{equation}\label{est1}
\bR + \bJ_\bn \bR \bJ_\bn^* \ge \kappa \bI
\end{equation}
for some $\kappa>0.$
Since the operator $\bR$ is invariant under rotations of $\sphere^1$ and $\bJ_{(n_1, 0)}$ is a multiple of $\bJ_{(1,0)}$, it suffices to prove the result for $\bJ_{(1,0)}$ in order to obtain the coercivity estimate \eqref{est1} uniformly in all $\bn \ne 0$.

The proof of \eqref{est1} in $\cB(L^2(\sphere^1))$ is based on a basis representation of $\bR$ and $\bJ_{(1,0)} \bR \bJ_{(1,0)}^* = v_1 \circ \bR \circ v_1$, with $\bv(\varphi) = (\cos\varphi, \sin\varphi)$ and $v_1 = \cos \varphi$. Using the orthonormal basis
\[ \phi_j(\varphi) = \frac{1}{\sqrt{2\pi}} e^{ij\varphi},\qquad j \in \Z,\: \varphi \in [0,2\pi]\, \]
of $L^2(\sphere^1)$, $\bR$ is given by $\bR (\phi_j) = (1 - \delta^j_0)\phi_j$. 
For $v_1 \circ \bR \circ v_1$, a short calculation gives
\[ \langle v_1 \circ \bR \circ v_1 \phi_j, \phi_k \rangle = \frac{1}{2} \delta^k_j + \frac{1}{4} ( \delta^2_{j-k} + \delta^{-2}_{j-k}) - \frac{1}{4} (\delta^1_j + \delta^{-1}_j)(\delta^1_k + \delta^{-1}_k). \]
Hence, with respect to the basis $(\phi_j)_{j\in\N}$, the operator $\bR + v_1 \circ \bR \circ v_1$ can be represented by the symmetric, ``double-infinite matrix'' $\bY$:
\NiceMatrixOptions{cell-space-limits = 1pt}

{\scriptsize
\begin{equation}\label{op:T}
\setlength\arraycolsep{3pt}
\bY =
\begin{bNiceArray}{cccccc|c|cccccc}[margin,last-row,last-col]
\ddots & \ddots & \ddots & \ddots & \ddots & & & & & & & & &\\
 & \tfrac14 & 0 & 1+\tfrac12 & 0 & \tfrac14 & & & & & & & &\\ 
 & & \tfrac14 & 0 & 1+\tfrac12 & 0 & \tfrac14 & & & & & & &\\ 
 &  & & \tfrac14 & 0 & 1+\tfrac14 & 0 & 0 &  &  &  &  & & k=-1 \\
\hline
& & & & \tfrac14 & 0 & \tfrac12 & 0 & \tfrac14 & & & & & k=0 \\ 
\hline
 & & & & & 0 & 0 & 1+\tfrac14 & 0 & \tfrac14 & & & & k=1 \\ 
 & & & & & & \tfrac14 & 0 & 1+\tfrac12 & 0 & \tfrac14 & & & \\ 
 & & & & & & & \tfrac14 & 0 & 1+\tfrac12 & 0 & \tfrac14 & & \\ 
 & & & & & & & & \ddots & \ddots & \ddots & \ddots & \ddots &\\
 & & j=-4 & j=-3 & j=-2 & j=-1 & j=0 & j=1 & j=2 & j=3 & j=4 & & & 
\end{bNiceArray}
\end{equation}
}
We finally establish that $\bY \ge \kappa \bI$ for $\kappa := \frac{3-\sqrt{5}}{2} \approx 0.38$ by subtracting
suitable non-negative $3 \times 3$-blocks from $\bY$ such that only a diagonal matrix with entries $\ge \kappa$ remains. More precisely, subtracting
\[
\begin{bmatrix}
\frac{1}{4} - \frac{\kappa}{2} & 0 & \frac{1}{4} \\
0 & 0 & 0 \\
\frac{1}{4} & 0 & \frac{5}{4} - \frac{\kappa}{2}
\end{bmatrix}
\ge 0
\]
with its upper left entry aligned at the indices $(0,0)$, $(1,1)$, $(2,2)$, $\dotsc$ as well as its antidiagonal transpose with lower right entry aligned at the indices $(0,0)$, $(-1, -1)$, $(-2, -2)$, $\dotsc$ yields
\[ \bY \ge \diag ( \dotsc, \kappa, \kappa, 1 + \frac{\kappa}{2}, \kappa, 1 + \frac{\kappa}{2}, \kappa, \kappa, \dotsc ) \ge \kappa \bI\,, \]
and hence \eqref{est1}. 
Note that $\kappa$ was chosen as the largest value such that the above $3 \times 3$-matrix is nonnegative.
\makered{More precisely, $\kappa=\frac{3-\sqrt5}{2}$ is the optimal constant for the estimate $\bY \ge \kappa \bI$. This can be seen by computing numerically the minimal eigenvalue for a sequence of finite dimensional matrices of form \eqref{op:T}, with increasing dimension.}
\\

\noindent
\makered{\underline{Part (b):}
We proceed as in \S4.3 of \cite{AAC16} for the 1D BGK equation and introduce for each mode a weighted norm that exhibits purely exponential decay. As in part (a) we can identify $\bJ_\bn$ (up to a rotation) with $|\bn| \bJ_{(1,0)}$. For the latter operator we have  
$$
  \bJ_{(1,0)}\phi=-iv_1\phi=-\frac{i}{2}(e^{i\varphi}+e^{-i\varphi})\phi
$$
with the basis representation
$$
  \langle \bJ_{(1,0)}\phi_j,\phi_k\rangle=-\frac{i}{2}(\delta_{j+1}^k+\delta_{j-1}^k).
$$
Hence, it can be represented by the skew-symmetric, ``doubly-infinite'' matrix
\begin{equation}
\begin{split}\label{J10-repr}
&-\frac{i}{2} \left[ \begin{array}{c|c|c|c|c}
\ddots & & & & \\
 \hline
 1 & 0 & 1 & & \\
 \hline
 & 1 & 0 & 1 & \\
 \hline
 & & 1 & 0 & 1 \\
 \hline
 & & & & \ddots
\end{array}\right] 
{\scriptsize 
\begin{array}{l}
\\[-0.4em]  \\[0.6em] k=0 \\[0.6em] \\[0.5em]
\end{array} } , \\
& {\scriptstyle\hspace{6.3em} j=0 } 
\end{split}
\end{equation}
while the operator $\bR$ corresponds to $\diag(...,1,1,0,1,1,...)$. Up to the mentioned rotation of the coordinate system, the evolution of $f_\bn$ is determined by
$$
  \bC_{|\bn|}:= \bR-|\bn|\bJ_{(1,0)} \in \mathcal B(L^2(\sphere^1)),\quad \bn\in\Z^2\setminus\{0\}\,.
$$
For each $\bn$ we now introduce a weighted norm that is equivalent to $\|.\|_{L^2(\sphere^1)}$. The weight matrix $\bY_ {|\bn|}$ will be chosen such that it can compensate the lack of coercivity of $\bR$ in the component $j=k=0$ by including the ``rotational effect'' of $\bJ_\bn$.

With the ``doubly-infinite'' weight matrices
\begin{equation}
\begin{split}\label{Pn}
&\bY_{|\bn|}:= \left[ \begin{array}{cc|c|c|cc}
\ddots & & & & & \\
 & 1 & & & & \\
 \hline
 & & 1 & -i\frac{\alpha}{|\bn|} & & \\
 \hline
 & & \phantom{-}i\frac{\alpha}{|\bn|} & 1 & &  \\
 \hline
 & & & & 1 &  \\
 & & & & & \ddots
\end{array}\right] 
{\scriptsize 
\begin{array}{l}
\\[-0.7em] k=-1  \\[0.65em] k=0 \\[0.65em] k=1 \\[0.65em] k=2
\end{array} } , \quad |\bn|\ge1 \,,\\
& {\scriptstyle\hspace{5em} j=-1 \scriptstyle\hspace{1.8em} j=0 \scriptstyle\hspace{1.8em} j=1 \scriptstyle\hspace{1.8em} j=2 } 
\end{split}
\end{equation}
with some parameter $\alpha\in\R$ to be determined, we want to satisfy the Lyapunov inequalities
\begin{equation}\label{Lyap-ineq}
    \bC^*_{|\bn|}\bY_{|\bn|}+\bY_{|\bn|}\bC_{|\bn|}  \ge 2 \lambda_0 \bY_{|\bn|}\,,\quad |\bn|\ge1\,.
\end{equation}
The left-hand side reads
\begin{equation}
\begin{split}\label{Lyap-inequ-lhs}
& \left[ \begin{array}{cc|c|c|c|c|cc}
\ddots & & & & & & &\\
 & 2 & & & & & & \\
 \hline
 & & 2 & & -\frac{\alpha}{2} & & & \\
 \hline
 & & & \alpha & -i\frac{\alpha}{|\bn|} & \frac{\alpha}{2} & &  \\
 \hline
 & &-\frac{\alpha}{2} & i\frac{\alpha}{|\bn|} & 2-\alpha & & & \\
 \hline
 & & & \frac{\alpha}{2} & & 2 &  &  \\
 \hline
 & & & & & & 2 &  \\
 & & & & & & & \ddots
\end{array}\right] 
{\scriptsize 
\begin{array}{l}
\\[-0.7em] k=-1  \\[0.65em] k=0 \\[0.65em] k=1 \\[0.65em] k=2
\end{array} } ,\\
& {\scriptstyle\hspace{4.7em} j=-1 \scriptstyle\hspace{0.8em} j=0 \scriptstyle\hspace{1.4em} j=1 \scriptstyle\hspace{1em} j=2 } 
\end{split}
\end{equation}
and we choose $\alpha=\frac12$ (which is close to the numerically determined optimal value $\alpha_0\approx0.45$). Then we estimate the essential $4\times4$ submatrix of \eqref{Lyap-inequ-lhs}, i.e.\ its not purely diagonal part:
\begin{equation*}
\begin{split}
 \bZ_{|\bn|}:=\left[ \begin{array}{cccc}
 2 & 0 & -\frac{1}{4} & 0 \\
 0 & \frac12 & -i\frac{1}{2|\bn|} & \frac{1}{4} \\
 -\frac{1}{4} & i\frac{1}{2|\bn|} & \frac32 & 0 \\
  0 & \frac{1}{4} & 0 & 2   
 \end{array}\right] 
&\ge 
 \left[ \begin{array}{cccc}
 2 & 0 & -\frac{1}{4} & 0 \\
 0 & 1-\frac{1}{\sqrt2} & 0 & \frac{1}{4} \\
 -\frac{1}{4} & 0 & 1-\frac{1}{\sqrt2} & 0 \\
  0 & \frac{1}{4} & 0 & 2   
 \end{array}\right] \\
& \ge 3\lambda_0\bI \ge2\lambda_0 \left[\bY_{|\bn|}\right]_{-1\le j,k\le2} \,,\quad |\bn|\ge1\,, 
\end{split}
\end{equation*}
and this verifies \eqref{Lyap-ineq}. Here we have used that the eigenvalues of $\bY_{|\bn|}$ are $1\pm\frac{1}{2|\bn|}$ and~$1$; in particular, those of $\bY_1$ are $\frac12$, $\frac32$ and $1$. We note that $3\lambda_0$ is also the smallest eigenvalue of $\bZ_1$, and the smallest eigenvalues of $\bZ_{|\bn|}$ with $|\bn|>1$ are larger.

Using the Fourier decomposition 
$$
  f_\bn(\varphi,t) = \sum_{j \in \Z} \hat f_{\bn j}(t) \phi_j(\varphi) 
$$
we introduce the weighted norm
$$
  \left\|\big(\hat f_{\bn j}\big)_{j\in\Z}\right\|^2_{\bY_{|\bn|}} :=
  \sum_{j,n\in\Z} \hat f_{\bn j} \big(\bY_{|\bn|}\big)_{jk} \overline{\hat f_{\bn k}}\,.
$$
Using the time evolution of $f_\bn$ and \eqref{Lyap-ineq}, we obtain for $|\bn|\ge1$ (proceeding as in (4.6), (4.7) of \cite{AAC16}):
\begin{eqnarray*}
    \|f_\bn(t)\|_{L^2(\sphere^1)}^2
    &\le& \frac{1}{1-\frac{1}{2|\bn|}}  
    \left\|\big(\hat f_{\bn j}(t)\big)_{j\in\Z}\right\|^2_{\bY_{|\bn|}}
    \le \frac{1}{1-\frac{1}{2|\bn|}}  
    \left\|\big(\hat f_{\bn j}(0)\big)_{j\in\Z}\right\|^2_{\bY_{|\bn|}} e^{-2\lambda_0 t}
\\
    & \le& \frac{1+\frac{1}{2|\bn|}}{1-\frac{1}{2|\bn|}}  
    \|f_\bn(0)\|^2_{L^2(\sphere^1)} e^{-2\lambda_0 t}\,.
\end{eqnarray*}
Noting that $\frac{1+\frac{1}{2|\bn|}}{1-\frac{1}{2|\bn|}} =\frac{2|\bn|+1}{2|\bn|-1}\le 3$ for $|\bn|\ge1$, and taking the minimum of this estimate and the initial norm $\norm{f_\bn(0)}^2_{L^2(\sphere^1)}$ kept constant (due to the semi-dissipativity of \eqref{lorentz_mode_n}) completes the proof of Part (b).\\
}

\noindent
\makered{\underline{Part (c):}
To derive the uniform estimate \eqref{Pn-decay} we combine a short-term decay estimate for the initial phase $[0,\frac{\tau}{|\bn|}]$ (which shrinks w.r.t.\ $|\bn|$) with the long-term decay estimate \eqref{unif-decay} for the remaining time interval $[\frac{\tau}{|\bn|},\tau]$, see Figure~\ref{fig:sketch_of_proof}. The proof of the uniform estimate~\eqref{Pn-decay} is given in Appendix~\ref{sec:Proof_of_part_c}.

\begin{figure}
  \centering
  \includegraphics[width=0.8\textwidth]
  {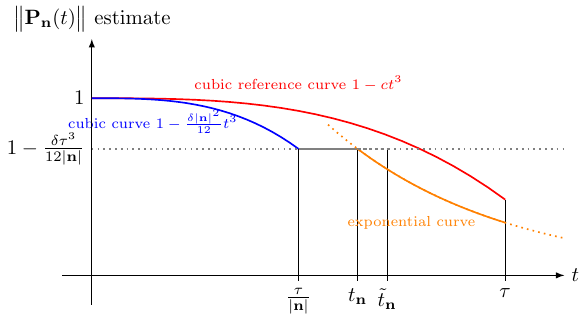}
  \caption{To derive the uniform estimate $\|\bP_\bn(t)\|_{\cB(L^2(\sphere^1))} \le 1-c t^3$ for $0\le t\le \tau$ in~\eqref{Pn-decay}, we combine a short-term decay estimate for the initial phase $[0,\frac{\tau}{|\bn|}]$  (that shrinks w.r.t.\ $|\bn|$) with the long-term decay estimate \eqref{unif-decay} for the remaining time interval $[\frac{\tau}{|\bn|},\tau]$.}
  \label{fig:sketch_of_proof}
\end{figure}
}

\end{proof}

\subsection{\makered{Analysis as infinite dimensional dynamical system}}\label{full-system}

\makered{We remark that both the decay estimates \eqref{unif-decay}, \eqref{Pn-decay} and the hypocoercivity estimate~\eqref{est1} are uniform with respect to $\bn \in \Z^2\setminus\{0\}$. This will allow us to combine all the modal results from \S\ref{mode-ana} to establish the analogous results for the full system \eqref{lorentz}.}

\makered{To this end we} define the hypocoercivity index directly with the operators $\bR(f) \coleq f - \tilde f$, $\bJ = -\bv \cdot \nabla_\bx$ acting on phase space functions $f = f(\bx,\bv)$, i.e., without an $\bx$-modal decomposition. Denoting $\cH:=L^2(\torus^2\times\sphere^1)$, the operator $\bJ$ is unbounded with
\[ \cD_\circ \coleq \Big\{ f \in \cH \ \Big|\ \frac{\partial^2 f}{\partial \bx^2} \in \cH  \Big\} \subseteq \cD(\bJ) \]
since $\bR \bJ(\cD_\circ) \subseteq \cD (\bJ)$. 
Thus, Definition \ref{def:op:HC-index} for $m_{HC}=1$ can be extended to the example of the Lorentz kinetic equation with $\bJ$ unbounded by requiring that
\begin{equation}\label{est2}
\bR + \bJ \bR \bJ^* \ge \kappa \bI
\end{equation}
holds on $\cD_\circ \cap \{ \int_{\torus^2 \times \sphere^1} f \,\ud \bx \ud \bv = 0 \}$. The latter restriction is necessary, since the Lorentz kinetic equation is mass preserving or, equivalently, for the zeroth mode, $\bR - \bJ_0 = \bR$ is coercive only on $(\ker \bR)^\perp \subseteq L^2(\sphere^1)$.

This allows to refine Theorem~\ref{lorentz-HC} \makered{by combining the modal results from Lemma~\ref{lem1}: }

\begin{proposition}
Let $d=2$. Then:
\begin{enumerate}
    \item[(a)] The operator $\bC = \bR - \bJ$ is hypocoercive on $\cD_\circ \cap \{ \int_{\torus^2 \times \sphere^1} f \,\ud \bx \ud \bv = 0 \}$ with index $m_{HC}=1$. In particular, inequality~\eqref{est2} holds with $\kappa = \frac{3-\sqrt{5}}{2}$.
    \item[(b)] For any $f_0 \in \cH$ the solution of \eqref{lorentz} satisfies
\begin{equation}\label{f-decay}
  \left\| f(t) - f_\infty \right\|_{\cH} 
  \le \sqrt3 e^{-\lambda_0 t} \left\| f_0 - f_\infty\right\|_{\cH}, 
\end{equation} 
where $f_\infty:=\frac{1}{(2\pi)^3} \int_{\torus^2 \times \sphere^{1}} f_0(\bx,\bv) \,\ud \bv \,\ud \bx$ denotes the unique equilibrium corresponding to $f_0$ and $\lambda_0$ is given in Lemma~\ref{lem1}(b).
\item[(c)] The propagator for \eqref{lorentz}, $\bP(t):=e^{-\bC t}$, satisfies
\begin{equation}\label{P-decay}
    1-\tilde c t^3 \le \|\bP(t)-\Pi_\infty\|_{\cB(\cH)}\le 1-c t^3,\quad 0\le t\le \tau,
\end{equation}
where the constants $c,\,\tau>0$ are from Lemma \ref{lem1}(c), $\tilde c>0$ is given in the proof below and $\Pi_\infty$ denotes the $\cH$-orthogonal projection on $\Span[f_\infty]$.
\end{enumerate}
\end{proposition}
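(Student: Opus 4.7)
My plan is to Fourier-decompose in $\bx$ and reduce every statement to the mode-by-mode results of Lemma~\ref{lem1} via Parseval's identity. The key structural observation is that $\bR$, $\bJ$, $\bC$ and $\bP(t)$ are all block-diagonal with respect to the $\bx$-Fourier decomposition, with the $\bn$-th block acting on $L^2(\sphere^1)$ by $\bR$ (the same operator in every block) and $\bJ_\bn$; in particular, the restriction of $\bP(t)$ to mode~$\bn$ is exactly $\bP_\bn(t)$.

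For part~(a), I would note that $\bR+\bJ\bR\bJ^*$ is block-diagonal with blocks $\bR+\bJ_\bn\bR\bJ_\bn^*$. For $\bn=0$ the block is just $\bR$, which satisfies $\bR\ge\bI$ on $(\ker\bR)^\perp\subset L^2(\sphere^1)$; the restriction $\int f\,\ud\bx\,\ud\bv=0$ removes exactly the single constant direction in the zeroth mode that lies in $\ker\bR$. For $\bn\ne0$, Lemma~\ref{lem1}(a) gives the uniform bound $\bR+\bJ_\bn\bR\bJ_\bn^*\ge\kappa\bI$ with $\kappa=\tfrac{3-\sqrt{5}}{2}$. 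Summing via Parseval produces~\eqref{est2} with this $\kappa$. The index equals~$1$ and not~$0$ because $\bR$ alone is not coercive on $\cH$ (any $\bx$-dependent function constant in $\bv$ lies in $\ker\bR$).

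For part~(b), I would Fourier-decompose $f(t)-f_\infty$. The zeroth mode satisfies $\partial_t f_0=-\bR f_0$; its $\bv$-average is conserved (equal to $2\pi f_\infty$), and the component orthogonal to constants decays pointwise as $e^{-t}$. Since $\lambda_0<1$ this is at most $e^{-\lambda_0 t}$ times the initial value. For every $\bn\ne0$, Lemma~\ref{lem1}(b) gives $\|f_\bn(t)\|_{L^2(\sphere^1)}\le\sqrt{3}\,e^{-\lambda_0 t}\|f_\bn(0)\|_{L^2(\sphere^1)}$. Applying Parseval's identity to the squared $\cH$-norm of $f(t)-f_\infty$ and using $1\le 3$ for the zeroth-mode contribution immediately yields~\eqref{f-decay}.

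For part~(c), the block-diagonal structure together with $e^{-\bR t}-\Pi_{\ker\bR}=e^{-t}(\bI-\Pi_{\ker\bR})$ on the zeroth mode gives
\[
\|\bP(t)-\Pi_\infty\|_{\cB(\cH)}=\max\Bigl\{e^{-t},\,\sup_{\bn\in\Z^2\setminus\{0\}}\|\bP_\bn(t)\|_{\cB(L^2(\sphere^1))}\Bigr\}.
\]
For small $t$ the term $e^{-t}\le 1-t$ is dominated by $1-c t^3$, so the supremum over $\bn\ne 0$ governs both bounds. The upper bound in~\eqref{P-decay} then follows directly from Lemma~\ref{lem1}(c). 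For the lower bound I would fix one nonzero mode, say $\bn_*=(1,0)$, and apply Proposition~\ref{lower} to $\bC_{\bn_*}\in\cB(L^2(\sphere^1))$, which is accretive of hypocoercivity index~$1$; this yields $\|\bP_{\bn_*}(t)\|\ge 1-\tilde c_1 t^3-\tilde C_1 t^4$ on some $[0,t_*]$, and absorbing the $t^4$ term into the leading term on a (possibly smaller) interval gives the desired lower bound $1-\tilde c\,t^3$. The only conceptually delicate step is the identification of $\|\bP(t)-\Pi_\infty\|_{\cB(\cH)}$ as the above $\max$ and the observation that for small $t$ it is actually attained in the $\bn\ne 0$ supremum; after that, Lemma~\ref{lem1} and Proposition~\ref{lower} do all the remaining work, and the rest is routine Parseval bookkeeping.
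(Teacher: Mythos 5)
Your proof is correct and follows essentially the same modal-decomposition strategy as the paper: diagonalize in the $\bx$-Fourier basis, identify $\bR$, $\bJ$, $\bC$, and $\bP(t)$ as block-diagonal with the $\bn$-th block acting by $\bR-\bJ_\bn$, and reassemble the uniform-in-$\bn$ bounds of Lemma~\ref{lem1} via Parseval. You are in fact somewhat more explicit than the paper: the paper's proof of parts~(b),(c) says ``follows directly'' from \eqref{unif-decay} and \eqref{Pn-decay} without spelling out the treatment of the zeroth mode, whereas you write out the block-norm identity $\|\bP(t)-\Pi_\infty\|_{\cB(\cH)}=\max\{e^{-t},\,\sup_{\bn\neq 0}\|\bP_\bn(t)\|\}$ and observe that the nonzero modes dominate for small $t$; the lower bound via the $(1,0)$-mode and Proposition~\ref{lower} is also what the paper does.

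One small slip to fix in part~(c): the inequality ``$e^{-t}\le 1-t$'' is reversed, since $1-t\le e^{-t}$. The conclusion you actually need --- that the zeroth-mode contribution $e^{-t}$ is dominated by $1-ct^3$ on $[0,\tau]$ --- still holds, since $1-e^{-t}\ge t-\tfrac{t^2}{2}\ge ct^3$ once $\tau$ is taken small enough (e.g.\ $\tau\le\min(1,1/\sqrt{2c})$); this is one of the reasons $\tau$ may need to shrink relative to the value delivered by Lemma~\ref{lem1}(c). With that correction the argument is sound.
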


\begin{proof}
\makered{\underline{Part (a):}}
For the $\bx$-modal decomposition of $f \in \cH$ we use
\begin{equation}\label{modal-deco}
  f(\bx,\varphi) = \frac{1}{2\pi} \sum_{\bn \in \Z^2} f_\bn(\varphi) e^{i\bn \cdot \bx} 
\end{equation}
with $\bv = (\cos \varphi, \sin \varphi) \in \sphere^1$. Denoting the inner product on \makered{$\cH=L^2(\torus^2 \times \sphere^1)$} by $\llangle ., . \rrangle$ we compute for (real-valued) $f \in \cD_\circ \cap \{ f \ \big| \ \int_{\torus^2 \times \sphere^1} f \, \ud \bx\, \ud \bv = 0 \}$:
\begin{align}
\nonumber \llangle \bR f, f \rrangle &= \int_{\torus^2} \int_0^{2\pi} \Big[ f(\bx, \varphi) - \frac{1}{2\pi} \int_0^{2\pi} f(\bx,\psi)\,\ud \psi\Big]  \overline{f(\bx,\varphi)} \,\ud \varphi\, \ud \bx \\
\nonumber &= \frac{1}{(2\pi)^2} \sum_{\bn,\Bm \in \Z^2} \int_0^{2\pi} \int_{\torus^2} e^{i(\bn-\Bm)\cdot \bx} \,\ud \bx\, \big[ f_\bn(\varphi) - \tilde f_\bn \big] \overline{f_\Bm(\varphi)}\,\ud \varphi \\
\label{est3} &= \sum_{\bn \in \Z^2} \int_0^{2\pi} \big[ f_\bn(\varphi) - \tilde f_\bn\big]\, \overline{f_\bn}(\varphi)\,\ud \varphi = \sum_{\bn \in \Z^2} \langle \bR f_\bn, f_\bn \rangle\,.
 \end{align}
The computation from Lemma \ref{lem1} also carries over to the operator $\bJ$, since the spatial modes $e^{i \bn\cdot \bx}$ are eigenfunctions of  $\bv \cdot \nabla_\bx $. Using
\[ \bR (\bJ^* f) = \frac{i}{\sqrt{2\pi}} \sum_{\bn \in \Z^2} e^{i \bn \cdot \bx} \bR (\bv \cdot \bn f_\bn) \]
we compute
\begin{align*}
\llangle \bJ \bR(\bJ^* f), f \rrangle &= - \int_{\torus^2} \int_0^{2\pi} \bR (\bv \cdot \nabla_\bx f) \bv \cdot \nabla_\bx f \,\ud \varphi \,\ud \bx \\
&= - \frac{1}{(2\pi)^2} \sum_{\bn,\Bm \in \Z^2} \int_0^{2\pi} \int_{\torus^2} e^{i(\bn-\Bm)\cdot \bx} \,\ud \bx \,\bR(\bv \cdot \bn f_\bn) \bv \cdot \Bm \overline{f_\Bm} \,\ud \varphi \\
&= - \sum_{\bn \ne 0} \int_0^{2\pi} \bR (\bv \cdot \bn f_\bn) \bv \cdot \bn \overline{f_\bn} \,\ud \varphi = \sum_{\bn \ne 0} \langle \bJ \bR \bJ^* f_\bn, f_\bn \rangle \,.
\end{align*}
Adding this line to \eqref{est3} yields the claim \eqref{est2}:
\begin{align*}
\llangle (\bR + \bJ \bR \bJ^*) f, f \rrangle &= \langle \bR f_0, f_0 \rangle + \sum_{\bn \ne 0} \langle (\bR + \bJ \bR \bJ^*)f_\bn, f_\bn \rangle \\
& \ge \norm{f_0}^2  + \kappa \sum_{\bn \ne 0} \norm{f_\bn}^2 \ge \kappa \sum_{\bn \in \Z^2} \norm{f_\bn}^2=\kappa \|f\|^2_\cH 
\end{align*}
for all $f \in \cD_\circ \cap \{ f \ \big| \ \int_{\torus^2 \times \sphere^1} f \, \ud \bx\, \ud \bv = 0 \}$.

\medskip\noindent
\makered{\underline{Part (b):}
From the $\bx$-modal decomposition \eqref{modal-deco} of $f \in \cH$ we have
$$
  \|f\|_\cH^2 = \sum_{\bn\in\Z^2} \|f_\bn\|^2_{L^2(\sphere^1)}\,,
$$
and \eqref{f-decay} follows directly from \eqref{unif-decay}.

\medskip\noindent
\underline{Part (c):}
As in Part (b), the upper bound of \eqref{P-decay} follows directly from \eqref{Pn-decay}. The lower bound follows from $\|f\|_\cH\ge \|f_{(1,0)}\|_{L^2(\sphere^1)}$ along with the cubic lower bound \eqref{starstar}, e.g.\ for the $(1,0)$-mode.
}
\end{proof}

We remark that the above analysis can be extended to the case $d=3$ and higher using the spherical harmonics basis on $\sphere^{d-1}$.


\section{Further examples}\label{sec_furtherexamples}

In this section we present some examples to further illustrate the concept of hypocoercivity and to delineate the differences to the finite-dimensional case.

For accretive matrices~$\bC$, the spectral norm of the associated propagator (matrix exponential) $e^{-\bC t}$ is real analytic for~$t\in[0,t_0)$ for some $t_0>0$.
The following example shows that the time interval~$t\in[0,t_0)$ can be arbitrarily short:
\begin{example}[Propagators whose norm is real analytic on arbitrarily short intervals]
\label{example:family_Ck}
For $k\in\N$, consider matrices~$\bC_k \in\R^{2\times 2}$ of the form
\begin{equation}\label{C_k}
 \bC_k
=\begin{bmatrix}
 0 & k \\
-k & 1
 \end{bmatrix} \in \R^{2\times 2}
\ , \quad
(\bC_k)_H =\diag(0,1)
\ , \quad
(\bC_k)_S =k \begin{bmatrix} 0 & 1 \\ -1 & 0 \end{bmatrix} .
\end{equation}
The matrices $\bC_k$, $k\in\N$ are accretive with eigenvalues $\lambda_\pm=\tfrac12 (1 \pm i \sqrt{4k^2-1})$.
Hence, all solutions of system \eqref{lineveq} with $\bC_k$ decay with rate $1/2$. 
Due to~\cite[Theorem 3.7]{AAS19}, for $k\in\N$, we derive 
\begin{equation} \label{long-t}
 \big\|e^{-\bC_k t}\big\|_2 
\leq \sqrt{\frac{2k+1}{2k-1}}\ e^{-t/2}
\qquad \text{for } t\geq 0\,,
\end{equation}
where $\|\cdot\|_2$ denotes the spectral norm on $\R^{2\times2}$.
Thus, the matrices~$\bC_k$, $k\in\N$ are hypocoercive and have HC-index $\mHC=1$.
In case of $2\times 2$ matrices, explicit expressions for the propagator norm are available, see~\cite[Proposition 4.2]{AAS19}:
For $t\geq 0$, the propagator norm $\|e^{-\bC_k t}\|^2_2$ satisfies
\begin{equation} \label{est:sandwich}
 e^{-t}\ m_-(t)
\leq \big\|e^{-\bC_k t}\big\|_2^2 
= e^{-t}\ m_+(t) 
\end{equation}
with
\[
 m_{\pm}(t) 
:= \pm \Bigg( \sqrt{\frac{(1-\alpha^2 \cos(\delta t))^2}{(1-\alpha^2)^2}-1} \pm \frac{1-\alpha^2 \cos(\delta t)}{1-\alpha^2} \Bigg)\,,
\]
where $\delta:=\sqrt{4k^2-1}$ and $\alpha:=1/(2k)$, see~\cite[Proposition 4.2]{AAS19}.
For fixed $t\geq 0$, the functions $h_{\pm}(t) :=e^{-t} m_{\pm}(t)$ are the eigenvalues of $e^{-\bC_k^* t}e^{-\bC_k t}$, which intersect (by touching each other) for the first (positive) time at $t_k=2\pi/\sqrt{4k^2-1}$. There, the functions $h_\pm$ have a kink.
For $k\to\infty$, the intersection times $t_k$ converge (monotonically) to $0$.

\cite[Theorem 2.7]{AAC2} yields the following Taylor expansion of the propagator norm, for $k\in\N$:
\begin{equation} \label{short-t}
 \big\|e^{-\bC_k t}\big\|_2
=1 -\frac{k^2}{12}\ t^3 +\mathcal{O}(t^4)
 \qquad\text{for } t\in[0,t_k).
\end{equation}
For $k\to\infty$, the matrices $\bC_k$ become unbounded and the multiplicative factor $k^2/12$ in~\eqref{short-t} diverges, and the Taylor expansion~\eqref{short-t} holds only on shrinking intervals, since $\lim_{k\to\infty} t_k =0$.
\end{example}

\bigskip
\begin{example}[A bounded operator which is not hypocoercive]
\label{example:AAC}
Following~\cite[Example 2.6]{AAC2}, we consider an evolution equation~\eqref{lineveq} with block-diagonal operator~$\bC=\diag(\bC_1,\bC_2,\ldots)$, where each block matrix~$\bC_k$, $k\in\N$ is of the form
\begin{equation}\label{E_k}
 \bC_k
=\begin{bmatrix}
 0  & 1 & & & \\
-1 & \ddots & \ddots & & \\
   & \ddots & \ddots & 1 & \\
   & & -1 & 0 & 1 \\
   & & & -1 & 1
 \end{bmatrix} \in \R^{k\times k}\,,
\end{equation}
with $\bC_k=\bR_k -\bJ_k$ such that $\bR_k =\diag(0,0,\ldots,0,1)$ and 
$\bJ_k =-\tridiag(-1,0,1)$.
The accretive matrices $\bC_k$, $k\in\N$ are hypocoercive with HC-index~$\mHC(\bC_k) =k-1$, see~\cite[Example 2.6]{AAC2}.
Thus, the bounded accretive operator~$\bC=\diag(\bC_1,\bC_2,\ldots)$ cannot have a finite hypocoercivity index (as in Definition \ref{def:op:HC-index}). 
Hence, due to Theorem \ref{cor:exp-stable}, $\bC$ is not hypocoercive.

Another way to realize that $\bC$ is not hypocoercive is to show that the (sequence of) spectral gaps of $\bC_k$, $\mu_k$ converges to $0$ as $k\to\infty$:
The hypocoercive matrices~$\bC_k$, $k\in\N$ satisfy the estimate 
\begin{equation} \label{est:Ek}
 \| e^{-\bC_k t} \|_2 
\leq c_k e^{-\mu_k t} , 
\qquad
 t\geq 0 ,
\end{equation}
for constants $c_k\geq 1$ and $\mu_k>0$, $k\in\N$. 
The matrices $\bC_k$, $k\in\N$ have trace  $\trace \bC_k=1$, and all eigenvalues $\lambda^{\bC_k}\in\sigma(\bC_k)$ have non-negative real part, $\Re\lambda^{\bC_k}\geq 0$. 
Using the arithmetic-geometric mean inequality yields
\begin{equation}
 0
\leq \mu_k= \min_j \big(\Re \lambda_j^{\bC_k}\big)
\leq \Big( \prod_{j=1}^k \Re\big(\lambda_j^{\bC_k}\big) \Big)^{1/k}
\leq \frac1k \sum_{j=1}^k \Re\big(\lambda_j^{\bC_k}\big)
= \frac{\trace \bC_k}{k}
= \frac1k .
\end{equation}
Therefore, the (sequence of) spectral gaps $\mu_k$ converges to $0$ as $k\to\infty$. 
Since these $\mu_k$ are the largest possible decay rates in \eqref{est:Ek}, the accretive operator~$\bC=\diag(\bC_1,\bC_2,\ldots)$ is not hypocoercive.
\end{example}
\bigskip

\begin{example}[An unbounded operator with "infinite" hypocoercivity index]
\label{example:AAC_rescaled}
In~\cite[Example 2.6]{AAC2}, the non-hypocoercive operator $\bC=\diag(\bC_k; k\in\N)$ with $\bC_k$ given in~\eqref{E_k} is modified to obtain a hypocoercive operator (with infinite hypocoercivity index):
Consider, instead of $\bC=\diag(\bC_k; k\in\N)$, the block-diagonal operator $\tbC=\diag(\tbC_k; k\in\N)$ with rescaled matrices $\tbC_k \coleq r_k \bC_k$ for $r_k>0$ to be chosen below. 
$\tbC$ is considered on $\cH:=\bigoplus_{k=1}^\infty \R^k$ with $\|\bx\|^2:=\sum_{k=1}^\infty \|x_k\|^2$ for $\bx=(x_1,x_2,...)\in\cH$ and dense domain $\cD(\tbC):=\{\bx\in\cH\ :\ (r_1x_1,r_2x_2,r_3x_3,...)\in\cH\}$.
Let $\|\cdot\|_2$ denote the spectral norm on $\R^{k\times k}$. 
Then, the propagator $\mP_k(t):=e^{-\tbC_k t}$ satisfies
\[
 \|\bP_k(1)\|_2
=\|e^{-\tbC_k}\|_2 
=\|e^{-r_k\bC_k}\|_2 
\leq c_k e^{-r_k\mu_k} 
\leq \frac1e \quad \text{for }\quad r_k 
= \frac{1+ \log c_k}{\mu_k}\ 
\]
due to \eqref{est:Ek}.
Therefore, making this choice of $r_k$ and $t_0=1$, $\|e^{-\tbC t_0}\| \leq 1/e$, so that Proposition~\ref{prop:UniExpStableSG}\ref{prop:UniExpStableSG:c} is satisfied for $t_0 =1$. 
Thus, the accretive operator~$\tbC$ is hypocoercive. 

However, since $c_k\geq 1$ and $\mu_k\to0$,  
we deduce 
\[
 \lim_{k\to\infty} r_k 
=\lim_{k\to\infty} \frac{1+ \log c_k}{\mu_k}
\geq \lim_{k\to\infty} \frac1{\mu_k}
=\infty.
\]
Hence, the operator $\tbC$ 
is unbounded.
\end{example}
\bigskip

{\bfseries Declarations of interest. } 
none


{\bfseries Acknowledgments. } 
The first two authors (FA, AA) were supported by the Austrian Science Fund (FWF) via
the FWF-funded SFB \# F65. The third author (VM)  was supported by Deutsche Forschungsgemeinschaft (DFG, German Research Foundation) CRC 910 \emph{Control of self-organizing nonlinear systems: Theoretical methods and concepts of application}: Project No.~163436311.

\printbibliography
\appendix

%
%
\section{Appendix}\label{sec:appendix}

In this appendix we collect some conditions that are equivalent to hypocoercivity in the finite dimensional case; 
here they are presented in the Hilbert space case. 
But, presently, they are not used for the rest of the paper.

\begin{lemma}\label{lemmai}
Let $\bC = \bR - \bJ \in \cB(\cH)$ with $\bR^* = \bR \ge 0$ and $\bJ^* = - \bJ$. Then, for any $m \in \N$ and $x \in \cH$ the following four assertions are equivalent:
\begin{enumerate}[label=(\roman*)]
  \item\label{lemmai_i} $\langle \bC^j \bR (\bC^*)^j x, x \rangle = 0$ for all $0 \le j < m$.
  \item\label{lemmai_ii} $\langle \bJ^j \bR (\bJ^*)^j x, x \rangle = 0$ for all $0 \le j < m$.
  \item\label{lemmai_iii} $\langle (\bC^*)^j \bR \bC^j x, x \rangle = 0$ for all $0 \le j < m$.
  \item\label{lemmai_iv} $\langle \bC_j^* \bC_j x, x \rangle = 0$ for all $0 \le j < m$ with $\bC_0 \coleq \sqrt{\bR}$, and the commutators $\bC_{j+1} \coleq [\bJ, \bC_j]$, $j \in \N_0$.
\end{enumerate}
If these hold, then we also have
\begin{enumerate}[label=(\roman*),resume]
\item\label{star}
$\langle \bC^m \bR (\bC^*)^m x, x \rangle = \langle \bJ^m \bR (\bJ^*)^m x, x \rangle = \langle (\bC^*)^m \bR \bC^m x, x \rangle = \langle \bC_m^* \bC_m x, x \rangle$.
\end{enumerate}
\end{lemma}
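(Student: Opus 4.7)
My first step is to rewrite each condition as the vanishing of a concrete vector. Using $(\bC^j)^* = (\bC^*)^j$, $(\bJ^j)^* = (\bJ^*)^j$ and self-adjointness of $\bR$,
\[
\langle \bC^j \bR (\bC^*)^j x, x\rangle = \norm{\sqrt{\bR}(\bC^*)^j x}^2, \qquad \langle (\bC^*)^j \bR \bC^j x, x\rangle = \norm{\sqrt{\bR}\bC^j x}^2,
\]
\[
\langle \bJ^j \bR (\bJ^*)^j x, x\rangle = \norm{\sqrt{\bR}\bJ^j x}^2, \qquad \langle \bC_j^*\bC_j x, x\rangle = \norm{\bC_j x}^2,
\]
and since $\ker\sqrt{\bR} = \ker\bR$, the vanishing of any of these norms at index $j$ is equivalent to a plain vector identity such as $\bR(\bC^*)^j x = 0$. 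It therefore suffices to establish the following vector-level strengthening of~\ref{star}: whenever $\bR\bJ^k x = 0$ for $k = 0, \dots, m-1$, one has
\begin{equation*}
(\bC^*)^m x = \bJ^m x, \qquad \bC^m x = (-\bJ)^m x = (\bJ^*)^m x, \qquad \bC_m x = (-1)^m \sqrt{\bR}\,\bJ^m x.
\end{equation*}
Passing to norms then yields~\ref{star}.

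The first two identities follow from expanding $\bC^* = \bR + \bJ$ and $\bC = \bR - \bJ$ noncommutatively: $(\bC^*)^m x$ is a sum of words $\bT_1\cdots \bT_m x$ with $\bT_i \in \{\bR, \bJ\}$. If such a word contains at least one factor of $\bR$, let $i_0$ be the rightmost such position; then $\bT_{i_0+1}\cdots \bT_m x = \bJ^{m-i_0} x$ and $\bR \bJ^{m-i_0} x = 0$ by hypothesis (since $m - i_0 \le m - 1$). Hence the whole word vanishes on $x$, and only the all-$\bJ$ word survives, giving $(\bC^*)^m x = \bJ^m x$; the identity for $\bC^m x$ is identical up to a sign. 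For $\bC_m$, a short induction on $m$ from $\bC_{m+1} = [\bJ, \bC_m]$ using Pascal's rule produces the closed form
\[
\bC_m \;=\; \sum_{k=0}^m (-1)^k \tbinom{m}{k}\, \bJ^{m-k}\sqrt{\bR}\,\bJ^k\,.
\]
Applied to $x$, every term with $k \le m-1$ vanishes because $\sqrt{\bR}\,\bJ^k x = 0$ (again since $\ker\sqrt{\bR} = \ker\bR$), so only the $k=m$ term remains, proving the third identity.

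With this in hand, the equivalence of \ref{lemmai_i}--\ref{lemmai_iv} follows by induction on $m$. The base case $m=0$ is vacuous. For the inductive step, suppose \ref{lemmai_ii} holds at level $m+1$, i.e., $\bR\bJ^j x = 0$ for $j = 0, \dots, m$. By the induction hypothesis, \ref{lemmai_i}, \ref{lemmai_iii}, \ref{lemmai_iv} already hold at level $m$; the vector identity just proven at level $m$ then converts vanishing of $\sqrt{\bR}\bJ^m x$ into simultaneous vanishing of $\sqrt{\bR}(\bC^*)^m x$, $\sqrt{\bR}\bC^m x$ and $\bC_m x$, establishing \ref{lemmai_i}, \ref{lemmai_iii}, \ref{lemmai_iv} at level $m+1$. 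The other three starting points are handled symmetrically. The only delicate point is keeping the indexing straight between the ``$j < m$'' ranges appearing in \ref{lemmai_i}--\ref{lemmai_iv} and the level-$m$ identity \ref{star}; beyond that, the argument is purely algebraic and relies only on boundedness of $\bC$, the factorisation $\ker\sqrt{\bR} = \ker\bR$, and $\bJ^* = -\bJ$, so no genuine infinite-dimensional subtleties enter.
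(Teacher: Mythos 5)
Your proof is correct, and the key mechanism — expand $\bC^*=\bR+\bJ$, $\bC=\bR-\bJ$ (or the iterated commutator $\bC_m$) as noncommutative words in $\bR,\bJ$ and use $\bR\bJ^k x=0$ for $k<m$ to annihilate every word containing a factor of $\bR$ — is exactly the combinatorial idea in the paper's proof.

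Where you differ is in the level at which you cash in that mechanism. The paper expands the inner products $\langle\bC^m\bR(\bC^*)^m x,x\rangle$, $\langle(\bC^*)^m\bR\bC^m x,x\rangle$ and $\langle\bC_m x,\bC_m x\rangle$ directly (so it has double sums of words on both sides of $\bR$, and must check which of these mixed terms vanish), whereas you first establish the stronger \emph{vector} identities $(\bC^*)^m x=\bJ^m x$, $\bC^m x=(\bJ^*)^m x$ and $\bC_m x=(-1)^m\sqrt{\bR}\bJ^m x$, from which all four scalar quantities become $\|\sqrt{\bR}\bJ^m x\|^2$ by just taking norms. This is a mild but genuine simplification: you only need to track one word, not a product of two, and your closed form $\bC_m=\sum_{k=0}^m(-1)^k\binom{m}{k}\bJ^{m-k}\sqrt{\bR}\bJ^k$ makes the $\bC_m$ case immediate rather than requiring the paper's separate expansion of $\bC_m^*\bC_m$. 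The downside is essentially none; you do prove slightly more than is asked (the vectors agree, not only their $\bR$-weighted norms), but nothing is wasted. One small cosmetic point: the lemma is stated for $m\in\N$, so the natural base case is $m=1$ rather than $m=0$; your "$m=0$ vacuous" convention is fine but means your first real step is $0\to1$, which you should make sure lines up with the conventions $\bJ^0=\mI$, $(\bC^*)^0=\mI$, $\bC_0=\sqrt{\bR}$ — as written it does.
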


\begin{proof}
The equivalence of \ref{lemmai_i}--\ref{lemmai_iv} is trivial for $m=1$. We proceed by induction: 
Assuming that it holds for some $m$ we shall show \ref{star}.

By assumption, from \ref{lemmai_ii} we have $x \in \ker \sqrt{\bR} (\bJ^*)^j = \ker \bR (\bJ^*)^j$ for all $j<m$. We consider
\begin{align*}
\langle \bC^m \bR (\bC^*)^m x, x \rangle &= \langle (\bR - \bJ)^m \bR (\bR - \bJ^*)^m x, x \rangle \\
&= \langle \bJ^m \bR (\bJ^*)^m x, x \rangle + \dotsc,
\end{align*}
where each remaining term is of the form
\begin{equation}\label{termu}
\pm \langle \bT_1 \bT_2 \dotsm \bT_m \bR \bS_1^* \bS_2^* \dotsm \bS_m^* x, x \rangle
\end{equation}
with $\bT_i, \bS_i \in \{ \bJ, \bR \}$ for $i = 1, \dotsc, m$, where at least one $\bT_i$ or $\bS_i$ is equal to $\bR$. If $\bS_i = \bR$ for at least one $i$ we take the largest $i$ with this property, which means that $\bS_{i+1}, \dotsc, \bS_m = \bJ$, hence $\bS_i^* \bS_{i+1}^* \dotsm \bS_m^* x = \bR (\bJ^*)^{m-i}x = 0$ and \eqref{termu} vanishes. If $\bT_i = \bR$ for at least one $i$ we take the smallest $i$ with this property, which means that $\bT_1, \dotsc, \bT_{i-1} = \bJ$, hence \eqref{termu} equals
\[ \pm \langle \bT_{i+1} \dotsm \bT_m \bR \bS_1 \bS_2 \dotsm \bS_m x, \bR (\bJ^*)^{i-1} x \rangle = 0. \]
As all terms of the form \eqref{termu} vanish, we obtain
\[ \langle \bC^m \bR (\bC^*)^m x, x \rangle = \langle \bJ^m \bR (\bJ^*)^m x, x \rangle. \]

The equality $\langle \bJ^m \bR (\bJ^*)^m x, x \rangle = \langle (\bC^*)^m \bR \bC^m x, x \rangle$ is seen in exactly the same way.

For the last equality in \ref{star} we note that, by definition, $\langle \bC_m x, \bC_m x \rangle$ is given as a sum of terms of the form
\[  \pm \langle \bT_1 \bT_2 \dotsm \bT_{m+1} \bS_1 \bS_2 \dotsm \bS_{m+1} x, x \rangle \]
with $\bT_i, \bS_i \in \{ \bJ, \sqrt{\bR} \}$ for $i = 1, \dotsc, m+1$ and exactly one of the $\bT_i$ and one of the $\bS_i$ are equal to $\sqrt{\bR}$. By assumption, all of these terms vanish except
\[ \langle \sqrt{\bR} \bJ^m x, \sqrt{\bR} \bJ^m x \rangle = \langle (\bJ^*)^m \bR \bJ^m x, x \rangle = \langle \bJ^m \bR (\bJ^*)^m x, x \rangle. 
\]
Hence \ref{star} holds, which also gives equivalence of the assertions \ref{lemmai_i}--\ref{lemmai_iv} for $m+1$. 
\end{proof}

From this, the following reformulations and modifications of \ref{p2} are evident.

\begin{corollary} \label{cor:Operators:HC_equivalences}
Let $\bC = \bR - \bJ \in \cB(\cH)$ with $\bR^* = \bR \ge 0$ and $\bJ^* = - \bJ$ and $m \in \N_0$. Then, the following assertions are equivalent:
\begin{enumerate}[label=(\roman*)]
\item $\displaystyle \sum_{j=0}^m \bC^j \bR (\bC^*)^j > 0$,
\item $\displaystyle \sum_{j=0}^m \bJ^j \bR (\bJ^*)^j > 0$,
\item $\displaystyle \sum_{j=0}^m (\bC^*)^j \bR \bC^j > 0$,
\item $\displaystyle \sum_{j=0}^m \bC_j^* \bC_j > 0$.
\end{enumerate}
\end{corollary}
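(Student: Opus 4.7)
The plan is to deduce this corollary directly from Lemma~\ref{lemmai}, which already carries the entire algebraic burden; only a short non-negativity argument is needed to bridge from the pointwise vanishing characterization in Lemma~\ref{lemmai} to the operator-positivity statements in the corollary.

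First, I would observe that each summand in each of the four sums is a non-negative self-adjoint operator: writing $\bR = \sqrt{\bR}\,\sqrt{\bR}$ gives the factorizations
\[ \bC^j \bR (\bC^*)^j = (\bC^j\sqrt{\bR})(\bC^j\sqrt{\bR})^*, \qquad \bJ^j \bR (\bJ^*)^j = (\bJ^j\sqrt{\bR})(\bJ^j\sqrt{\bR})^*, \]
and analogously for $(\bC^*)^j \bR \bC^j$, while $\bC_j^* \bC_j \ge 0$ by construction. Consequently, each of the four sums $\bS^{(m)}$ is a non-negative self-adjoint operator, and for such an operator the quadratic form $\langle \bS^{(m)} x, x\rangle$ vanishes on $x$ precisely when every summand's quadratic form vanishes on $x$.

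Next, I would invoke Lemma~\ref{lemmai} with $m$ there replaced by $m+1$, so that the range $0 \le j < m+1$ in its hypotheses covers exactly the summand indices $j = 0, \dotsc, m$ appearing in the corollary. The lemma then asserts that for any fixed $x \in \cH$, the condition ``$\langle \bC^j \bR (\bC^*)^j x, x \rangle = 0$ for all $0 \le j \le m$'' is equivalent to the three analogous conditions with $\bJ^j \bR (\bJ^*)^j$, with $(\bC^*)^j \bR \bC^j$, and with $\bC_j^* \bC_j$ in place of $\bC^j \bR (\bC^*)^j$. Combining this with the previous paragraph, the four sets
\[ N_i \coleq \{x \in \cH : \langle \bS^{(m)}_i x, x \rangle = 0 \}, \qquad i = 1,2,3,4, \]
all coincide, where $\bS^{(m)}_i$ denotes the four sums from the corollary. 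Since $\bS^{(m)}_i > 0$ is by definition the statement $N_i = \{0\}$, the four assertions are equivalent.

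The only edge case is $m = 0$, which is trivial because each of the four sums then reduces to $\bR$ (using $\bC_0^*\bC_0 = \sqrt{\bR}\,\sqrt{\bR} = \bR$). I do not expect any substantive obstacle: Lemma~\ref{lemmai} does all the heavy lifting relating $\bC$, $\bJ$, and the iterated commutators $\bC_j$, and the passage from pointwise vanishing to operator positivity is a one-line consequence of the non-negativity of each summand.
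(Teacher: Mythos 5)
Your proof is correct and matches the paper's intent exactly: the paper states only that the corollary ``is evident'' from Lemma~\ref{lemmai}, and your argument supplies precisely the details implicit in that remark---each summand is nonnegative self-adjoint (via the factorizations you give), so $\langle \bS^{(m)}_i x, x\rangle = 0$ forces every summand's quadratic form to vanish at $x$; Lemma~\ref{lemmai} applied with parameter $m+1$ then identifies the four null sets $N_i$, and positivity in the paper's sense is exactly $N_i = \{0\}$. Nothing is missing, and the $m=0$ edge case is handled.
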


Next, we consider similar variants of the equivalence \ref{p1} $\Longleftrightarrow$ \ref{p2}.

\begin{lemma}\label{lemmaii}Let $\bC = \bR - \bJ \in \cB(\cH)$ with $\bR^* = \bR \ge 0$ and $\bJ^* = -\bJ$ and $m \in \N_0$. Then
\begin{align*}
\sum_{j=0}^m \bJ^j \bR (\bJ^*)^j > 0 & \Longleftrightarrow \bigcap_{j=0}^m \ker \sqrt{\bR} (\bJ^*)^j = \{0 \} \Longleftrightarrow \overline{\Span} \bigcup_{j=0}^m \im ( \bJ^j \sqrt{\bR}) = \cH, \\
\sum_{j=0}^m \bC^j \bR (\bC^*)^j > 0 & \Longleftrightarrow \bigcap_{j=0}^m \ker \sqrt{\bR} (\bC^*)^j = \{0 \} \Longleftrightarrow \overline{\Span} \bigcup_{j=0}^m \im ( \bC^j \sqrt{\bR} )= \cH, \\
\sum_{j=0}^m (\bC^*)^j \bR \bC^j > 0 & \Longleftrightarrow \bigcap_{j=0}^m \ker \sqrt{\bR} \bC^j = \{0 \} \Longleftrightarrow \overline{\Span} \bigcup_{j=0}^m \im ( (\bC^*)^j \sqrt{\bR} )= \cH, \\
\sum_{j=0}^m \bC_j^* \bC_j > 0 & \Longleftrightarrow \bigcap_{j=0}^m \ker \bC_j = \{0 \} \Longleftrightarrow \overline{\Span} \bigcup_{j=0}^m \im \bC_j = \cH. 
\end{align*}
Moreover, all four rows are equivalent to each other.
\end{lemma}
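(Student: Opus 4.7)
The plan is to prove each of the four rows by a horizontal chain of two equivalences, then establish the cross-row equivalence by showing that the four families of subspaces in the middle column generate the same closed linear span. Throughout, I will exploit the general Hilbert-space duality $\ker T^* = (\im T)^\perp$ for $T \in \cB(\cH)$, together with the basic fact that a (closed) subspace $V \subseteq \cH$ equals $\cH$ if and only if $V^\perp = \{0\}$.

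For the horizontal equivalences, fix any one row, say the one involving $\bJ$. The direction (i) $\Leftrightarrow$ (ii) follows from the observation that $\sum_{j=0}^m \bJ^j \bR (\bJ^*)^j$ is a sum of nonnegative self-adjoint operators, since each summand equals $((\bJ^*)^j)^* \bR (\bJ^*)^j \ge 0$. Hence the sum annihilates $x$ if and only if each summand does, i.e., if and only if $\sqrt{\bR}(\bJ^*)^j x = 0$ for all $j \in \{0,\dots,m\}$. Thus strict positivity of the sum is equivalent to the triviality of $\bigcap_{j=0}^m \ker(\sqrt{\bR}(\bJ^*)^j)$. For (ii) $\Leftrightarrow$ (iii), I use $(\sqrt{\bR}(\bJ^*)^j)^* = \bJ^j \sqrt{\bR}$ (since $\sqrt{\bR}$ is self-adjoint and $\bJ^* = -\bJ$), so that
\[
 \bigcap_{j=0}^m \ker(\sqrt{\bR}(\bJ^*)^j) = \bigcap_{j=0}^m (\im(\bJ^j \sqrt{\bR}))^\perp = \Big(\Span \bigcup_{j=0}^m \im(\bJ^j \sqrt{\bR})\Big)^\perp,
\]
whose triviality is equivalent to the closed span being all of $\cH$. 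The other three rows are handled identically, noting that $(\sqrt{\bR}\,\bC^j)^* = (\bC^*)^j \sqrt{\bR}$, $(\sqrt{\bR}(\bC^*)^j)^* = \bC^j \sqrt{\bR}$, and $\bC_j^* = \bC_j^*$ gives $\ker \bC_j = (\im \bC_j^*)^\perp$; since $\bC_j$ satisfies a commutator-type recursion that is preserved (up to sign) under taking adjoints by an induction on $j$, one concludes in the same way.

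For the cross-row equivalence, it suffices to prove that the four closed linear spans
\[
 \overline{\Span}\!\bigcup_{j=0}^m \im(\bJ^j \sqrt{\bR}), \quad \overline{\Span}\!\bigcup_{j=0}^m \im(\bC^j \sqrt{\bR}), \quad \overline{\Span}\!\bigcup_{j=0}^m \im((\bC^*)^j \sqrt{\bR}), \quad \overline{\Span}\!\bigcup_{j=0}^m \im \bC_j
\]
coincide. This is precisely the content of the algebraic inclusions already derived in the proof of Lemma \ref{lemmaiii}: expanding $\bJ^k = (\bR - \bC)^k$, $\bC^k = (\bR - \bJ)^k$, and $(\bC^*)^k = (\bR + \bJ)^k$ in terms of pure monomials in $\bR$, $\bJ$ shows mutual inclusion of each finite span into the others (up to multiplication by $\sqrt{\bR}$ on the right, which turns any factor $\bR$ on the right end into $\sqrt{\bR} \cdot \sqrt{\bR}$ and can be absorbed by choosing a new vector $y = \sqrt{\bR} x$). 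Similarly, the commutator identity $\bC_{j+1} = \bJ \bC_j - \bC_j \bJ$ gives an inductive proof that each $\im(\bJ^k \sqrt{\bR})$ is contained in $\Span \bigcup_{j=0}^k \im \bC_j$ and conversely. Taking closures of all these inclusions produces the claimed equality of closed spans.

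I do not expect any serious obstacle: all the algebraic identities are available from the proof of Lemma \ref{lemmaiii}, and the step that there required the closed-range theorem (Brezis 2.19/2.21) to equate coercivity with surjectivity is \emph{not} needed here, because we only claim density (strict positivity, not uniform positivity, and closed span, not plain span). The proof is therefore strictly simpler than that of Lemma \ref{lemmaiii} and reduces to routine bookkeeping of adjoints and orthogonal complements.
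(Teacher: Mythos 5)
Your proof is correct, and the horizontal (within-row) equivalences are obtained exactly as the paper intends via Proposition~\ref{propi}: a sum of nonnegative operators annihilates $x$ iff each summand does, combined with $\ker T^* = (\im T)^\perp$ and, for the last row, the self-adjointness $\bC_j^* = \bC_j$ (which you rightly note follows by the induction $\bC_{j+1}^* = (\bJ\bC_j - \bC_j\bJ)^* = \bJ\bC_j - \bC_j\bJ$). Where you genuinely diverge from the paper is in establishing the cross-row equivalence. The paper's route is via the \emph{first} column: Lemma~\ref{lemmai} and Corollary~\ref{cor:Operators:HC_equivalences} show, by an induction on $j$ in which all cross-terms in the binomial expansion of $(\bR\pm\bJ)^j$ are seen to vanish on the relevant $x$, that for any fixed vector the four vanishing conditions $\langle \bJ^j\bR(\bJ^*)^j x,x\rangle=0$, $\langle \bC^j\bR(\bC^*)^j x,x\rangle=0$, etc.\ for all $j<m$ are equivalent, whence the four strict-positivity conditions are equivalent. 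You instead compare the \emph{third} column, arguing that the four plain spans agree algebraically by expanding $\bJ^k\sqrt{\bR}=(\bR-\bC)^k\sqrt{\bR}$ (and similarly for the other generators) into monomials and absorbing trailing $\bR$-factors as $\sqrt{\bR}\cdot\sqrt{\bR}$, then taking closures. Both routes work, and yours has the merit of bypassing the inductive bookkeeping of Lemma~\ref{lemmai}. One expository caveat: you reference the span inclusions ``from the proof of Lemma~\ref{lemmaiii},'' but the paper's proof of that lemma itself invokes Lemma~\ref{lemmaii} (for the kernel-intersection conditions); the span-inclusion computations are indeed self-contained and purely algebraic, so there is no real circularity, but in a write-up you should extract and prove them as a standalone preliminary rather than citing a later result whose proof quotes the one you are proving.
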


\begin{proof} This is immediate as in Proposition \ref{propi}, taking into account that $\bC_j^* = \bC_j$ for the last equivalence.
\end{proof}

\section{Connection to linear systems and control theory}
\label{ssec:control}

In this appendix we discuss connections of our results to classical concepts in control theory.

Consider linear systems~$\Sigma(\fA,\fB,\fC,\fD)$ of the form
\begin{subequations} \label{LinearSystem}
\begin{align}
 \ddt z(t)
&=\fA z(t) +\fB u(t) , \qquad t\geq 0 , \label{LS:stateEq}
\\
 y(t)
&=\fC z(t) +\fD u(t) , \label{LS:outputEq}
\end{align}
\end{subequations}
with $\fA,\fB,\fC,\fD\in\cB(\cH)$, see~\cite[\S6]{CurtainZwart}.

A linear control system $\Sigma(\fA,\fB,-,-)$ of the form~\eqref{LS:stateEq} is said to be~\emph{exponentially stabilizable} if there exists a linear operator $\fK\in\cB(\cH)$ such that $\bAK:=\fA +\fB\fK$ generates a (uniformly) exponentially stable semigroup (i.e., $\bAK$ is hypocoercive), see~\cite[\S16.6]{Za20}, \cite[Definition 8.1.1]{CurtainZwart}.

If the accretive operator~$\bC\in\cB(\cH)$ with $\bC=\bR-\bJ$ is hypocoercive, then the linear systems~$\Sigma(\bJ,\bR,-,-)$ and~$\Sigma(\bJ,\sqrt{\bR},-,-)$ are exponentially stabilizable (using $\fK=-\bI$ and $\fK=-\sqrt{\bR}$, respectively).


Comparing controllability of linear systems in finite- and infinite-dimensional Hilbert space settings, one has to distinguish several different notions in the latter case such as exact and approximate controllability, see~\cite[Definition 6.2.1]{CurtainZwart}.

If a linear control system $\Sigma(\fA,\fB,-,-)$ of the form~\eqref{LS:stateEq} is exactly controllable then it is exponentially stabilizable.
However there exist approximately controllable linear systems which are not exponentially stabilizable, see~\cite[Example 8.1.2]{CurtainZwart}, \cite[Theorem 16.4]{Za20}.

If an accretive operator~$\bC\in\cB(\cH)$ with $\bC=\bR-\bJ$ is hypocoercive, then the linear systems~$\Sigma(\bJ,\bR,-,-)$ and~$\Sigma(\bJ,\sqrt{\bR},-,-)$ are exponentially stabilizable (using $\fK=-\bI$ and $\fK=-\sqrt{\bR}$, respectively).
Hence, in this case, the linear systems~$\Sigma(\bJ,\bR,-,-)$ and~$\Sigma(\bJ,\sqrt{\bR},-,-)$ have to be exactly controllable.

For other interesting results in the study of linear systems of the form~$\Sigma(\bJ,\bR,-,-)$, see~\cite[Theorem 16.5]{Za20} (noting that $\fA=\bJ$ generates a group), and ~\cite[\S~III.1.6]{BeDaPDeMi07}.

Conditions similar to the ones given in Proposition~\ref{propii}~\ref{p1'}--\ref{p3'} characterize \emph{asymptotically controllable/observable} linear systems~$\Sigma(\fA,\fB,\fC,\fD)$ of the form~\eqref{LinearSystem}, see~\cite[\S6]{CurtainZwart}:
\begin{itemize}
    \item 
Considering~\ref{p1'}, condition~\eqref{p1a'} appears in \cite[Theorem 6.2.25.d]{CurtainZwart} with $\fA=\bJ$ and $\fB=\sqrt{\bR}$.
Condition~\eqref{p1b'} appears in \cite[Corollary 6.2.26.d]{CurtainZwart} with $\fA=\bJ^*$ and $\fC=\sqrt{\bR}$.
Moreover, the equivalence between~\eqref{p1a'} and~\eqref{p1b'} is used in~\cite[Lemma 6.2.14.a, Corollary 6.2.21.a]{CurtainZwart}.
    \item
Considering~\ref{p2'}, the condition $\sum_{j=0}^\infty \bJ^j \bR (\bJ^*)^j > 0$ is related to~\cite[Corollary 6.2.15.b.i]{CurtainZwart} with $\fA=\bJ^*$ and $\fC=\sqrt\bR$ (arguing as in the proof of~\cite[Theorem 6.2.27]{CurtainZwart}).
    \item
Finally, condition~\ref{p3'} is related to the characterization of $\Sigma(\bJ^*,-,\sqrt\bR,-)$ being approximately observable in infinite time given in~\cite[Definition 6.2.18]{CurtainZwart} and~\cite[Lemma 6.2.22]{CurtainZwart}.
\end{itemize}
The equivalences between the statements follow from the given references, and the duality of controllability and observability notions, see e.g.~\cite[Lemma 6.2.14]{CurtainZwart} and \cite[Corollary 6.2.21]{CurtainZwart}.


Conditions similar to the ones given in Lemma~\ref{lem:Op-Equivalence}~\ref{B:G*_surjective}--\ref{B:Tmop} characterize \emph{exactly controllable/observable} linear systems~$\Sigma(\fA,\fB,\fC,\fD)$ of the form
\eqref{LinearSystem}, see~\cite[\S6]{CurtainZwart}:
The equivalence in each ``row" statement follows from~\cite[Theorem 6.2.6.a]{CurtainZwart} using arguments as in the proof of~\cite[Theorem 6.2.27]{CurtainZwart}.

\bigskip
Under the assumption~\eqref{span:infinity}, the linear system~\eqref{LinearSystem} with $\Sigma(\bJ^*,\sqrt\bR,-,-)$ is exactly controllable (see \cite[Theorem 6.2.6.a.iv]{CurtainZwart}).
Then, due to~\cite[Theorem 6.2.27]{CurtainZwart}, the identity~\eqref{span:m} follows. 
For earlier results, see~\cite{Fuhrmann} (mentioned in the proof) and~\cite{CurtainPritchard}.

The equivalence of (i)--(iv) in both Lemma~\ref{lemmai} and Corollary~\ref{cor:Operators:HC_equivalences} is related to~\cite[Lemma 6.2.5]{CurtainZwart}: For example, choose $\bA=\bJ-\bR$, $\bB=\sqrt\bR$ and $\bF=\sqrt\bR$ such that $\bA+\bB\bF=\bJ$.

%
%

\makered{
\section{Deferred proofs}
\label{sec:Proof_of_part_c}

\begin{proof}[Proof of Lemma~\ref{lem1}(c)] 
To derive the uniform estimate \eqref{Pn-decay} we combine a short-term decay estimate for the initial phase $[0,\frac{\tau}{|\bn|}]$ (that shrinks w.r.t.\ $|\bn|$) with the long-term decay estimate \eqref{unif-decay} for the remaining time interval $[\frac{\tau}{|\bn|},\tau]$. Hence, we structure this proof part in several steps.

In the following, $x$ represents an element $f_\bn\in L^2(\sphere^1)=:H$, and $\sphere_H \coleq \{ x\in H : \norm{x}_H =1\}$ --- in analogy to the notation in \S\ref{sec:HC}-\S\ref{sec_decay}.

\noindent
\underline{Step 1 (preliminaries):} 
For $\bn \in \Z^2 \setminus\{0\}$, consider system~\eqref{lorentz_mode_n} with $\bC_\bn =\bR -\bJ_\bn$ satisfying~$\norm{\bR}=1$ and $\|\mJ_\bn\| = |\bn|$. 
By \eqref{est1} and Lemma \ref{lemmaiii} there exist constants $\kappa_1, \kappa_3>0$ such that
\[ \mR + \mJ_\bn \mR \mJ_\bn^* \ge \kappa_1 \mI,\quad \mR + \mC_\bn^* \mR \mC_\bn \ge \kappa_3 \mI\qquad \forall \bn \in \Z^2\setminus\{0\}. \]
Note that $\kappa_1,\kappa_3$ can be chosen independently of $\bn$ due to the identification of $\bJ_\bn$ with $|\bn|\bJ_{(1,0)}$, i.e.,
\[ \bR + |\bn|^2 \bJ_{(1,0)} \bR \bJ_{(1,0)}^* \ge \bR + \bJ_{(1,0)} \bR \bJ^*_{(1,0)} \ge \kappa_1 \bI\,, \]
and similarly for $\kappa_3$.

For $\|x\|=1$ we set, as in the proof of Proposition \ref{prop:short-t-decay:upper}, $\lambda_x \coleq \langle x, \bR x \rangle = \| \sqrt \bR x \|^2 \ge 0$ and $\mu^\bn_x \coleq \langle x, \bC_\bn^*\bR\bC_\bn x \rangle = \| \sqrt \bR \bC_\bn x \|^2 \ge 0$, which satisfy $\lambda_x + \mu^\bn_x \ge \kappa_3 > 0$.

Fixing $\delta \coleq \min(\kappa_1/5, \kappa_3/2)$ we have
\[ \mu^\bn_\delta \coleq \inf_{\substack{x \in \sphere_H\\ \lambda_x \le \delta}} \mu^\bn_x \ge \kappa_3 - \delta \ge \delta > 0\qquad \forall \bn \in \Z^2 \setminus\{0\}\,, \]
and analogously
\begin{equation}\label{c_eq2}
\inf_{\substack{x \in \sphere_H\\ \lambda_x \le \delta}} \| \sqrt \bR \bJ_\bn x \| \ge \sqrt{\kappa_1 - \delta} \ge 2 \sqrt{\delta} > 0 \qquad \forall \bn \in \Z^2 \setminus\{0\}.
\end{equation}

For $\lambda_x \le \delta$ we obtain
\[ \| \sqrt \bR \bJ_\bn x \| - \sqrt \delta \le \| \sqrt \bR \bC_\bn x \| \le \| \sqrt \bR \bJ_\bn x \| + \sqrt \delta \]
and, by taking the infimum,
\begin{equation}\label{c_eq4}
|\bn| \inf_{\substack{x \in \sphere_H\\\lambda_x \le \delta}} \| \sqrt \bR \bJ_{(1,0)} x \| - \sqrt \delta \le \sqrt{\mu^\bn_\delta} \le |\bn| \inf_{\substack{x \in \sphere_H\\\lambda_x \le \delta}} \| \sqrt \bR \bJ_{(1,0)} x \| + \sqrt{\delta}
\end{equation}
which, using~\eqref{c_eq2}, gives
\begin{equation} \label{mu_delta_n} 
 (2 |\bn| - 1 ) \sqrt \delta \le \sqrt{\mu^\bn_\delta} \le |\bn| + \sqrt \delta, 
\end{equation}
i.e., $\mu^\bn_\delta \sim |\bn|^2$.

The following two steps are refinements of the upper bound derived in Proposition~\ref{prop:short-t-decay:upper}:
We now set $g_\bn(x;t) \coleq \norm{\bP_\bn(t)x}^2 - 1$ for $\norm{x} = 1$.
To improve the estimate on~$g_\bn(x;t)$ in \eqref{est:g:C0:tilde} with $\mU_j =\mU^\bn_j$ and $\mC =\mC_\bn$, we use $\norm{\bR}=1$ and $\|\mJ_\bn\| = |\bn|$ to deduce $\|\mC_\bn\| \le 1 + |\bn| \le 2 |\bn|$ and, using~\eqref{eq1}, that
\begin{equation}
\label{c_eq3}
\| \mU^\bn_j\| \le 2 \| \mC_\bn\|^{j-1} 2^{j-1} \le 2 ( 4 |\bn|)^{j-1} \qquad\text{for } \bn \in \Z^2\setminus\{0\}.
\end{equation}

\noindent
\underline{Step 2 (estimate of $g_\bn(x;t)$ for $\lambda_x>\delta$):}

Using $\langle x, \mU^\bn_1 x \rangle=-2\lambda_x$ and \eqref{c_eq3} we have
\begin{equation}\label{c_eq5}
\begin{aligned}
g_\bn(x;t) &= - 2 \lambda_x t + \sum_{j=2}^\infty \frac{t^j}{j!} \langle x, \mU^\bn_j x \rangle \\
&\le - 2 \lambda_x t + \frac{e^{4|\bn|t}-1-4|\bn|t}{2|\bn|} \le -\lambda_x t \quad\text{for }|\bn|t \le \tau_1,
\end{aligned}
\end{equation}
where $\tau_1$ is chosen such that
\[ \delta = \delta_1(\tau_1) \coleq \frac{e^{4\tau_1} - 1 - 4 \tau_1}{2\tau_1} \]
and we have used $\delta < \lambda_x$; note that $\delta_1(0)=0$, $\delta_1$ is strictly monotone increasing and unbounded.

Next, for $t$ such that
\begin{align*}
    |\bn|t &\le \tau_2(\delta) \coleq \frac{\sqrt{12 \delta}}{\inf\limits_{\substack{x \in \sphere_H\\\sqrt{\lambda_x} \le \sqrt{\delta}}} \| \sqrt \bR \bJ_{(1,0)} x\| + \sqrt{\delta}} \\
    & \le \frac{\sqrt{12\delta}}{\inf\limits_{\substack{x \in \sphere_H\\\sqrt{\lambda_x} \le \sqrt{\delta}}}\|\sqrt \bR \bJ_{(1,0)} x \| + \frac{\sqrt{\delta}}{|\bn|}} \le \frac{\sqrt{12\delta}\,|\bn|}{\sqrt{\mu^\bn_\delta}}\,,
\end{align*}
where we used \eqref{c_eq4}, 
we obtain
\begin{equation}\label{c_eq6}
t \le \sqrt{\frac{12\delta}{\mu^\bn_\delta}} < \sqrt{\frac{12\lambda_x}{\mu^\bn_\delta}}
\end{equation}
for each $\bn$-mode separately. Combined with \eqref{c_eq5} this leads to
\begin{equation} \label{est_gn_lambda_greater_delta} 
 g_\bn(x;t) \le - \lambda_x t \le - \frac{\mu^\bn_\delta}{12}t^3 \qquad \text{for } x \in \sphere_H \text{ such that } \lambda_x>\delta \text{ and } |\bn|t \le \min(\tau_1, \tau_2). 
\end{equation}

\noindent
\underline{Step 3 (estimate of $g_\bn(x;t)$ for $\lambda_x\le\delta$):} 

From \eqref{est:g:C0:tilde} and $\|\bC_\bn\|\leq 2|\bn|$ we see that
\begin{align}
    \nonumber g(x;t) & \le \frac{1}{12} \langle \bC_\bn x, \mU^\bn_1 \bC_\bn x \rangle t^3 + \sum_{j=4}^\infty \frac{t^j}{j!} \sum_{k=1}^{j-2} \binom{j-1}k \Delta_{j,k}^{(1)} \langle (-\bC_\bn)^k x, \mU^\bn_1 (-\bC_\bn)^{j-k-1} x \rangle \\
    & \le - \frac{\mu^\bn_\delta}{6} t^3 + \sum_{j=4}^\infty t^j \frac{2^{j-1}}{j!} 2(2|\bn|)^{j-1} = - \frac{\mu^\bn_\delta}{6} t^3 + \frac{1}{2|\bn|} \sum_{j=4}^\infty \frac{(4 |\bn| t)^j}{j!} \label{c_eq7}\,,
\end{align}
where we have used the easily verified estimate
\[ \sum_{k=1}^{j-2} \binom{j-1}{k} \Delta^{(1)}_{j,k} \le 2^{j-1}. \]
Then, we use that
\begin{equation}\label{c_eq8}
\frac{1}{2|\bn|} \sum_{j=4}^\infty \frac{(4 |\bn| t)^j}{j!} = \frac{e^{4|\bn|t} - 1 - 4 |\bn|t - 8 |\bn|^2 t^2 - \frac{32}{3} |\bn|^3 t^3}{2|\bn|} \le \frac{\mu^\bn_\delta}{12}t^3 \qquad \text{for }|\bn|t \le \tau_3,
\end{equation}
where $\tau_3$ is chosen, using~\eqref{c_eq2} and~\eqref{c_eq4}, such that
\begin{multline*}
    \frac{e^{4|\bn|t} - 1 - 4 |\bn|t - 8 |\bn|^2 t^2 - \frac{32}{3} |\bn|^3 t^3}{2|\bn|^3 t^3} \le \frac{e^{4 \tau_3} - 1 - 4 \tau_3 - 8 \tau_3^2 - \frac{32}{3} \tau_3^3}{2 \tau_3^3} \\ \eqcol \delta_3(\tau_3) = \frac{\delta}{12} \le \frac{1}{12} \left( \inf_{\substack{x \in \sphere_H\\\lambda_x \le \delta}} \| \sqrt \bR \bJ_{(1,0)}\| - \frac{\sqrt \delta}{|\bn|}\right)^2 \le \frac{\mu_\delta^\bn}{12|\bn|^2}.
\end{multline*}
Together, \eqref{c_eq7} and \eqref{c_eq8} result in
\begin{equation} \label{est_gn_lambda_lesser_delta} 
 g_\bn(x;t) \le - \frac{\mu^\bn_\delta}{12}t^3 \qquad \text{for } x \in \sphere_H \text{ such that } \lambda_x\le\delta \text{ and } |\bn|t \le \tau_3. 
\end{equation}

In total, the estimates~\eqref{est_gn_lambda_greater_delta} and~\eqref{est_gn_lambda_lesser_delta} imply that, for arbitrary $x \in \sphere_H$,
\[ g_\bn(x;t) \le - \frac{\mu^\bn_\delta}{12}t^3 \quad \text{for }|\bn|t \le \tau \coleq \min(\tau_1, \tau_2, \tau_3, 1) \]
which, using~\eqref{mu_delta_n}, yields that 
\begin{equation}\label{c_eq9}
g_\bn(x;t) \le - \frac{(2|\bn|-1)^2\delta}{12}t^3 \le - \frac{|\bn|^2 \delta}{12} t^3 \quad \text{for }|\bn|t \le \tau.
\end{equation}

\noindent
\underline{Step 4 (coupling with exponential decay):} 

Following~\eqref{c_eq9}, our reference bound for $|\bn|=1$ is
\begin{equation} \label{reference_curve}  
 \| \bP_{(1,0)}(t) \| \le 1 - c_1 t^3 \quad\text{ on }[0,\tau] \quad\text{ with }c_1 \coleq \frac{\delta}{12} > 0. 
\end{equation}
To derive a uniform estimate for $\|\bP_\bn(t)\|=g_\bn(x;t)+1$ for $|\bn|> 1$, consider estimate~\eqref{c_eq9} for $0\leq t \leq \tau/|\bn|$: The endpoints $(\frac{\tau}{|\bn|}, 1- \frac{\delta \tau^3}{12|\bn|})$ of the final upper bound in~\eqref{c_eq9} are all below the reference curve $1-c_1 t^3$, see the cubic curves in Figure~\ref{fig:sketch_of_proof}. 

Next, due to~\eqref{c_eq9} and the semi-dissipativity of~\eqref{lorentz_mode_n}, the estimate
\begin{equation}\label{Pn-est}
  \|\bP_\bn(t)\| \leq 1-\frac{\delta \tau^3}{12|\bn|}
\end{equation}
holds for all $t\geq \tau/|\bn|$, but this horizontal line eventually crosses the cubic curve $1-c_1 t^3$, see Figure~\ref{fig:sketch_of_proof}.
Hence, a constant estimate until $t=\tau$ will not suffice.

Therefore, the estimate \eqref{Pn-est} is combined with the exponential decay estimate~\eqref{unif-decay} to deduce that
\begin{equation}\label{c_eq10}
\| \bP_\bn(t) \| \le \left(1-\frac{\delta \tau^3}{12|\bn|}\right) \min \left[1, \sqrt{1 + \frac{1}{|\bn|-1/2}}e^{-\lambda_0(t-\frac{\tau}{|\bn|})}\right],
 \qquad \text{for } t\geq \tau/|\bn|;
\end{equation}
where the factor $1$ applies on the interval $[\tau/|\bn|, t_\bn]$ with
\[ t_\bn = \frac{\tau}{|\bn|} + \frac{\ln \left( 1 + \frac{1}{|\bn|-1/2}\right)}{2\lambda_0} \le \frac{\tau}{|\bn|} + \frac{1}{2 \lambda_0} \frac{1}{|\bn|-1/2} \le \left(\tau + \frac{1}{\lambda_0}\right)\frac{1}{|\bn|} \eqcol \tilde t_\bn. \]
At this endpoint $t_\bn$, the estimate \eqref{Pn-est} still has to be below the reference curve, possibly with reduced $c$. Let
\[ c_2 \coleq \frac{c_1}{\left( 1 + \frac{1}{\lambda_0 \tau} \right)^3}. \]
Then, for $t\in[\tau/|\bn|,t_\bn]$, the estimate $\| \bP_\bn(t) \| \le 1 - c_2 t^3$ holds, 
due to~\eqref{c_eq10} and 
\[ 
 1-\frac{\delta \tau^3}{12|\bn|}
= 1 - c_1 \frac{\tau^3}{|\bn|}
\le 1 - c_2 \tilde{ t\mkern 0mu}_{\bn}^3 
\le 1 - c_2 t_\bn^3\qquad \forall \bn \in \Z^2 \setminus\{0\}. 
\] 
Again, the comparison of the reference curve in~\eqref{reference_curve} and the upper bound in~\eqref{c_eq10} is only done at the endpoint $t=t_\bn$ of the curve segments, since the curve segment of the reference curve is concave, and the other upper bound is a combination of a concave, a horizontal and a convex curve segment, respectively, see Figure~\ref{fig:sketch_of_proof}.

At the final endpoint $\tau$, all estimates still have to be below the new reference curve $1 - c_2 \tau^3$, possibly with $c$ reduced further: We only need to estimate \eqref{c_eq10} for such $|\bn|\ge r \ge 1$ ($r$ not necessarily integer) for which the exponential term in \eqref{c_eq10} is already strictly smaller than 1 at $t=\tau$ (or equivalently $t_\bn<\tau$); otherwise we are already done by the previous step.
So, we have
\begin{equation}\label{c_eq11} \| \bP_\bn(\tau) \| \le \left(1-\frac{\delta \tau^3}{12|\bn|}\right) \sqrt{ 1 + \frac{1}{|\bn| - 1/2}} e^{-\lambda_0 \frac{|\bn|-1}{|\bn|}\tau} \le 1 - c_3 \tau^3 \end{equation}
for the unique $c_3$ such that the second inequality in \eqref{c_eq11} is an equality with $|\bn| = r$.

Concluding, \eqref{Pn-decay} holds with $c \coleq \min(c_2, c_3)$.
\end{proof}
}


\end{document}

\begin{remark}
Special cases of the above theorem were pointed out to us by Laurent Miclo: 
In \S1 of \cite{MiMo13} the short time decay behavior of the Goldstein-Taylor model (a linear transport equation with relaxation term) was determined as $1-\frac{t^3}3+o(t^3)$. 
Actually, this model is a PDE. 
But since it is considered on a torus in $x$, each of its spatial Fourier modes (except of the 0-mode) satisfies a conservative-dissipative ODE system with hypocoercivity index 1 (see \cite{AAC16} for details of this modal decomposition). 
Hence, mode by mode, the result from \cite{MiMo13} is an example for Theorem~\ref{th:HC-decay}. 
For closely related BGK-models with hypocoercivity index 2 and 3 we refer to \cite{AAC18}.

In \cite{GaMi13} the short time decay behavior of a kinetic Fokker--Planck equation on the torus in $x$ was computed as $1-\frac{t^3}{12}+o(t^3)$. 
Again, in Fourier space and by using a Hermite function basis in velocity, this model can be written as an (infinite dimensional) conservative-dissipative system with hypocoercivity index 1 (see \S2.1 of \cite{GaMi13}). 
In that paper it was also mentioned that the decay exponent in~\eqref{short-t-decay} can be seen as some ``order of hypocoercivity'' of the generator.

For degenerate Fokker--Planck equations, the hypocoercivity index can also be related to the regularization rate for short times: 
In \cite[Theorem A.12]{Vi09} the regularization of initial data from a weighted $L^2$ space into a weighted $H^1$ space is derived, and in \cite[Theorem A.15]{Vi09}, \cite[Theorem 4.8]{ArEr14} it is generalized to entropy functionals and their corresponding Fisher informations. 
In all these cases the regularization rate is $t^{-a}$ with $a=2\mHC+1$ (somewhat related to Theorem~\ref{th:HC-decay} above).
\end{remark}